\documentclass{amsproc}

\usepackage{physics}
\usepackage{xcolor}
\usepackage{hyperref}
\hypersetup{colorlinks,linkcolor=blue,citecolor=cyan}
\usepackage{tikz, tikz-cd, tikz-3dplot}
\usetikzlibrary{decorations.markings, arrows.meta}
\tikzset{
    partial ellipse/.style args={#1:#2:#3}{
        insert path={+ (#1:#3) arc (#1:#2:#3)}
    }
}
\usepackage{float}
\setcounter{section}{-1}
\setcounter{MaxMatrixCols}{11}

\allowdisplaybreaks

\newtheorem{thm}{Theorem}[section]
\newtheorem{mainthm}{Theorem}[section]

\newtheorem{cor}[thm]{Corollary}
\newtheorem{lem}[thm]{Lemma}
\newtheorem{prop}[thm]{Proposition}

\theoremstyle{definition}
\newtheorem{defn}[thm]{Definition}
\newtheorem{eg}[thm]{Example}

\newtheorem{qn}[thm]{Question}
\newtheorem{rmk}[thm]{Remark}

\numberwithin{equation}{section}

\DeclareMathOperator{\Sk}{\mathrm{Sk}} 
\DeclareMathOperator{\SkAlg}{\mathrm{SkAlg}} 
\newcommand{\qbin}[2]{\begin{bmatrix}{#1}\\ {#2}\end{bmatrix}}

\newcommand\dualY[0]{\rotatebox[origin=c]{180}{$Y$}}

\begin{document}

\title{Skeins, $q$-series, and modularity}

\author{Sunghyuk Park}
\address{Department of Mathematics \& Center of Mathematical Sciences and Applications (CMSA), Harvard University, Cambridge MA 02138}
\email{sunghyukpark@math.harvard.edu}

\subjclass[2020]{Primary 57K31, 57K16; Secondary 33D80.}

\date{}

\begin{abstract}
We study BPS $q$-series associated to 3-manifolds decorated by a line defect along an embedded link. 
We prove that these $q$-series depend only on the class of the link in the skein module, thereby defining a homomorphism from the skein module to the space of $q$-series. 
The image of this homomorphism is conjectured to be holomorphically quantum modular, which suggests a new approach to Langlands duality for skein modules through $q$-series. 
\end{abstract}

\maketitle

\tableofcontents


\section{Introduction}
A notable recent advance in quantum topology is the emergence of new connections between 3-manifolds and (quantum) modular forms. 
The \emph{BPS $q$-series} (a.k.a. $\widehat{Z}$-invariants or homological blocks), introduced by Gukov-Pei-Putrov-Vafa \cite{GukovPeiPutrovVafa} and further developed by Gukov-Manolescu \cite{GukovManolescu}, play a defining role in this development, motivated by the program of categorifying quantum 3-manifold invariants. 

Quantum modularity, in its holomorphic form \cite{GaroufalidisZagier}, is a generalization of modularity exhibited by certain holomorphic functions on the upper and lower half-planes. 
Roughly speaking, we say that a matrix of holomorphic functions on the upper and lower half-planes is a matrix-valued \emph{holomorphic quantum modular form}, if its failure to be modular under $\gamma = \left(\begin{smallmatrix} a & b \\ c & d\end{smallmatrix}\right) \in \mathrm{SL}_2(\mathbb{Z})$ (i.e., the associated cocycle) can be holomorphically extended from $\mathbb{C}\setminus \mathbb{R}$ to the cut plane $\mathbb{C}_\gamma := \mathbb{C} \setminus \frac{1}{c}\mathbb{R}_{\leq -d}$ if $c\neq 0$. 
Guided by existing examples \cite{CCKPS_3d-modularity-revisited} and structural properties of the BPS $q$-series, one expects that for a fixed 3-manifold, the collection of BPS $q$-series obtained by inserting all possible line defects should, after choosing an appropriate basis of defects, extend into such a matrix-valued holomorphic quantum modular form. 

Langlands duality for skein modules \cite{Jordan}, on the other hand, predicts a correspondence between skein modules associated to a reductive group $G$ and those for its Langlands dual group ${}^{L}G$. 
At the most basic level, it conjectures that, for any closed 3-manifold $Y$, its $G$- and ${}^{L}G$-skein modules have the same dimension. 
At a deeper level, one expects a $\mathbb{C}$-linear isomorphism 
\[
\Sk^{G}_{q}(Y) \cong \Sk^{{}^{L}G}_{{}^{L}q}(Y)
\]
between the two skein modules that varies holomorphically in $\tau \in \mathbb{C} \setminus \mathbb{R}$, where $q = e^{2\pi i \tau}$ and ${}^{L}q = e^{-2\pi i \frac{1}{n_G \tau}}$, with $n_G \in \{1, 2, 3\}$ the lacing number of $G$. 
As discussed in \cite{Jordan}, the skein modules are expected to describe the state spaces of Kapustin-Witten 4d TQFT \cite{KapustinWitten}, and, in that context, the Langlands duality for skein modules arises from the $S$-duality on the Kapustin-Witten theory applied to state spaces. 

Motivated by these ideas, the purpose of this article is twofold:
\begin{enumerate}
\item To extend the BPS $q$-series to skein modules, by allowing insertion of arbitrary line defects. 
\item To connect the Langlands duality for skein modules to quantum modularity of the BPS $q$-series. 
\end{enumerate}
By doing so, we hope to shed light on the study of Langlands duality for skein modules, beyond the numerical comparison of dimensions.

\subsection*{Summary of main results}
Let $K \subset S^3$ be a Seifert-framed knot and $L \subset S^3 \setminus K$ be a framed link in the complement of $K$. 
We color $K$ by $V_n$, the $n$-dimensional irreducible representation of $U_q(\mathfrak{sl}_2)$, and each component $L_i$ of $L$ by $V_{m_i}$, for some fixed $m_i \geq 1$. 
Let $J_{(K,n),(L,\vec{m})}(q)$ denote the (unnormalized) colored Jones polynomial of the framed, colored link $K \sqcup L$. 
Then we have the following analog of the Melvin-Morton-Rozansky (MMR) expansion for $K$ in the presence of the line defect $L \subset S^3 \setminus K$:
\begin{mainthm}\label{mainthm:MMR}
In the large-color asymptotic where $\hbar \rightarrow 0$, $n\rightarrow \infty$ while $u := n\hbar$ is fixed, 
\[
\frac{1}{[n]} J_{(K,n),(L,\vec{m})}(q) \bigg\vert_{q=e^{\hbar}} \quad\overset{\substack{\hbar \rightarrow 0 \\ n \rightarrow \infty \\ n\hbar = u \text{ fixed}}}{\sim}\quad
\frac{\prod_{i}\chi_{m_i}(x^{\mathrm{lk}(L_i, K)})}{\Delta_K(x)} + \sum_{d\geq 1} \frac{P_d(x)}{\Delta_K(x)^{2d+1}} \hbar^d
,
\]
where $x := e^u$, 
$\chi_m(z) := \frac{z^{\frac{m}{2}}-z^{-\frac{m}{2}}}{z^{\frac12}-z^{-\frac12}}$, 
$\Delta_K(x)$ denotes the Alexander polynomial of $K$, and $P_d(x) \in \mathbb{Q}[x^{\pm \frac12}]$ are some polynomials determined by $K$ and $L$.\footnote{
The right-hand side should be understood as a power series in
$\mathbb{Q}[[u]][[\hbar]]$
or in
$x^{\bullet}\mathbb{Q}[x^{\pm 1}][[1-x]][[\hbar]]$, 
where $\bullet \in \frac{1}{2}\mathbb{Z}/\mathbb{Z}$ is $0$ (resp., $\frac{1}{2}$) if $\mathrm{lk}(L,K)$ is even (resp., odd). 
} 
\end{mainthm}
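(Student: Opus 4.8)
\emph{Proof strategy.} The classical case $L=\emptyset$ is the Melvin-Morton-Rozansky theorem, which we take as known, so it suffices to establish the relative statement; the plan is a Rozansky-type loop expansion of the universal $U_q(\mathfrak{sl}_2)$ invariant. Conceptually, the large-color limit localizes the $V_n$-colored component $K$ onto the two abelian $\mathrm{SL}_2(\mathbb{C})$-holonomies of its meridian, $\mathrm{diag}(x^{\pm 1/2})$, weighted by the fluctuation factor $\Delta_K(x)^{-1}$. A component $L_i$ in the complement $S^3\setminus K$ then sees only the image of its class in $H_1(S^3\setminus K)\cong\mathbb{Z}$, i.e. $\mathrm{lk}(L_i,K)$ times the meridian, so its holonomy is $\mathrm{diag}(x^{\pm\mathrm{lk}(L_i,K)/2})$; coloring $L_i$ by $V_{m_i}$ and taking the quantum trace returns the character $\chi_{m_i}(x^{\mathrm{lk}(L_i,K)})$. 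Since these abelian holonomies commute, the $L_i$-contributions factor over $i$ and multiply $\Delta_K(x)^{-1}$, giving the stated leading term, while the $\hbar^d$-corrections come from the non-abelian and quantum fluctuations.

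To carry this out, I would first present $K$ as the trace closure of a $(1,1)$-tangle $T_K$ and isotope $L$ so that it appears as closed components braided with $T_K$ (the Seifert framing on $K$ being what makes the framing anomaly trivial, so that the $V_n$-trace behaves as in classical MMR). Using the standard factorization $R=q^{H\otimes H/2}\cdot\sum_{k\geq 0}c_k\,E^k\otimes F^k$, with $c_k$ of order $\hbar^k$ after $q=e^{\hbar}$, the universal invariant of $K\cup L$ with the $L_i$-tensor factors evaluated in $V_{m_i}$ and traced out is a concrete element $Z_{K,L,\vec{m}}\in\widehat{U_q(\mathfrak{sl}_2)}$, and $\tfrac{1}{[n]}J_{(K,n),(L,\vec{m})}(e^{\hbar})=\tfrac{1}{[n]}\operatorname{tr}^{V_n}_q(Z_{K,L,\vec{m}})$. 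I would then expand $Z_{K,L,\vec{m}}$ by the number of generators $E,F$ it contains. The term with the minimal number along $K$ and none along any $L_i$ --- all crossings of the $L_i$ with $K$ reduced to their Cartan part $q^{H\otimes H/2}$ --- yields, after the $V_n$-trace, exactly $\Delta_K(x)^{-1}\prod_i\chi_{m_i}(x^{\mathrm{lk}(L_i,K)})$: the $K$-part is the classical MMR identification of the minimal-loop piece with $\Delta_K(x)^{-1}$, and the $V_n$-trace leaves each $L_i$-strand evaluated at the $\mathrm{lk}(L_i,K)$-th power of the abelian meridian holonomy $\mathrm{diag}(x^{\pm 1/2})$, whose quantum trace in $V_{m_i}$ is $\chi_{m_i}(x^{\mathrm{lk}(L_i,K)})$. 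Every other term carries an extra $E$ or $F$: along an $L_i$ this forces a positive power of $\hbar$, since these generators act with an $\hbar$-small coefficient on the fixed-dimensional $V_{m_i}$; along $K$ each extra generator-pair is resummed against the large $V_n$-trace as in Rozansky's argument. Since the $L_i$ contribute only finitely many insertions, whose Cartan parts act as benign twists along $K$ (trivial as $\hbar\to 0$ and creating no new poles), these terms contribute $P_d(x)/\Delta_K(x)^{2d+1}$ with $P_d\in\mathbb{Q}[x^{\pm 1/2}]$ and no denominators beyond powers of $\Delta_K$. The half-integer powers, and the dependence of $\bullet$ on the parity of $\mathrm{lk}(L,K)$, are read off from the exponents in the $q^{H\otimes H/2}$-factors at the crossings of the $L_i$ with $K$; the footnote then follows because $\Delta_K(1)=\pm 1$ makes $\Delta_K(x)$ a unit in $\mathbb{Q}[x^{\pm 1}][[1-x]]$, so each summand lies in $x^{\bullet}\mathbb{Q}[x^{\pm 1}][[1-x]]$, and $x=e^u$ sends this into $\mathbb{Q}[[u]][[\hbar]]$.

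The main obstacle is the rigorous form of the localization in the middle step: that at leading order only the homological datum $\mathrm{lk}(L_i,K)$ of each $L_i$ survives --- so self- and mutual knotting or linking of the $L_i$ drops out --- and that the contributions genuinely multiply across the components and against $\Delta_K(x)^{-1}$, with no extra poles created. If the universal-invariant framework is already in place this is careful bookkeeping in the $\hbar$-expansion; otherwise it amounts to re-running the Bar-Natan-Garoufalidis weight-system proof of MMR with the extra fixed-colored components present and controlling their contribution to each loop order --- the step I expect to be the most delicate.
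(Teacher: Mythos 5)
Your strategy is essentially the one the paper follows --- a Rozansky-type loop expansion in which the leading order abelianizes the holonomy of $K$ (so that each $L_i$ sees only $\mathrm{lk}(L_i,K)$ and contributes a character) and the higher orders are controlled by powers of $\Delta_K$ --- but, as you yourself flag, the proposal stops exactly where the work begins, and the deferred step is the entire technical content of the theorem. Concretely, the paper (i) replaces the $V_n$-coloring of $K$ by the Verma module $V_\infty(x)$ and proves a truncation lemma (any state carrying a spin $\geq n$ on $K$ contributes $O((1-x)^n)$), which is what rigorously justifies the existence of the limit as an element of $\mathbb{Q}[[u]][[\hbar]]$ --- your sketch never addresses convergence; and (ii) proves the denominator bound $\Delta_K(x)^{2d+1}$ at order $\hbar^d$ by writing each $R$-matrix as a degree-$\leq 2d$ differential operator applied to a ``free'' parametrized $\mathsf{R}$-matrix, so that the state sum over $K$ becomes a random walk with generating function $\mathsf{P}/\det(I-\mathcal{B})^{1+\delta}$, where $\det(I-\mathcal{B})$ specializes to $\Delta_K(x)$. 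The genuinely new difficulty relative to $L=\emptyset$, which your sketch does not engage with, is that the $L$--$K$ interactions turn the closed random walk into one with $\delta$ open paths, each contributing an extra factor $\mathrm{adj}(I-\mathcal{B})_{a,a'}/\det(I-\mathcal{B})$; one must then check that each such path carries a compensating factor of $\hbar$ (the statement in the paper's free-state-sum lemma that the numerator has $\gamma$-degree $\geq\delta$), or else the exponent $2d+1$ would fail. Asserting ``no denominators beyond powers of $\Delta_K$'' is not enough --- the issue is the \emph{power} of $\Delta_K$ per order of $\hbar$.

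There is also one substantive inaccuracy in the sketch of the leading term. It is not true that every extra $E$ or $F$ insertion at a crossing involving an $L_i$ costs a power of $\hbar$: in $R=q^{H\otimes H/2}\sum_k c_k\,E^k\otimes F^k$ the coefficient $c_k\sim\hbar^k$ is cancelled whenever the lowering operators land on the Verma-colored strand, since $F v_i=[\lambda_x-i]v_{i+1}$ with $[\lambda_x-i]\sim(x^{1/2}-x^{-1/2})/\hbar$; such terms are $O(1)$ in $\hbar$ (they are only small in $(1-x)$), so spin can leave $L_i$ for $K$ at no $\hbar$-cost. What saves the claim is that each $L_i$ is closed, so any spin transferred off of $L_i$ must be returned to it, and every return (where the lowering operators act on the finite-dimensional $V_{m_i}$) genuinely costs $\hbar$. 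This balancing argument is the actual content of the paper's leading-term lemma and would need to be made explicit in your approach as well.
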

\begin{rmk}
Concretely, the ``large-color asymptotic'' above means the following. 
Consider the $\hbar$-expansion of the colored Jones polynomials
\[
\frac{1}{[n]} J_{(K,n),(L,\vec{m})}(q) \bigg\vert_{q=e^{\hbar}} = \sum_{d\geq 0} a_d(n)\,\hbar^d.
\]
Then, part of the claim is that, for each $d$, the coefficient $a_d(n)$ is actually a polynomial in $n$ of degree $\leq d$ (at least for large enough $n$), 
\[
a_d(n) = \sum_{0\leq j\leq d} a_{d,j} n^j,\quad a_{d,j} \in \mathbb{Q},
\]
so that we can rewrite the above $\hbar$-series into a double series in $\hbar$ and $u := n\hbar$
\[
\sum_{0\leq j\leq d} a_{d,j} n^j \hbar^d = 
\sum_{d' \geq 0} \qty( \sum_{j\geq 0}a_{d'+j,j} u^j ) \hbar^{d'}.
\]
The rest of the claim is that the coefficient of $\hbar^{d'}$ given by the $u$-series
\[
\sum_{j\geq 0}a_{d'+j,j} u^j \in \mathbb{Q}[[u]]
\]
is actually the power series expansion of 
\[
\frac{\prod_{i}\chi_{m_i}(x^{\mathrm{lk}(L_i, K)})}{\Delta_K(x)} \bigg\vert_{x=e^u}
\]
when $d'=0$, and more generally for any $d'$, some rational function in $x$ (uniquely determined by its $u$-expansion) whose denominator is a power of the Alexander polynomial. 
When $L$ is the empty link, this is nothing but the standard MMR expansion \cite{Rozansky}. 
\end{rmk}

Since the colored Jones polynomials $J_{(K,n),(L,\vec{m})}(q)$ depend only on the class of the colored link $(L,\vec{m})$ in the skein module $\Sk^{\mathrm{SL}_2}_q(S^3 \setminus K)$, the same is true for the MMR expansion on the right-hand side of Theorem \ref{mainthm:MMR}. 
Hence, as a corollary, we obtain a linear map
\[
\mathrm{MMR} : \Sk^{\mathrm{SL}_2}_{q}(S^3 \setminus K) \bigg\vert_{q=e^{\hbar}} \rightarrow \mathbb{Q}[x^{\pm \frac12},\frac{1}{\Delta_K(x)}][[\hbar]].
\]
When $K$ is braid-homogeneous (i.e., can be presented as a closure of a homogeneous braid), or more generally ``nice'' in the sense of \cite{Park-thesis}, we show that the MMR expansion can be resummed into a power series in $q$ and $x$ with \emph{integer} coefficients in a canonical way: 
\begin{mainthm}\label{mainthm:skein-q-series}
Let $K$ be a nice knot. 
For any colored link $(L,\vec{m}) \subset S^3 \setminus K$, the BPS $q$-series
$\widehat{Z}_{S^3 \setminus K, (L,\vec{m})}(x,q)$ is an isotopy invariant of the link $L$ in $S^3 \setminus K$. 
Moreover, it factors through the skein module to give a $\mathbb{Z}[q^{\pm \frac14}]$-linear map
\[
\widehat{Z} : \Sk^{\mathrm{SL}_2}_q(S^3 \setminus K) \rightarrow \mathbb{Z}[q^{\pm \frac14}]((x^{\frac12})). 
\]
\end{mainthm}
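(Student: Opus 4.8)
\emph{Plan of proof.}
Fix a presentation of $K$ as the closure of a homogeneous braid $\beta$ (or, more generally, a ``nice'' diagram in the sense of \cite{Park-thesis}), and draw $L$ as a tangle sitting in the complement of this diagram, with $K$ colored by $V_n$ and each $L_i$ colored by $V_{m_i}$. The starting point is the $R$-matrix state sum computing $J_{(K,n),(L,\vec m)}(q)$ from this diagram. For $L = \emptyset$, the work of \cite{Park-thesis} shows that, after the substitution $x = q^n$, this state sum can be resummed --- crossing by crossing along $\beta$, using that a positive (resp.\ negative) crossing contributes a factor summable as a $q$-geometric series and that the relevant change-of-basis coefficients are quantum binomials --- into the two-variable series $\widehat{Z}_{S^3\setminus K}(x,q)\in \mathbb{Z}[q^{\pm\frac14}]((x^{\frac12}))$. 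The plan is to carry out exactly this resummation while keeping track of the extra factors produced by the crossings involving $L$: crossings of $L$ with itself contribute entries of $V_{m_i}\otimes V_{m_j}$ $R$-matrices, which lie in $\mathbb{Z}[q^{\pm\frac14}]$ and are untouched by the resummation over $n$; a crossing of a strand of $L_i$ with a strand of $K$ contributes an entry of the $V_n\otimes V_{m_i}$ $R$-matrix which, under $x=q^n$ and after the resummation, reduces to a Laurent polynomial in $x^{\pm\frac12}$ and $q^{\pm\frac14}$ --- reproducing $\prod_i\chi_{m_i}(x^{\mathrm{lk}(L_i,K)})$ at leading order, in accordance with Theorem \ref{mainthm:MMR}, together with integral higher-order corrections. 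Defining $\widehat{Z}_{S^3\setminus K,(L,\vec m)}(x,q)$ to be the resulting resummed expression, one checks that it converges in $\mathbb{Z}[q^{\pm\frac14}]((x^{\frac12}))$: integrality is inherited from the integrality of the $R$-matrix entries and of quantum binomials, and only finitely many negative powers of $x^{\frac12}$ occur because the ``$L$-part'' of the state sum ranges over a finite set of states.

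At this stage Theorem \ref{mainthm:MMR} serves as a consistency check: the large-color asymptotic expansion of $\widehat{Z}_{S^3\setminus K,(L,\vec m)}(x,q)$ with $x=e^u$ must equal the MMR series, which in particular pins down the semiclassical normalization. Isotopy invariance of $L$ in $S^3\setminus K$ then follows by the usual Reidemeister-move argument, transported through the resummation. The resummed expression is assembled from local pieces --- the resummed braid generators of $K$, and the ordinary $R$-matrix tangles of $L$ --- and a Reidemeister move on $L$ (rel $K$), or an equivalence relating two nice presentations of $K$, becomes a local identity among these pieces. For the moves on $L$ this is literally a quantum Yang--Baxter / Markov-type identity for the $R$-matrix, which holds verbatim after the resummation of the $K$-color because that resummation is $\mathbb{Z}[q^{\pm\frac14}]$-linear in the ``$L$-sector''; the independence of the $K$-presentation is precisely what is established in \cite{Park-thesis}. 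Hence $\widehat{Z}_{S^3\setminus K,(L,\vec m)}(x,q)$ depends only on the isotopy class of $L$ in $S^3\setminus K$.

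To factor through the skein module, recall that $\Sk^{\mathrm{SL}_2}_q(S^3\setminus K)$ is generated over $\mathbb{Z}[q^{\pm\frac14}]$ by isotopy classes of framed ($V_2$-colored) links modulo the Kauffman bracket skein relation and the distant-unknot relation, a $V_m$-colored component being identified with the Jones--Wenzl cable of the $V_2$-colored $(m-1)$-cable. Extend $\widehat{Z}$ by $\mathbb{Z}[q^{\pm\frac14}]$-linearity to formal linear combinations of colored links. The Kauffman skein relation at a crossing of $L$ is precisely the defining relation of the $V_2\otimes V_2$ $R$-matrix; since that crossing lies entirely in the $L$-sector, the relation survives the resummation of the $K$-color, and the same holds for the distant-unknot relation (factor $-(q^{\frac12}+q^{-\frac12})$) and for Jones--Wenzl compatibility. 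Therefore $\widehat{Z}$ descends to the asserted $\mathbb{Z}[q^{\pm\frac14}]$-linear map $\Sk^{\mathrm{SL}_2}_q(S^3\setminus K)\to\mathbb{Z}[q^{\pm\frac14}]((x^{\frac12}))$.

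The step I expect to be the main obstacle is the resummation in the presence of $L$: one must show that the mixed $K$--$L$ crossings, after taking $n\to\infty$ and resumming, still contribute only integral Laurent data with controlled (finitely many) negative $x^{\frac12}$-powers, and --- crucially --- that the resummation genuinely commutes with the local $R$-matrix identities in the $L$-sector, so that isotopy invariance and the skein relations can be verified move by move as above. In other words, the technical heart is to upgrade the informal ``carry the $L$-factors along'' step into the precise statement that the resummation procedure of \cite{Park-thesis} is natural with respect to the tangle operations performed on $L$.
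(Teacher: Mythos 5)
Your plan diverges from the paper's proof at its logical core, and the step you yourself flag as ``the main obstacle'' is a genuine gap rather than a technicality. You propose to establish isotopy invariance and the skein relations by verifying local identities (Yang--Baxter, Reidemeister II, Kauffman) directly on the resummed object. For moves confined to the $L$-sector this is unproblematic, but an isotopy of $L$ in $S^3\setminus K$ necessarily involves mixed moves in which a strand of $L$ passes over or under strands of $K$, and there the intermediate arc belongs to $K$. In the inverted state sum the spin on such an arc may be forced to range over \emph{negative} integers, so the relevant local identity is not the standard $R$-matrix identity at all: the summation range has changed, and the identity does not ``hold verbatim after the resummation.'' $\mathbb{Z}[q^{\pm\frac14}]$-linearity in the $L$-sector does not supply a proof of these inverted local identities, and no other mechanism is offered. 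The same issue undermines the claimed independence of the choice of nice presentation of $K$ once $L$ is present, which is not covered by the $L=\emptyset$ case.

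The paper avoids all of this by a different route. It never verifies moves locally: it shows that the $\hbar$-expansion of the inverted state sum for $K\sqcup L$ coincides with the MMR expansion of Theorem \ref{mainthm:MMR} --- which you relegate to a ``consistency check'' but which is actually the engine of the proof. Since the MMR expansion is extracted from the colored Jones polynomials $J_{(K,n),(L,\vec m)}(q)$, it is automatically an isotopy invariant and factors through the skein relations on $L$; and since each coefficient of the $(x,q)$-series is a Laurent polynomial in $q^{\frac14}$, hence determined by its $\hbar$-expansion, invariance is inherited by the inverted state sum. The remaining work is to match the two expansions, which the paper does by extending the free-state-sum/random-walk machinery: the differential operators $\mathsf{D}^{\mathrm{total}}_d$ are unchanged under inversion, the cofactor $\mathrm{adj}(I-\mathcal{B})_{a,a'}$ is expressed as a signed sum over simple multi-cycle-paths, and a weight-preserving bijection between simple multi-cycle-paths before and after inversion closes the argument. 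That reduction --- invariance via the MMR expansion rather than via local moves --- is the idea missing from your proposal.
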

\begin{rmk}[Grading]
The map $\widehat{Z}$ in Theorem \ref{mainthm:skein-q-series} respects the natural grading on the skein module. 
Recall that the $G$-skein module $\Sk^G_q(Y)$ is graded by $H_1(Y; Z(G)^\vee)$, so $\Sk^{\mathrm{SL}_2}_q(S^3 \setminus K)$ is graded by $H_1(S^3 \setminus K; \mathbb{Z}/2) \cong \mathbb{Z}/2$; the $0$-(resp., $1$-)graded part is called ``even'' (resp., ``odd''). 
The map $\widehat{Z}$ maps the even part to 
$x^{\frac{1}{2}}\mathbb{Z}[q^{\pm \frac14}]((x))$
and the odd part to 
$\mathbb{Z}[q^{\pm \frac14}]((x))$. 
More generally, for any connected and simply connected complex semisimple Lie group $G$ of rank $r$, there should be a map\footnote{
Here, $N$ is the exponent of $P/Q$ where $Q$ and $P$ are the root and weight lattices, $\rho$ is the Weyl vector, $\omega_i$ is the $i$-th fundamental weight, and $(,)$ is the invariant bilinear form on $\mathfrak{h}_{\mathbb{R}}^*$ normalized so that $(\alpha,\alpha) = 2$ for short roots $\alpha$. 
}
\[
\widehat{Z}^G : \Sk_q^{G}(S^3 \setminus K) \rightarrow \mathbb{Z}[q^{\pm \frac{1}{2N}}] ((x_1^{\frac{1}{N}},\cdots,x_r^{\frac{1}{N}})),
\]
which maps the $\beta \in H_1(S^3 \setminus K; Z(G)^\vee) \cong Z(G)^\vee \cong P/Q$-graded part of the skein module to 
\[
\prod_{1\leq i\leq r}x_i^{(\rho+\beta, \omega_i)}\mathbb{Z}[q^{\pm \frac{1}{2N}}]((x_1, \cdots, x_r)). 
\]
\end{rmk}

Furthermore, we show that the linear map $\widehat{Z}$ constructed in Theorem \ref{mainthm:skein-q-series} is compatible with the Laplace transform surgery formula:
\begin{mainthm}\label{mainthm:surgery}
Let $p \in \mathbb{Z}$ be a surgery slope for which the Laplace transform on $\widehat{Z}_{S^3 \setminus K}(x,q)$ converges.\footnote{
This assumption means precisely the following:
Consider the power series expansion $\widehat{Z}_{S^3\setminus K}(x,q) = \sum_{n\geq 0} f_{K,n}(q)\,x^{\frac{1}{2}+n}$. 
The coefficients $f_{K,n}(q) \in \mathbb{Z}[q,q^{-1}]$ will have the minimal $q$-degree that grows quadratically:
\[
\min \deg_q\,f_{K,n}(q) = c_K n^2 + O(n),
\]
for some unique $c_K \in \mathbb{Q}$. 
Then, the Laplace transform converges if $-\frac{1}{p} > - c_K$. 
}
Then, for any choice of grading $\alpha \in H_1(S_p^3(K);\mathbb{Z}/2)$\footnote{This is a non-trivial choice only when $p$ is even, in which case $\alpha \in \mathbb{Z}/2$.}
and an $\alpha$-twisted spin$^c$ structure $\mathfrak{s} \in \mathrm{Spin}^{c}(S^3_p(K), \alpha)$ (see Section \ref{subsec:twisted-spin-c}),
the map $\widehat{Z}$ in Theorem \ref{mainthm:skein-q-series}, composed with the Laplace transform, factors through $\Sk_q^{\mathrm{SL}_2}(S^3_p(K))_\alpha$ to give a $\mathbb{Z}[q^{\pm \frac14}]$-linear map 
\[
\widehat{Z}_{\mathfrak{s}} : \Sk_q^{\mathrm{SL}_2}(S^3_p(K))_\alpha \rightarrow q^{\Delta_\mathfrak{s}}\mathbb{Z}[q^{\pm \frac14}][[q]],
\]
where $\Delta_\mathfrak{s} \in \mathbb{Q}$. 
\end{mainthm}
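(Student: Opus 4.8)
The plan is to build $\widehat Z_{\mathfrak s}$ on $\mathrm{Sk}^{\mathrm{SL}_2}_q(S^3_p(K))_\alpha$ by pulling back the already-constructed map $\widehat Z$ of Theorem~\ref{mainthm:skein-q-series} along the natural surjection coming from Dehn filling, and then checking that the Laplace transform descends. Concretely, let $T\subset S^3_p(K)$ be the core of the surgery solid torus with a neighborhood $\nu(T)$, so that $S^3_p(K)\setminus\nu(T)\cong S^3\setminus K$. Filling in $\nu(T)$ induces a $\mathbb Z[q^{\pm\frac14}]$-linear map $\iota_*:\mathrm{Sk}^{\mathrm{SL}_2}_q(S^3\setminus K)\to\mathrm{Sk}^{\mathrm{SL}_2}_q(S^3_p(K))$, which is graded and, restricted to a fixed $\alpha\in H_1(S^3_p(K);\mathbb Z/2)$, lands in (indeed surjects onto) $\mathrm{Sk}^{\mathrm{SL}_2}_q(S^3_p(K))_\alpha$ after summing over the fibers of $H_1(S^3\setminus K;\mathbb Z/2)\to H_1(S^3_p(K);\mathbb Z/2)$. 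The first step is therefore to make this geometric picture precise: express a skein $L'\subset S^3_p(K)$, after isotoping it off $\nu(T)$, as the image of a skein $L\subset S^3\setminus K$ possibly linked with the core $T$, and encode the residual linking with $T$ as a sum over colorings of $T$ by the $V_m$, using the standard decomposition of the solid-torus skein algebra into the basis $\{V_m\}_{m\ge1}$.

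The second step is the surgery/Laplace-transform computation. On the image of $\widehat Z$, inserting a core colored by $V_m$ multiplies the BPS $q$-series by the character factor $\chi_m(x)$ appearing in Theorem~\ref{mainthm:MMR}; integrality of $\widehat Z$ (Theorem~\ref{mainthm:skein-q-series}) guarantees the resulting expression still lies in $\mathbb Z[q^{\pm\frac14}]((x^{\frac12}))$. The $p$-surgery is then implemented by the Laplace transform $\mathcal L_p$, which replaces the monomial $x^{\frac12+n}$ by $q^{-\frac{(\frac12+n)^2}{p}}$ (up to the usual normalization), summed over the chosen spin$^c$ coset; the hypothesis $-\frac1p>-c_K$ stated in the footnote is exactly what makes $\sum_n f_{K,n}(q)\,q^{-\frac{(\frac12+n)^2}{p}}$ converge $q$-adically, landing in $q^{\Delta_{\mathfrak s}}\mathbb Z[q^{\pm\frac14}][[q]]$ with $\Delta_{\mathfrak s}$ the minimizing quadratic value fixed by $\mathfrak s$. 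Multiplying a series by $\chi_m(x)$ before applying $\mathcal L_p$ only shifts and finitely recombines the coefficients, so convergence is preserved; this shows $\mathcal L_p\circ\widehat Z\circ\iota_*$ is well-defined on all of $\mathrm{Sk}^{\mathrm{SL}_2}_q(S^3\setminus K)$ and valued in $q^{\Delta_{\mathfrak s}}\mathbb Z[q^{\pm\frac14}][[q]]$.

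The third and crucial step is to show this composite \emph{factors through} $\mathrm{Sk}^{\mathrm{SL}_2}_q(S^3_p(K))_\alpha$, i.e.\ that it kills the kernel of $\iota_*$. The kernel is generated by the surgery relations: a skein in $S^3\setminus K$ that can be pushed across the filling torus equals, after isotopy in $S^3_p(K)$, a skein not meeting $\nu(T)$, and the difference of the two sides (expanded in the $V_m$ basis for the core) must be annihilated. Equivalently, one must verify the identity
\[
\mathcal L_p\Bigl(\chi_m(x)\cdot\widehat Z_{S^3\setminus K,(L,\vec m)}(x,q)\Bigr)
=\mathcal L_p\Bigl(\widehat Z_{S^3\setminus K,(L\cup(\text{slope-}p\text{ pushoff}),\,\vec m\cup m)}(x,q)\Bigr)
\]
for every $m\ge1$, which reduces to the Gaussian-sum identity $\mathcal L_p\bigl((x^{m/2}-x^{-m/2})g(x)\bigr)=\mathcal L_p\bigl((x^{\text{shift}})g(x)\bigr)$ expressing how the Laplace transform intertwines multiplication by $x^{\pm m/2}$ with the $\mathbb Z/p\mathbb Z$-translation of the spin$^c$ label — precisely the compatibility of $\widehat Z$ with the Laplace-transform surgery formula asserted in the theorem statement. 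I expect this last point to be the main obstacle: one has to check that all the surgery relations in the skein module of $S^3_p(K)$ — not just the single meridian-filling relation but also the slides of $L$ over the reglued handle — are matched by genuine identities among Laplace-transformed $q$-series, uniformly in the coloring $m$ and compatibly with the $\alpha$-twisted spin$^c$ bookkeeping of Section~\ref{subsec:twisted-spin-c}. Once the relations are verified, linearity and integrality are automatic, and surjectivity of $\iota_*$ onto the $\alpha$-graded piece makes the descended map canonical, completing the proof.
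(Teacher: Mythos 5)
Your setup is the right one: the skein module of $S^3_p(K)$ is the quotient of $\Sk^{\mathrm{SL}_2}_q(S^3\setminus K)$ by handle-slide relations over the $p$-framed $K$, and the content of the theorem is that $\mathcal{L}^{(p)}\circ\widehat{Z}$ annihilates those relations. But you stop exactly where the proof begins. The paragraph ending with ``I expect this last point to be the main obstacle'' defers the entire mathematical content of the theorem; nothing in your proposal actually establishes handle-slide invariance, and the one identity you do write down is only the degenerate case of sliding a \emph{split meridian} component over $K$ (where the insertion really is multiplication by $\chi_m(x)$). A general handle slide moves a single strand of an arbitrary colored link $L$ across $K$, and the rest of $L$ can be knotted and linked with everything; there is no global ``multiply by $\chi_m$'' description of such a skein, so the relation cannot be reduced to a formula of the shape $\mathcal{L}_p(\chi_m\cdot g)=\mathcal{L}_p(x^{\mathrm{shift}}g)$ acting on a fixed series $g$.

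The paper closes this gap by making the argument \emph{local} to the strand being slid. One first reduces to the $V_2$-colored case, then fuses the $V_2$ strand onto the Verma-colored $K$ using \eqref{eq:fusion}: this decomposes $V_2\otimes V_\infty(x)\cong V_\infty(qx)\oplus V_\infty(q^{-1}x)$, so the two fusion channels act on the $x$-series by the operator $\hat{y}^{\pm1}\colon x^u\mapsto q^{\pm u}x^u$ --- not by multiplication by a power of $x$. The key computation is then Lemma \ref{lem:p-surgery-identity}, an identity for $\mathcal{L}^{(p)}$ involving both $\hat{x}$ and $\hat{y}$, which trades the pre-slide fusion coefficient for the post-slide one times the factor $(q^{\frac14}\hat{x}^{-\frac{\epsilon}{2}})^p$; by the eigenvalue equation \eqref{eq:eigen-value-eq} that factor is exactly the framing correction for $p$ full twists, and the junction-sliding relation \eqref{eq:sliding} then transports the trivalent vertex along $K$ and reattaches the strand as the handle-slid link $L'$. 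Your sketch misses the $\hat{y}$-dependence entirely, and without it the purported Gaussian-sum identity is not even well posed for a general series $g$. So the architecture of your proposal is sound, but the theorem's actual proof --- the fusion calculus plus Lemma \ref{lem:p-surgery-identity} --- is absent.
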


Since the skein modules of closed 3-manifolds are finite-dimensional over $\mathbb{Q}(q^{\frac14})$ \cite{GunninghamJordanSafronov}, after tensoring with $\mathbb{Q}(q^{\frac14})$, the image of the map defined in Theorem \ref{mainthm:surgery} is a finite-dimensional vector space of $q$-series. 
We conjecture that this image can be extended to a matrix-valued holomorphic quantum modular form. 
We have verified this in the following example: 
\begin{mainthm}\label{mainthm:quantum-modularity}
For $Y = \pm\Sigma(2,3,7)$, the image of the map defined in Theorem \ref{mainthm:surgery} extends to a matrix-valued holomorphic quantum modular form. 
\end{mainthm}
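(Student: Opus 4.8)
The plan is to reduce Theorem~\ref{mainthm:quantum-modularity} to the classical quantum modularity of unary false (partial) theta functions, by making the map of Theorem~\ref{mainthm:surgery} completely explicit for this manifold. First I would set up the surgery picture: realize $+\Sigma(2,3,7)$ as $(-1)$-surgery on the left-handed trefoil $3_1$ (equivalently, on the figure-eight knot), and $-\Sigma(2,3,7)$ by reversing orientation; using the explicit $\widehat{Z}_{S^3\setminus 3_1}(x,q)$ of Gukov--Manolescu one checks that the relevant slope lies in the convergence range required in Theorem~\ref{mainthm:surgery}. Since $\Sigma(2,3,7)$ is an integer homology sphere, the grading $\alpha$ and the twisted $\mathrm{Spin}^{c}$ structure $\mathfrak{s}$ are unique, so Theorem~\ref{mainthm:surgery} yields a single $\mathbb{Z}[q^{\pm 1/4}]$-linear map $\widehat{Z}_{\mathfrak s}\colon \Sk^{\mathrm{SL}_2}_q(\Sigma(2,3,7))\to q^{\Delta_{\mathfrak s}}\mathbb{Z}[q^{\pm 1/4}][[q]]$ whose image we must identify. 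Using the known computation of the Kauffman bracket skein module of this Brieskorn sphere, I would fix a finite generating set of curves, taken to be the $V_m$-colored cores (framed cables) of the surgery solid torus, i.e.\ $V_m$-colored cabled unknots linking $3_1$ once.

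Second, I would evaluate $\widehat{Z}_{\mathfrak s}$ on each generator using the Laplace-transform surgery formula of Theorem~\ref{mainthm:surgery}: inserting a $V_m$-colored curve of linking number $\ell$ multiplies $\widehat{Z}_{S^3\setminus 3_1}(x,q)$ by the Laurent polynomial $\chi_m(x^{\ell})=\sum_{k}x^{\ell k}$ before the Laplace transform, which only reindexes the Gaussian sum and shifts its center. Hence each $\widehat{Z}_{\mathfrak s}$ of a generator is, up to an overall rational power of $q$, a finite $\mathbb{Z}$-linear combination of unary false theta functions $\widetilde\Psi^{(a)}_{P}(q)=\sum_{n\ge 0}\psi^{(a)}(n)\,q^{n^2/4P}$ with $P=2\cdot 3\cdot 7=42$ and $\psi^{(a)}$ an odd periodic function modulo $2P$ (depending on $m$, $\ell$, and the residues produced by the Laplace transform). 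Carrying this out for all generators exhibits the image of $\widehat{Z}_{\mathfrak s}$ as an explicit $\mathbb{Q}(q^{1/4})$-span of finitely many such $\widetilde\Psi^{(a)}$; doing the same for the reversed orientation produces the companion series, naturally functions of $q^{-1}$, i.e.\ living on the lower half-plane.

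Third, I would invoke modularity. By construction each $\widetilde\Psi^{(a)}_{P}$ is the Eichler integral (half-derivative) of a weight-$\frac32$ unary theta function $\Theta^{(a)}_{P}(\tau)=\sum_{n}n\,\psi^{(a)}(n)\,q^{n^2/4P}$, and the vector $(\Theta^{(a)}_{P})_a$ is a vector-valued modular form of weight $\frac32$ for a congruence subgroup; this is the structure exploited by Lawrence--Zagier for $\Sigma(2,3,5)$ and by Hikami for $\Sigma(2,3,7)$. Consequently the non-holomorphic completion of $\widetilde\Psi^{(a)}$, obtained by adjoining a period integral $\int \Theta^{(a)}_{P}(w)(w-\tau)^{-1/2}\,dw$, is modular of weight $\frac12$, and for $\gamma=\left(\begin{smallmatrix}a&b\\c&d\end{smallmatrix}\right)$ the cocycle $\widetilde\Psi^{(a)}(\tau)-(c\tau+d)^{-1/2}\widetilde\Psi^{(a)}(\gamma\tau)$ equals, up to a constant, a period integral of $\Theta^{(a)}_{P}$ over a path joining the cusps $\gamma^{-1}(i\infty)$ and $i\infty$, which (after a contour deformation) converges and is holomorphic precisely on the cut plane $\mathbb{C}_\gamma=\mathbb{C}\setminus\frac1c\mathbb{R}_{\le -d}$. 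The radial limits of $\widetilde\Psi^{(a)}$ at rationals are, up to normalization, the Witten--Reshetikhin--Turaev invariants of $\pm\Sigma(2,3,7)$ decorated by the corresponding skein element, which is the source of the ``quantum'' part of the statement. Assembling the generators from the previous step and their reversed-orientation companions into a matrix $F(\tau)$ on $\mathbb{C}\setminus\mathbb{R}$, indexed by the chosen skein-module basis and orientation, the above shows $F$ is a matrix-valued holomorphic quantum modular form.

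The main obstacle is the second step: one must first confirm a complete basis (and its grading) of $\Sk^{\mathrm{SL}_2}_q(\Sigma(2,3,7))$, and then compute the colored $\widehat{Z}$ on every basis element in closed form, i.e.\ evaluate the Laplace transform of $\chi_m(x^{\ell})\,\widehat{Z}_{S^3\setminus 3_1}(x,q)$ and pin down the periodic functions $\psi^{(a)}$. Once these explicit false theta functions are in hand, the modularity step is essentially a citation of Lawrence--Zagier and Hikami together with the short verification that every entry of $F$ — including the off-diagonal ones mixing the two orientations — is an Eichler integral of the same finite weight-$\frac32$ theta system, whence the holomorphic extension of the full matrix cocycle follows from the period-integral representation.
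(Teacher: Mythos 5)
There is a genuine gap, and it occurs at the heart of your third step. For $-\Sigma(2,3,7)$ the image of the map of Theorem~\ref{mainthm:surgery} is spanned by the descendants $\mathcal{F}_0^{\lambda}(q)=\sum_{n\geq 0}q^{n^2-\lambda n}/(q^{n+1};q)_n$ of Ramanujan's order-$7$ \emph{mock} theta function, not by unary false theta functions; only for the opposite orientation does the Hikami-type identity convert the series into partial theta functions with $P=42$. The Lawrence--Zagier argument you invoke --- writing the series as an Eichler integral of a weight-$\tfrac32$ unary theta function and reading off the cocycle as a period integral over a path of cusps --- applies to false thetas but not to genuine mock theta functions, whose completion requires a non-holomorphic Zwegers-type period integral and whose quantum modularity is of a different (and harder) nature. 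So your argument, as written, cannot treat the orientation $-\Sigma(2,3,7)$ at all, and the ``off-diagonal entries mixing the two orientations'' are precisely where the mock/false asymmetry bites.

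Separately, even for the orientation where the image does consist of false thetas, the statement to be proved is \emph{matrix-valued} holomorphic quantum modularity in the sense used in the paper: one must complete the row of $q$-series into an invertible square matrix $Q(q)$ (rows indexed by the four flat connections) such that $Q(q\vert_{\gamma})^{-1}\, j_{\gamma}(\tau)\, Q(q)$ extends holomorphically to $\mathbb{C}_{\gamma}$. Your construction only produces the trivial-flat-connection row, and the matrix ``indexed by skein basis and orientation'' is not invertible in any evident sense, so the cocycle cannot even be formed. The paper supplies the missing rows as the series $A_{\lambda},B_{\lambda},C_{\lambda}$ (essentially the order-$7$ mock theta functions attached to the non-trivial flat connections), obtains the holomorphic extension from the residue factorization of the state integrals $S_{\lambda,\mu}$ and $mS_{\lambda,\mu}$ built from the quantum dilogarithm, and inverts $Q_{\lambda}(\tilde q)$ using the semilinear pairing of Proposition~\ref{prop:semilinear-pairing}; none of this structure is recoverable from the Eichler-integral route alone. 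A smaller but real inaccuracy in your second step: inserting a $V_m$-colored curve multiplies $\widehat{Z}_{S^3\setminus K}(x,q)$ by $\chi_m$ only for the \emph{meridian} (cf.\ the $W_n$ computations for the trefoil in Example~\ref{eg:trefoil-two-variable-series-module}, which are not of that form), and the Laplace transforms of $\chi_m\cdot\widehat{Z}_{S^3\setminus K}$ produce descendants as in Lemma~\ref{lem:R_chi-action-surgery} rather than single unary false thetas.
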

The matrix-valued cocycle appearing in this holomorphic quantum modularity provides a natural candidate for the Langlands duality map for skein modules \cite{Jordan}; see Section \ref{subsec:Langlands}.

\subsection*{Organization of the paper}
This paper is organized as follows. 
In Section \ref{sec:MMR}, we show Theorem \ref{mainthm:MMR} on the MMR expansion with a line defect. 
Building upon that, in Section \ref{sec:skein-q-series}, we prove Theorem \ref{mainthm:skein-q-series}, constructing a linear map $\widehat{Z}$ from skein modules of complements of nice knots to $(x,q)$-series. 
Then, in Section \ref{sec:surgery}, we show that our map $\widehat{Z}$ is compatible with the Laplace transform surgery formula (Theorem \ref{mainthm:surgery}). 
Finally, in Section \ref{sec:modularity}, we show Theorem \ref{mainthm:quantum-modularity} and 
present some conjectures and speculations on quantum modularity and Langlands duality of skein modules.

\subsection*{Notation}
Here we collect the notation and convention we use throughout the paper. 
\begin{itemize}
\item $[n]$, without any subscript, denotes the ``balanced'' version of ``quantum $n$'' defined by
\[
[n] := \frac{q^{\frac{n}{2}}-q^{-\frac{n}{2}}}{q^{\frac12}-q^{-\frac12}} \in \mathbb{Z}[q^{\pm \frac{1}{2}}]. 
\]
We will also sometimes use ``unbalanced'' versions of quantum integers and quantum binomials, and they will be written with a subscript:
\[
[n]_q := \frac{1-q^n}{1-q},\quad \qbin{n}{k}_q := \frac{[n]_q[n-1]_q \cdots [n-k+1]_q}{[k]_q [k-1]_q \cdots [1]_q} \in \mathbb{Z}[q^{\pm 1}].
\]
\item For each $n\geq 1$, $V_n$ denotes the $n$-dimensional irreducible representation of $U_q(\mathfrak{sl}_2)$. 
Explicitly, it is spanned by basis vectors $\{v_i\}_{0\leq i\leq n-1}$, on which $E,F,K^{\pm 1}$ act by
\[
E v_i = [i] v_{i-1},\quad
F v_i = [n-1-i] v_{i+1},\quad
K^{\pm 1} v_i = q^{\pm \frac{n-1-2i}{2}} v_i.
\]
\item $V_\infty(x)$ denotes the Verma module with highest weight $\lambda_x := \log_q x - 1$. 
Explicitly, it is spanned by basis vectors $\{v_i\}_{i\geq 0}$, on which $E,F,K^{\pm 1}$ act by
\[
E v_i = [i] v_{i-1},\quad
F v_i = [\lambda_x-i] v_{i+1},\quad
K^{\pm 1} v_i = q^{\pm \frac{\lambda_x -2i}{2}} v_i.
\]
\item $J_{K,n}(q)$ denotes the usual framing-dependent, unnormalized colored Jones polynomial where increasing a framing of an $n$-colored component by 1 gives an extra factor of $q^{\frac{n^2-1}{4}}$. 
In particular, when everything is Seifert-framed, the 2-colored (i.e. ordinary) Jones polynomial satisfies the usual skein relations:
\begin{align*}
q J_{K_+}(q) - q^{-1} J_{K_-}(q) &= (q^{\frac12} - q^{-\frac12}) J_{K_=}(q), \\
J_{O}(q) &= [2].
\end{align*}
In our convention, the Hopf link $H$ (in either orientation) with color $(m,n)$ has the colored Jones polynomial
\[
J_{H,m,n}(q) = [mn]. 
\]
While this colored Jones polynomial is insensitive to overall change of orientation, in general, it depends on the orientation of individual components. 
\end{itemize}

\subsection*{Acknowledgements}
I would like to thank Miranda Cheng, Dan Freed, Sergei Gukov, David Jordan, Ahsan Khan, and Campbell Wheeler for inspiring conversations. 
I also thank the organizers of 2024 Richmond Geometry Meeting -- Marco Aldi, Allison Moore, and Nicola Tarasca -- for their hospitality, and for the opportunity to prepare these notes. 

This work was supported in part by Simons Foundation through Simons Collaboration on Global Categorical Symmetries.

\section{Melvin-Morton-Rozansky with skeins}\label{sec:MMR}

\subsection{Brief review of the finite state sum model}
In this subsection, we briefly review how the colored Jones polynomials $\frac{1}{[n]} J_{(K,n),(L,\vec{m})}(q)$ can be realized as a finite state sum, mainly to set up some notation and terminology. 

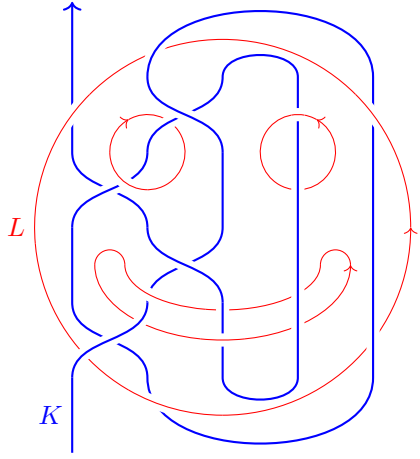
\begin{figure}
\centering
\[
\vcenter{\hbox{
\begin{tikzpicture}
\draw[red, ->] (2, 2) [partial ellipse = 0 : 360 : 2.5];
\draw[red, ->] (1, 3) [partial ellipse = 120 : -240 : 0.5];
\draw[red, ->] (3, 3) [partial ellipse = 60 : 420 : 0.5];
\draw[red, ->] (2, 1.5) [partial ellipse = -180 : 0 : 1.7 and 1.0];
\draw[red] (2, 1.5) [partial ellipse = -180 : 0 : 1.3 and 0.6];
\draw[red] (0.5, 1.5) [partial ellipse = 0 : 180 : 0.2];
\draw[red] (3.5, 1.5) [partial ellipse = 0 : 180 : 0.2];

\draw[white, line width=5] (0, -1) -- (0, 0) to[out=90, in=-90] (1, 1) to[out=90, in=-90] (2, 2) -- (2, 3) to[out=90, in=-90] (1, 4) to[out=90, in=90] (4, 4) -- (4, 0) to[out=-90, in=-90] (1, 0) to[out=90, in=-90] (0, 1) -- (0, 2) to[out=90, in=-90] (1, 3) to[out=90, in=-90] (2, 4) to[out=90, in=90] (3, 4) -- (3, 0) to[out=-90, in=-90] (2, 0) -- (2, 1) to[out=90, in=-90] (1, 2) to[out=90, in=-90] (0, 3) -- (0, 4) -- (0, 5); 
\draw[blue, thick, ->] (0, -1) -- (0, 0) to[out=90, in=-90] (1, 1) to[out=90, in=-90] (2, 2) -- (2, 3) to[out=90, in=-90] (1, 4) to[out=90, in=90] (4, 4) -- (4, 0) to[out=-90, in=-90] (1, 0) to[out=90, in=-90] (0, 1) -- (0, 2) to[out=90, in=-90] (1, 3) to[out=90, in=-90] (2, 4) to[out=90, in=90] (3, 4) -- (3, 0) to[out=-90, in=-90] (2, 0) -- (2, 1) to[out=90, in=-90] (1, 2) to[out=90, in=-90] (0, 3) -- (0, 4) -- (0, 5); 
\draw[white, line width=5] (0, 0) to[out=90, in=-90] (1, 1);
\draw[blue, thick] (0, 0) to[out=90, in=-90] (1, 1);
\draw[white, line width=5] (0, 2) to[out=90, in=-90] (1, 3);
\draw[blue, thick] (0, 2) to[out=90, in=-90] (1, 3);
\draw[white, line width=5] (2, 1) to[out=90, in=-90] (1, 2);
\draw[blue, thick] (2, 1) to[out=90, in=-90] (1, 2);
\draw[white, line width=5] (2, 3) to[out=90, in=-90] (1, 4);
\draw[blue, thick] (2, 3) to[out=90, in=-90] (1, 4);

\draw[white, line width=5] (2, 2) [partial ellipse = 0 : 45 : 2.5];
\draw[red] (2, 2) [partial ellipse = 0 : 45 : 2.5];
\draw[white, line width=5] (2, 2) [partial ellipse = 135 : 185 : 2.5];
\draw[red] (2, 2) [partial ellipse = 135 : 185 : 2.5];
\draw[white, line width=5] (2, 2) [partial ellipse = -120 : -90 : 2.5];
\draw[red] (2, 2) [partial ellipse = -120 : -90 : 2.5];
\draw[white, line width=5] (1, 3) [partial ellipse = -180 : -90 : 0.5];
\draw[red] (1, 3) [partial ellipse = -180 : -90 : 0.5];
\draw[white, line width=5] (3, 3) [partial ellipse = 80 : 100 : 0.5];
\draw[red] (3, 3) [partial ellipse = 80 : 100 : 0.5];
\draw[white, line width=5] (2, 1.5) [partial ellipse = -100 : -80 : 1.7 and 1.0];
\draw[red] (2, 1.5) [partial ellipse = -100 : -80 : 1.7 and 1.0];
\draw[white, line width=5] (2, 1.5) [partial ellipse = -180 : -100 : 1.3 and 0.6];
\draw[red] (2, 1.5) [partial ellipse = -180 : -100 : 1.3 and 0.6];

\node[red, left] at (-0.5, 2.0){$L$};
\node[blue, left] at (0, -0.5){$K$};
\end{tikzpicture}
}}
\]
\caption{$K \sqcup L$ as a colored $(1, 1)$-tangle}
\label{fig:(1,1)-tangle}
\end{figure}

Pick a base point on $K$ and place it at the point at infinity in $S^3$ to draw the link $K \sqcup L$ as a $(1,1)$-tangle diagram $D$; see Figure \ref{fig:(1,1)-tangle} for an example. 
The image of this colored tangle under the Reshetikhin-Turaev functor is
\[
\frac{1}{[n]} J_{(K,n),(L,\vec{m})}(q) \cdot \mathbf{1}_{V_n}.
\]
Hence, $\frac{1}{[n]} J_{(K,n),(L,\vec{m})}(q)$ is equal to the standard state sum on $D$, with the state on the external arcs fixed, say, to the highest weight vector $v_0 \in V_n$. 
To describe this explicitly, let us set up some notations and terminologies: 
\begin{itemize}
\item An \emph{arc} is an edge of the tangle diagram, viewed as a planar graph. 
An arc is called \emph{external} if it corresponds to one of the open ends of the tangle, and \emph{internal} if it is not. 
Let $\mathcal{A}$ denote the set of all internal arcs. 
We will also write $\mathcal{A} = \mathcal{A}_L \sqcup \mathcal{A}_K$, where $\mathcal{A}_L$ denotes the set of arcs of $L$, and $\mathcal{A}_K$ the set of internal arcs of $K$. 
\item A \emph{state} $s$ is a map $s : \mathcal{A} \rightarrow \mathbb{N}$. 
For each arc $a \in \mathcal{A}$, we call $s(a) \in \mathbb{N}$ the \emph{spin} on $a$. 
A state $s$ is called \emph{admissible} if, whenever $a \in \mathcal{A}$ is an arc colored by $V_{k}$, the corresponding spin is $0\leq s(a) \leq k-1$, so that it determines a vector $v_{s(a)} \in V_k$. 
Let $\mathcal{S}$ denote the set of all admissible states. 
\item For each admissible state $s \in \mathcal{S}$, let $\langle D_{(K,V_n), \{(L_i, V_{m_i})\}}; s\rangle$ denote the \emph{weight} of the state $s$, given by the product of the corresponding finite-dimensional $R$-matrices, cups and caps; these are summarized in the fourth row of Table \ref{tab:R-matrices} and the last column of Table \ref{tab:cupsandcaps}.  
\end{itemize}
In summary, the colored Jones polynomial can be computed as the state sum:
\[
\frac{1}{[n]} J_{(K,n),(L,\vec{m})}(q) 
= 
\sum_{s} \langle D_{(K,V_n), \{(L_i, V_{m_i})\}}; s\rangle. 
\]
This is a finite sum, since each component of the link $K \sqcup L$ is colored by some finite-dimensional representation. 
In the next subsection, we will consider an extension of this state sum model, where $K$ is colored by a Verma module $V_\infty(x)$ instead. 

\begin{table}
\centering
\begin{tabular}{|c|c||c|c|}
\hline
$
\vcenter{\hbox{
\begin{tikzpicture}[scale=0.7]
\draw[blue, ->, thick] (1, 0) -- (0, 1);
\draw[white, line width=5] (0, 0) -- (1, 1);
\draw[blue, ->, thick] (0, 0) -- (1, 1);
\node[blue, left] at (0, 0){$i$};
\node[blue, right] at (1, 0){$j$};
\node[blue, left] at (0, 1){$i'$};
\node[blue, right] at (1, 1){$j'$};
\node[blue, below] at (-0.2, -0.3){\tiny $V_\infty(x)$};
\node[blue, below] at (1.2, -0.3){\tiny $V_\infty(x)$};
\end{tikzpicture}
}}
$
 & $q^{\frac14} R(x,x,q)_{i,j}^{i',j'}$ 
&
$
\vcenter{\hbox{
\begin{tikzpicture}[scale=0.7]
\draw[blue, ->, thick] (0, 0) -- (1, 1);
\draw[white, line width=5] (1, 0) -- (0, 1);
\draw[blue, ->, thick] (1, 0) -- (0, 1);
\node[blue, left] at (0, 0){$i$};
\node[blue, right] at (1, 0){$j$};
\node[blue, left] at (0, 1){$i'$};
\node[blue, right] at (1, 1){$j'$};
\node[blue, below] at (-0.2, -0.3){\tiny $V_\infty(x)$};
\node[blue, below] at (1.2, -0.3){\tiny $V_\infty(x)$};
\end{tikzpicture}
}}
$
 & $q^{-\frac14} R^{-1}(x,x,q)_{i,j}^{i',j'}$ 
\\
\hline
$
\vcenter{\hbox{
\begin{tikzpicture}[scale=0.7]
\draw[red, ->] (1, 0) -- (0, 1);
\draw[white, line width=5] (0, 0) -- (1, 1);
\draw[blue, ->, thick] (0, 0) -- (1, 1);
\node[blue, left] at (0, 0){$i$};
\node[red, right] at (1, 0){$j$};
\node[red, left] at (0, 1){$i'$};
\node[blue, right] at (1, 1){$j'$};
\node[blue, below] at (-0.2, -0.3){\tiny $V_\infty(x)$};
\node[red, below] at (1.2, -0.3){\tiny $V_m$};
\end{tikzpicture}
}}
$
 & $x^{\frac{m}{4}} R(x,q^m,q)_{i,j}^{i',j'}$ 
&
$
\vcenter{\hbox{
\begin{tikzpicture}[scale=0.7]
\draw[blue, ->, thick] (0, 0) -- (1, 1);
\draw[white, line width=5] (1, 0) -- (0, 1);
\draw[red, ->] (1, 0) -- (0, 1);
\node[blue, left] at (0, 0){$i$};
\node[red, right] at (1, 0){$j$};
\node[red, left] at (0, 1){$i'$};
\node[blue, right] at (1, 1){$j'$};
\node[blue, below] at (-0.2, -0.3){\tiny $V_\infty(x)$};
\node[red, below] at (1.2, -0.3){\tiny $V_m$};
\end{tikzpicture}
}}
$
 & $x^{-\frac{m}{4}} R^{-1}(x,q^m,q)_{i,j}^{i',j'}$ 
 \\
\hline
$
\vcenter{\hbox{
\begin{tikzpicture}[scale=0.7]
\draw[blue, ->, thick] (1, 0) -- (0, 1);
\draw[white, line width=5] (0, 0) -- (1, 1);
\draw[red, ->] (0, 0) -- (1, 1);
\node[red, left] at (0, 0){$i$};
\node[blue, right] at (1, 0){$j$};
\node[blue, left] at (0, 1){$i'$};
\node[red, right] at (1, 1){$j'$};
\node[red, below] at (-0.2, -0.3){\tiny $V_m$};
\node[blue, below] at (1.2, -0.3){\tiny $V_\infty(x)$};
\end{tikzpicture}
}}
$
 & $x^{\frac{m}{4}} R(q^m,x,q)_{i,j}^{i',j'}$ 
&
$
\vcenter{\hbox{
\begin{tikzpicture}[scale=0.7]
\draw[red, ->] (0, 0) -- (1, 1);
\draw[white, line width=5] (1, 0) -- (0, 1);
\draw[blue, ->, thick] (1, 0) -- (0, 1);
\node[red, left] at (0, 0){$i$};
\node[blue, right] at (1, 0){$j$};
\node[blue, left] at (0, 1){$i'$};
\node[red, right] at (1, 1){$j'$};
\node[red, below] at (-0.2, -0.3){\tiny $V_m$};
\node[blue, below] at (1.2, -0.3){\tiny $V_\infty(x)$};
\end{tikzpicture}
}}
$
 & $x^{-\frac{m}{4}} R^{-1}(q^m,x,q)_{i,j}^{i',j'}$
 \\
 \hline
$
\vcenter{\hbox{
\begin{tikzpicture}[scale=0.7]
\draw[red, ->] (1, 0) -- (0, 1);
\draw[white, line width=5] (0, 0) -- (1, 1);
\draw[red, ->] (0, 0) -- (1, 1);
\node[red, left] at (0, 0){$i$};
\node[red, right] at (1, 0){$j$};
\node[red, left] at (0, 1){$i'$};
\node[red, right] at (1, 1){$j'$};
\node[red, below] at (-0.2, -0.3){\tiny $V_{m_1}$};
\node[red, below] at (1.2, -0.3){\tiny $V_{m_2}$};
\end{tikzpicture}
}}
$
 & $q^{\frac{m_1 m_2}{4}} R(q^{m_1},q^{m_2},q)_{i,j}^{i',j'}$ 
&
$
\vcenter{\hbox{
\begin{tikzpicture}[scale=0.7]
\draw[red, ->] (0, 0) -- (1, 1);
\draw[white, line width=5] (1, 0) -- (0, 1);
\draw[red, ->] (1, 0) -- (0, 1);
\node[red, left] at (0, 0){$i$};
\node[red, right] at (1, 0){$j$};
\node[red, left] at (0, 1){$i'$};
\node[red, right] at (1, 1){$j'$};
\node[red, below] at (-0.2, -0.3){\tiny $V_{m_1}$};
\node[red, below] at (1.2, -0.3){\tiny $V_{m_2}$};
\end{tikzpicture}
}}
$
 & $q^{-\frac{m_1 m_2}{4}} R^{-1}(q^{m_1},q^{m_2},q)_{i,j}^{i',j'}$\\
 \hline
\multicolumn{4}{|c|}{
{$
\begin{aligned}
R(x_1, x_2, q)_{i,j}^{i', j'} &:= \delta_{i+j, i'+j'} x_1^{-\frac{i-j'}{4} - \frac{j}{2} -\frac{1}{4}} x_2^{\frac{i-j'}{4} - \frac{j'}{2} - \frac{1}{4}} q^{(j+\frac{1}{2})(j'+\frac{1}{2})} \qbin{i}{j'}_q 
(q^{j+1}x_2^{-1};q)_{i-j'}, \\
R^{-1}(x_1,x_2,q)_{i,j}^{i',j'} &:= R(x_2^{-1},x_1^{-1},q^{-1})_{j,i}^{j',i'}.
\end{aligned}
$}
} \\
\hline
\end{tabular}
\caption{$R$-matrices; note that the exponential prefactor $e^{\frac{\log x_1 \log x_2}{4\log q}}$ is omitted from $R(x_1,x_2,q)$.}
\label{tab:R-matrices}
\end{table}

\begin{table}
\centering
\begin{tabular}{|c|c||c|c|}
\hline
$
\vcenter{\hbox{
\begin{tikzpicture}[scale=0.7]
\draw[blue, ->, thick] (0, 0) to[out=90, in=90] (1, 0);
\node[blue, left] at (0, 0){$i$};
\node[blue, right] at (1, 0){$\;$};
\node[blue, below] at (0.5, -0.1){\tiny $V_{\infty}(x)$};
\end{tikzpicture}
}}
$
& $x^{\frac14} q^{-\frac14 -\frac{i}{2}}$
& 
$
\vcenter{\hbox{
\begin{tikzpicture}[scale=0.7]
\draw[red, ->] (0, 0) to[out=90, in=90] (1, 0);
\node[red, left] at (0, 0){$i$};
\node[red, right] at (1, 0){$\;$};
\node[red, below] at (0.5, -0.1){\tiny $V_{m}$};
\end{tikzpicture}
}}
$
& $q^{\frac{m-1}{4} -\frac{i}{2}}$\\
\hline
$
\vcenter{\hbox{
\begin{tikzpicture}[scale=0.7]
\draw[blue, ->, thick] (1, 0) to[out=-90, in=-90] (0, 0);
\node[blue, left] at (0, 0){$\;$};
\node[blue, right] at (1, 0){$i$};
\node[blue, below] at (0.5, -0.3){\tiny $V_{\infty}(x)$};
\end{tikzpicture}
}}
$ & $x^{\frac14} q^{-\frac14 -\frac{i}{2}}$
&
$
\vcenter{\hbox{
\begin{tikzpicture}[scale=0.7]
\draw[red, ->] (1, 0) to[out=-90, in=-90] (0, 0);
\node[red, left] at (0, 0){$\;$};
\node[red, right] at (1, 0){$i$};
\node[red, below] at (0.5, -0.3){\tiny $V_{m}$};
\end{tikzpicture}
}}
$ & $q^{\frac{m-1}{4} -\frac{i}{2}}$
\\
\hline
$
\vcenter{\hbox{
\begin{tikzpicture}[scale=0.7]
\draw[blue, <-, thick] (0, 0) to[out=90, in=90] (1, 0);
\node[blue, left] at (0, 0){$\;$};
\node[blue, right] at (1, 0){$i$};
\node[blue, below] at (0.5, -0.1){\tiny $V_{\infty}(x)$};
\end{tikzpicture}
}}
$ & $x^{-\frac14} q^{\frac14 +\frac{i}{2}}$
&
$
\vcenter{\hbox{
\begin{tikzpicture}[scale=0.7]
\draw[red, <-] (0, 0) to[out=90, in=90] (1, 0);
\node[red, left] at (0, 0){$\;$};
\node[red, right] at (1, 0){$i$};
\node[red, below] at (0.5, -0.1){\tiny $V_{m}$};
\end{tikzpicture}
}}
$ & $q^{-\frac{m-1}{4} +\frac{i}{2}}$
\\
\hline
$
\vcenter{\hbox{
\begin{tikzpicture}[scale=0.7]
\draw[blue, <-, thick] (1, 0) to[out=-90, in=-90] (0, 0);
\node[blue, left] at (0, 0){$i$};
\node[blue, right] at (1, 0){$\;$};
\node[blue, below] at (0.5, -0.3){\tiny $V_{\infty}(x)$};
\end{tikzpicture}
}}
$ & $x^{-\frac14} q^{\frac14 +\frac{i}{2}}$
&
$
\vcenter{\hbox{
\begin{tikzpicture}[scale=0.7]
\draw[red, <-] (1, 0) to[out=-90, in=-90] (0, 0);
\node[red, left] at (0, 0){$i$};
\node[red, right] at (1, 0){$\;$};
\node[red, below] at (0.5, -0.3){\tiny $V_{m}$};
\end{tikzpicture}
}}
$ & $q^{-\frac{m-1}{4} +\frac{i}{2}}$
\\
\hline
\end{tabular}
\caption{Cups and caps}
\label{tab:cupsandcaps}
\end{table}

\subsection{Proof of Theorem \ref{mainthm:MMR}}
Since the finite-dimensional $R$-matrices agree with truncations of the infinite-dimensional $R$-matrix for the Verma module, the above finite state sum can be viewed as a truncation of the infinite state sum where $K$ is colored by the Verma module $V_\infty(x=q^n)$ with highest weight $\lambda_x = \log_q x -1  = n-1$: 
\[
\sum_{s} \langle D_{(K,V_n), \{(L_i, V_{m_i})\}}; s\rangle 
= 
\sum_{\substack{s \\ \max s\vert_{K} < n}} \langle D_{(K,V_\infty(x)), \{(L_i, V_{m_i})\}}; s\rangle \bigg\vert_{x=q^n}. 
\]
The weights for the crossings, cups and caps are summarized in Tables \ref{tab:R-matrices} and \ref{tab:cupsandcaps}.  

\begin{lem}\label{lem:truncation-bound}
In the infinite state sum where $K$ is colored by the Verma module $V_\infty(x)$, 
any state containing a spin $\geq n$ is of $O((1-x)^{n})$. 
That is, 
\[
\sum_{s} \langle D_{(K,V_\infty(x)), \{(L_i, V_{m_i})\}}; s\rangle = 
\sum_{\substack{s \\ \max s\vert_K < n}} \langle D_{(K,V_\infty(x)), \{(L_i, V_{m_i})\}}; s\rangle 
+ O((1-x)^{n}),
\]
as formal power series in $(1-x)$. 
\end{lem}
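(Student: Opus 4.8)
The plan is to bound the state sum term by term, by showing that for every admissible state $s$ with $\max s\vert_K\geq n$ the weight $\langle D_{(K,V_\infty(x))};s\rangle$ is divisible by at least $n$ factors each of the form $1-q^{c}x^{e}$, $c\in\mathbb{Z}$, $e\in\{-1,0,1\}$. Each such factor vanishes at $(q,x)=(1,1)$ and so lies in the ideal $\mathfrak{m}:=(q-1,1-x)$; hence the weight lies in $\mathfrak{m}^{n}$, and since in the relevant regime $q=e^{\hbar}$ and (after the substitution $x=q^{n}$ that closes Theorem \ref{mainthm:MMR}) $1-x$ and $q-1$ both generate the ideal $(\hbar)$, this is exactly the assertion that the weight is $O((1-x)^{n})$. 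As only finitely many admissible states share a given value of $\max s\vert_K$, summing over all $s$ with $\max s\vert_K\geq n$ preserves the bound, which is the displayed identity.

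First I would record the shape of an individual weight from Tables \ref{tab:R-matrices} and \ref{tab:cupsandcaps}: it is a product over the crossings, cups and caps of $D$. A cup or a cap forces its two legs to carry equal spin and contributes a Laurent monomial in $x^{1/4},q^{1/4}$. A crossing contributes the corresponding entry of $R^{\pm1}$, which is a Laurent monomial in $x_{1}^{1/4},x_{2}^{1/4},q^{1/4}$ times a $q$-binomial coefficient $\qbin{i}{j'}_q$ (not involving $x$) times a $q$-Pochhammer symbol of length equal to the spin transfer $t:=i-j'=i'-j$ at that crossing; each factor of such a Pochhammer symbol is of the form $1-q^{c}x^{e}$ above and hence lies in $\mathfrak{m}$. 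The exponential prefactors $e^{\log x_{1}\log x_{2}/(4\log q)}$ omitted from Table \ref{tab:R-matrices} multiply, over all crossings of $D$, to $e^{w_{K}(\log x)^{2}/(4\log q)}$ times a Laurent monomial in $x^{1/4},q^{1/4}$: the former is $1$ because $K$ is Seifert-framed, so its diagram has writhe $w_{K}=0$, and the latter is a unit at $x=1$; so the prefactors change neither the product structure nor the order in $1-x$.

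The heart of the matter is the estimate $\sum_{\text{crossings }c}t_{c}\geq\max s\vert_K$ for the total Pochhammer length. From the $R$-matrices one reads off that at each crossing, of either sign, the strand passing underneath has its spin increase by exactly $t$ and the strand passing over has its spin decrease by exactly $t$, while a cup or cap leaves spins unchanged. Traverse $K$ as a single oriented arc between its two external ends: its spin starts and ends at $0$ and attains $k:=\max s\vert_K$ somewhere in between, so the positive steps of this walk have total size at least $k$. Each positive step occurs where $K$ passes under some crossing $c$; it is the only positive step that $c$ contributes (at a self-crossing of $K$ the other passage of $K$ is an over-passage, giving a negative step); and its size is exactly $t_{c}$. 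Summing the sizes of the positive steps that occur before $K$ first reaches height $k$ gives at least $k$; these steps belong to distinct crossings, and the remaining crossings have $t_{c}\geq0$, so $\sum_{c}t_{c}\geq k\geq n$. Together with the previous paragraph, this shows $\langle D_{(K,V_\infty(x))};s\rangle$ is divisible by at least $n$ factors in $\mathfrak{m}$, which is what we wanted.

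I expect the main obstacle to be this last counting step, and in particular the verification that no spin is built up ``for free'': the local extrema of the height function on $K$ --- the cups and caps lying on the $K$-strand --- might \emph{a priori} seem able to produce a high-spin arc of $K$ with no accompanying under-crossing, and one has to rule this out using the diagonality of the cup/cap weights together with the connectedness of $K$ (every high-spin arc is separated from the spin-$0$ external arcs by at least one genuine under-crossing of $K$). A second, more clerical point is to pin down the ambient coefficient ring precisely, so that the ideal membership $\langle D_{(K,V_\infty(x))};s\rangle\in\mathfrak{m}^{n}$ becomes verbatim the ``$O((1-x)^{n})$ as formal power series in $1-x$'' used in the rest of Section \ref{sec:MMR}; this is the point at which the normalization $q=e^{\hbar}$ (rather than a generic $q$) is used.
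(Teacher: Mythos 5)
Your argument is essentially the paper's: the external arcs of $K$ carry spin $0$, so a state with some spin $\geq n$ on $K$ forces at least $n$ total upward jumps along $K$, and each jump of size $t$ occurs where $K$ passes under a crossing whose $R$-matrix entry carries a $q$-Pochhammer of length exactly $t$ (the factor $(q^{j+1}x^{-1};q)_{i-j'}$ with $x_2=x$), which is where the $n$ small factors come from. The only difference is the final bookkeeping: the paper asserts each Pochhammer factor is directly $O(1-x)$ (reading the bound in $\mathbb{Q}[x^{\pm\frac12}][[1-x]][[\hbar]]$ with $q=e^{\hbar}$), whereas you route through membership in $(q-1,1-x)^n$ and the substitution $x=q^n$ — this is exactly the coefficient-ring subtlety you flag yourself, and your treatment of it is no less precise than the paper's own.
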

\begin{proof}[Proof of Lemma \ref{lem:truncation-bound}]
Recall that the spins assigned to the external arcs of $K$ are fixed to be $0$. 
Hence, if some arc of $K$ is assigned a spin of $n$ or higher, then there must be at least $n$ total jumps in the spins along $K$. 
Since
\[
x^{\frac{m}{4}}R(q^m,x,q)_{i,j}^{i',j'} = O((1-x)^{i-j'})
\quad\text{and}\quad
x^{-\frac{m}{4}}R^{-1}(x,q^m,q)_{i,j}^{i',j'} = O((1-x)^{j-i'}),
\]
this means that the weight associated to such a state is of $O((1-x)^n)$. 
\end{proof}
Thanks to this lemma, the part of the infinite state sum that is truncated out to get the finite state sum, regarded as a formal power series in $(1-x)$, vanishes in the $n\rightarrow \infty$ limit. 
That is, in the asymptotic limit where $n \rightarrow \infty$ while $e^{n\hbar} = x$ is fixed, it suffices to consider the (untruncated) infinite state sum where $K$ is colored by $V_\infty(x)$:
\[
\frac{1}{[n]} J_{(K,n),(L,\vec{m})}(q) \bigg\vert_{q=e^{\hbar}} 
\;\overset{\substack{\hbar \rightarrow 0 \\ n \rightarrow \infty \\ n\hbar = u \text{ fixed}}}{\sim}\;
\sum_{s} \langle D_{(K,V_\infty(x)), \{(L_i, V_{m_i})\}}; s\rangle
\bigg\vert_{q=e^\hbar}. 
\]
Moreover, once we fix the power of $(1-x)$, it bounds the total number of jumps in the spins, and hence only finitely many states contribute. 
It immediately follows that this infinite state sum gives rise to a power series 
\[
\sum_{s} \langle D_{(K,V_\infty(x)), \{(L_i, V_{m_i})\}}; s\rangle
\bigg\vert_{q=e^\hbar}
\in \mathbb{Q}[x^{\pm \frac12}][[1-x]][[\hbar]]. 
\]
Now, it remains to show that this power series is of the desired form, in terms of rational functions in $x^{\frac{1}{2}}$, with denominators given by powers of the Alexander polynomial $\Delta_K(x)$. 

For this purpose, we first express the state sum as sum over states on $L$ and those on $K$:
\[
\sum_{s} \langle D_{(K,V_\infty(x)), \{(L_i, V_{m_i})\}}; s\rangle \bigg\vert_{q=e^{\hbar}}
= 
\sum_{s\vert_L} 
\qty(
\sum_{s\vert_K}
\langle D_{(K,V_\infty(x)), \{(L_i, V_{m_i})\}}; s\rangle \bigg\vert_{q=e^{\hbar}}
)
. 
\]
Note, the first sum over $s\vert_L : \mathcal{A}_L \rightarrow \mathbb{N}$ is a finite sum, since there are only finitely many possible admissible states on $L$. 
Thus, Theorem \ref{mainthm:MMR} will follow from the following two main lemmas:
\begin{lem}\label{lem:perturbed-random-walk}
For each fixed $s\vert_L$, the state sum over $K$ has an expansion of the form
\[
\sum_{s\vert_K}
\langle D_{(K,V_\infty(x)), \{(L_i, V_{m_i})\}}; s\rangle \bigg\vert_{q=e^{\hbar}} 
= 
\sum_{d\geq 0} \frac{P_d^{s\vert_L}(x)}{\Delta_K(x)^{2d+1}} \hbar^d,
\]
for some Laurent polynomials $P_d^{s\vert_L}(x) \in \mathbb{Q}[x^{\pm \frac12}]$. 
\end{lem}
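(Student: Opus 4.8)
The plan is to regard the state sum over $K$ (for a fixed admissible $s\vert_L$) as the evolution of a \emph{perturbed random walk} of the spin variable along $K$, in which the $\hbar=0$ part is a ``free'' walk that resums to the Alexander polynomial and the $\hbar$-corrections are finitely many localized perturbations. Concretely, cutting the $(1,1)$-tangle diagram $D$ into elementary pieces read from bottom to top and freezing $s\vert_L$, the sum $\sum_{s\vert_K}\langle D;s\rangle|_{q=e^\hbar}$ becomes a single matrix entry (the external arcs of $K$ pinned to $v_0$) of a product of operators acting on the $K$-strands: the $R$-matrices $R(x,x,q)^{\pm1}$ at self-crossings of $K$; the $R$-matrices $R(x,q^{m_i},q)^{\pm1}$ at crossings between $K$ and $L$, which once the adjacent $L$-spins are frozen act on a single $K$-strand by shifting the $K$-spin by a flow $k\geq 0$ determined by $s\vert_L$ and multiplying by a power series in $\hbar$ whose coefficients are polynomial in the $K$-spin (the residual Pochhammer factor of Table \ref{tab:R-matrices} depending only on the frozen data); and the cup and cap weights of Table \ref{tab:cupsandcaps}. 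I first pull out the spin-independent prefactors, which combine into one overall monomial in $x^{1/2}$ and $q^{1/4}$ — here the transcendental Gaussian prefactors $e^{(\log x)^2/(4\hbar)}$ attached to the $K$-self-crossings cancel once $K$ is drawn by a writhe-zero diagram, which is possible because $K$ is Seifert-framed, and the $x$-part of the surviving monomial produces the arguments $x^{\mathrm{lk}(L_i,K)}$ of the $\chi_{m_i}$. By Lemma \ref{lem:truncation-bound} and the discussion after it, what remains is a well-defined element of $\mathbb{Q}[x^{\pm1/2}][[1-x]][[\hbar]]$, and the task is to identify its coefficient of $\hbar^d$ with $P_d^{s\vert_L}(x)/\Delta_K(x)^{2d+1}$ for a Laurent polynomial $P_d^{s\vert_L}$.

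\emph{The free walk ($d=0$).} The crucial point is an asymmetry between the two kinds of crossings. At $\hbar=0$ the flow-$k$ part of a $K$-self-crossing \emph{survives}, carrying the factor $(1-x^{-1})^{k}$ — essentially $(1-x)^k$, not a power of $\hbar$; after organizing the $\hbar=0$ state sum by total flow this is exactly the geometric-series structure that the Burau representation at $t=x$ resums, and closing up the walk as in the proof of the MMR expansion for $K$ alone \cite{Rozansky} gives $\Delta_K(x)^{-1}$ times a Laurent polynomial in $x^{1/2}$. By contrast, a flow-$k$ crossing between $K$ and $L$ is of order $\hbar^{k}$ — its Pochhammer factor $(q^{j+1-m_i};q)_k$ is a product of $k$ terms each vanishing at $q=1$ — so at $\hbar=0$ the $K$--$L$ crossings are flow-free and leave the $K$-spin untouched: the $K$-walk is autonomous and the $K$--$L$ crossings only multiply by scalars depending on the $L$-spins and on $x$. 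Summing the resulting product of scalars along each component $L_i$ over its admissible spins decouples from the $K$-walk and yields exactly $\chi_{m_i}(x^{\mathrm{lk}(L_i,K)})$, which is the $d=0$ case.

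\emph{Higher orders.} Writing each elementary weight as its free part plus $\sum_{e\geq 1}\hbar^e(\text{correction})$, the coefficient of $\hbar^d$ in $\sum_{s\vert_K}\langle D;s\rangle$ is a finite sum over ways of inserting correction vertices of total $\hbar$-degree $d$ into the free walk, each vertex polynomial in the spins it touches. The loop (perturbative) expansion behind Rozansky's proof for $K$ alone applies: the propagator is $\Delta_K(x)^{-1}$, and inserting a power of a spin into the walk raises the power of $\Delta_K$ in the denominator by one — a spin insertion is, modulo insertions of strictly fewer spins, the Euler operator $x\,\partial_x$ on the generating functions, and $x\,\partial_x(P/\Delta_K^{\,r})$ has denominator $\Delta_K^{\,r+1}$. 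A $K$-self-crossing contributes, at order $\hbar^e$, a vertex polynomial of degree at most $2e$ in its spins (from $e$-fold expansion of the quadratic exponents $q^{(j+1/2)(j'+1/2)}$), while a $K$--$L$ crossing contributes, at order $\hbar^e$, a vertex polynomial of degree at most $e$ in the $K$-spin (its flow $k$ contributes $\hbar$ and spin-degree at equal rates, and its flow-free corrections cost one spin power per $\hbar$); in particular the $L$-vertices are strictly cheaper. Hence inserting total $\hbar$-degree $d$ costs at most $2d$ spin insertions, the denominator at $\hbar^d$ is at most $\Delta_K^{2d+1}$, and the $K$--$L$ crossings cannot push past this. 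Restoring the overall monomial gives $P_d^{s\vert_L}(x)\in\mathbb{Q}[x^{\pm1/2}]$; summing over the finitely many $s\vert_L$ and combining with the estimate of Lemma \ref{lem:truncation-bound} then yields Theorem \ref{mainthm:MMR}.

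\emph{Main obstacle.} I expect the hard part to be two-fold. First, making the ``Burau resummation'' of the free walk rigorous despite the Verma module being infinite-dimensional: one must show that the organization by flow (equivalently, by order in $1-x$) feeds only a bounded range of flows into each $\hbar^d$-coefficient and that the resulting resummation commutes with the $(1-x)$-adic limit and lands in $\mathbb{Q}[x^{\pm1/2},\Delta_K(x)^{-1}]$. Second, turning the heuristic ``each spin insertion raises the $\Delta_K$-power by one, each $\hbar$ costs at most two spin powers'' into honest bookkeeping that also tracks the fixed $K$--$L$ vertices distributed along the single loop $K$; the interplay of this count with the loop structure of $K$ is where I expect most of the technical work to lie.
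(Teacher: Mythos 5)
Your overall strategy is the same as the paper's: a Rozansky-style perturbed random walk in which the free walk resums to powers of $\Delta_K(x)$ and the $\hbar$-corrections are polynomial spin insertions of degree at most $2$ per power of $\hbar$, each insertion raising the power of $\Delta_K$ in the denominator by one. However, the proposal leaves genuine gaps at precisely the points where the proof has to do work, and one of its central mechanisms is wrong as stated. The claim that ``a spin insertion is, modulo insertions of strictly fewer spins, the Euler operator $x\,\partial_x$ on the generating functions'' does not localize: $x\,\partial_x$ applied to the resummed free walk inserts a globally weighted sum of spins (weighted by how $x$ enters each crossing), not the spin on the particular arc touched by a given correction vertex. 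To make the bookkeeping honest one must introduce independent formal parameters at each crossing --- this is exactly the role of the parametrized $\mathsf{R}$-matrices of Table \ref{tab:parametrized-R-matrices}, where the spins appear as exponents of $\alpha_c,\beta_c,\gamma_c$ and a spin insertion at crossing $c$ becomes $\alpha_c\partial_{\alpha_c}$, etc.\ (Lemma \ref{lem:diff-op}). Only then is the free sum a rational function of all the parameters with denominator $\det(I-\mathcal{B})^{1+\delta}$ (Lemma \ref{lem:free-state-sum}), so that each derivative raises the denominator exponent by one and the degree bound $\deg_\partial \mathsf{D}^{\mathrm{total}}_d \le 2d$ yields $\Delta_K^{2d+1}$.

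Two further points your plan does not address. First, with $L$ present the free walk on $K$ is not autonomous beyond leading order: the fixed $L$-spins create $\delta$ sources and sinks on $K$, so already the free sum is a sum of terms $\prod_i \mathrm{adj}(I-\mathcal{B})_{a_i,a_i'}\big/\det(I-\mathcal{B})^{1+\delta}$ --- a \emph{higher} power of $\Delta_K$ at the free level --- and the reason this does not break the $2d+1$ bound is not that ``$L$-vertices are strictly cheaper'' in spin degree, but that the numerator carries $\gamma$-degree at least $\delta$ and those $\gamma$'s specialize to quantities of order $\hbar$, compensating the extra $\delta$ in the exponent. The combinatorial input here (the bijection between states-with-jumping-data and primitive multi-cycle-paths, and the adjugate formula for the path contributions) is the content of the proof of Lemma \ref{lem:free-state-sum} and also resolves your first ``main obstacle'' about resumming the infinite-dimensional walk: the resummation is a formal geometric series $(I-\mathcal{B})^{-1}$ in the crossing parameters, compatible with the $(1-x)$-adic truncation of Lemma \ref{lem:truncation-bound}. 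Second, at a crossing whose under-strand belongs to $L$, the naive perturbative expansion of the $R$-matrix produces negative powers of $\gamma_c$, which specializes to $O(\hbar)$; this would wreck the degree count. The paper needs a separate statement (Lemma \ref{lem:diff-op-relative}), exploiting that the $L$-spins are frozen, to produce operators involving only nonnegative powers of $\gamma_c$ at such crossings. Your proposal does not notice this issue.
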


\begin{lem}\label{lem:leading-term}
The leading polynomial $P_0^{s\vert_L}(x)$ vanishes unless the spins on arcs of $L$ are constant along each strand, in which case $P_0^{s\vert_L}(x)$ is the monomial
\[
\prod_{i} x^{\mathrm{lk}(L_i,K) (\frac{m_i - 1}{2} - s(L_i))},
\]
where $s(L_i)$ denotes the spin on any arc of $L_i$. 
In particular, 
\[
\sum_{s\vert_L} P_0^{s\vert_L}(x) = \prod_{i} \chi_{m_i}(x^{\mathrm{lk}(L_i,K)}).
\]
\end{lem}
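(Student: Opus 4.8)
The plan is to reduce the lemma to a computation at $q=1$. By Lemma \ref{lem:perturbed-random-walk}, $P_0^{s\vert_L}(x)$ is $\Delta_K(x)$ times the $\hbar^0$-coefficient of the inner state sum over $K$, hence $\Delta_K(x)$ times the value of that state sum at $q=e^{\hbar}=1$. So the first task is to understand the weights of Tables \ref{tab:R-matrices} and \ref{tab:cupsandcaps} at $q=1$. After an isotopy (e.g.\ presenting $K\sqcup L$ as a braid) one may assume the $(1,1)$-tangle diagram is drawn so that every crossing has both strands locally oriented upward, so that each crossing contributes one of the four entries of Table \ref{tab:R-matrices}. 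At $q=1$ all cup and cap weights become monomials in $x$ (the $L$-colored ones become $1$), while the only non-monomial factor in $R(x_1,x_2,q)_{i,j}^{i',j'}$ (and hence, via the displayed formula, in $R^{-1}$) is $\qbin{i}{j'}_q(q^{j+1}x_2^{-1};q)_{i-j'}$, which at $q=1$ becomes $\binom{i}{j'}(1-x_2^{-1})^{i-j'}$. Running through the rows of Table \ref{tab:R-matrices}, this yields three facts at $q=1$: (a) at a crossing between two finite-dimensionally colored strands --- in particular any self-crossing of $L$ or crossing of $L_i$ with $L_j$ --- all spins pass straight through; (b) the same holds at a crossing in which $K$ (colored $V_\infty(x)$) passes over some $L_i$; (c) at a crossing in which $L_i$ passes over $K$, the weight vanishes unless the spin carried by $L_i$ is non-increasing in the direction of $L_i$, the drop being transferred to the spin on $K$.

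Granting (a)--(c), the vanishing assertion follows at once: traversing the loop $L_i$ along its orientation, the spin it carries can only stay constant (at cups, caps, crossings of type (a), and under-crossings of type (b)) or decrease (at over-crossings of type (c)), so it must in fact be constant along $L_i$. Hence if $s\vert_L$ is not constant along some $L_i$, then for this $s\vert_L$ some crossing has weight vanishing at $q=1$ for every $s\vert_K$, so the $K$-state sum vanishes at $q=1$ and $P_0^{s\vert_L}(x)=0$.

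Next I would treat the case where $s\vert_L$ is constant on each $L_i$, with value $s(L_i)$. By (b), (c) and conservation of spin, at $q=1$ the spin on $K$ also passes straight through every $K$--$L_i$ crossing, so in the sum over $s\vert_K$ all $L$-related weights factor out as a scalar, leaving exactly the $q=1$ value of the MMR state sum of the diagram obtained by erasing $L$, namely $1/\Delta_K(x)$ by the classical MMR theorem --- equivalently, by Lemma \ref{lem:perturbed-random-walk} applied to the empty link. Computing with the prefactors of Table \ref{tab:R-matrices}, one finds that a single $K$--$L_i$ crossing $c$, with all spins passing through and $L_i$-spin $s(L_i)$, contributes the monomial $x^{\,\mathrm{sign}(c)\left(\frac{m_i-1}{4}-\frac{s(L_i)}{2}\right)}$ at $q=1$ --- crucially, independently of the spin on $K$ at $c$ --- while all other $L$-related weights equal $1$ there. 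Multiplying over all crossings of $L_i$ with $K$ and using $\sum_c\mathrm{sign}(c)=2\,\mathrm{lk}(L_i,K)$ gives $P_0^{s\vert_L}(x)=\prod_i x^{\,\mathrm{lk}(L_i,K)\left(\frac{m_i-1}{2}-s(L_i)\right)}$. Finally, summing over the finitely many constant states $s\vert_L$ and using $\sum_{s=0}^{m-1}t^{\frac{m-1}{2}-s}=\chi_m(t)$ with $t=x^{\mathrm{lk}(L_i,K)}$ yields $\sum_{s\vert_L}P_0^{s\vert_L}(x)=\prod_i\chi_{m_i}(x^{\mathrm{lk}(L_i,K)})$.

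I expect the main obstacle to be pinning down fact (c) and the associated monomial computation: one must track the orientation and crossing-sign conventions of Table \ref{tab:R-matrices} carefully enough to verify both that the spin on $L_i$ is genuinely one-directionally monotone along $L_i$ --- so the closed-loop argument forces constancy --- and that the surviving $K$--$L_i$ crossing weight at $q=1$ is a monomial independent of the $K$-spin. This independence is precisely what lets the line defect "factor out" of the state sum as the abelian-holonomy monomial $x^{\,\mathrm{lk}(L_i,K)\left(\frac{m_i-1}{2}-s(L_i)\right)}$, matching the expectation from the reducible $\mathrm{SL}_2$ flat connection on $S^3\setminus K$.
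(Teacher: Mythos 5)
Your proposal is correct and follows essentially the same route as the paper: identify the $\hbar^0$ (equivalently $q=1$) behavior of the $R$-matrix entries from Table \ref{tab:R-matrices}, force constancy of the $L$-spins, factor the $K$--$L$ crossing weights out as the monomial $\prod_i x^{\mathrm{lk}(L_i,K)(\frac{m_i-1}{2}-s(L_i))}$, and identify the remaining $K$-only factor with $1/\Delta_K(x)$ via the classical MMR leading term before summing the characters. Your explicit closed-loop monotonicity argument for constancy (needed because at $L$-over-$K$ crossings the weight does \emph{not} vanish at $q=1$ when the $L$-spin drops) is a point the paper leaves implicit, and your sign/orientation checks match its computation.
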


We will prove the second lemma first, since it is easier. 
\begin{proof}[Proof of Lemma \ref{lem:leading-term}]
Observe that
\begin{align*}
x^{\frac{m}{4}} R(x,q^m,q)_{i,j}^{i',j'} &= 
\begin{cases}
x^{\frac{m-1-2j}{4}} + O(\hbar^1) &\text{if }i-j' = 0 \\
O(\hbar^1) &\text{if }i-j' > 0 \\
\end{cases}, \\
x^{-\frac{m}{4}} R^{-1}(q^m,x,q)_{i,j}^{i',j'} &= 
\begin{cases}
x^{-\frac{m-1-2i}{4}} + O(\hbar^1) &\text{if }j-i' = 0 \\
O(\hbar^1) &\text{if }j-i' > 0 \\
\end{cases}, \\
q^{\frac{m_1 m_2}{4}}R(q^{m_1}, q^{m_2},q)_{i,j}^{i',j'} &=
\begin{cases}
1 + O(\hbar^1) &\text{if }i-j'=0 \\
O(\hbar^1) &\text{if }i-j' > 0
\end{cases}, \\
q^{-\frac{m_1 m_2}{4}} R^{-1}(q^{m_1},q^{m_2},q)_{i,j}^{i',j'} &= 
\begin{cases}
1 + O(\hbar^1) &\text{if }j-i' = 0 \\
O(\hbar^1) &\text{if }j-i' > 0 \\
\end{cases}.
\end{align*}
It follows that the only states that contribute to the $\hbar^0$-term are those for which the spins on arcs of $L$ are constant, say $s(L_i) \in \{0, 1, \cdots, m_i-1\}$, along each component $L_i$. 
If $s\vert_L$ is such a state on $L$, and
$c_{+}(L_i,K)$ and $c_{-}(L_i,K)$ denote the number of positive and negative crossings between $L_i$ and $K$, 
then the $\hbar^0$-term of the state sum over $K$ is given by the product of two factors, the first of which is
\[
\prod_{i} (x^{\frac{m_i-1-2s(L_i)}{4}})^{c_{+}(L_i,K)-c_{-}(L_i,K)} = \prod_{i} x^{\mathrm{lk}(L_i,K) (\frac{m_i - 1}{2} - s(L_i))},
\]
which is the weight associated to the crossings involving any strand of $L$, 
and the second factor is the state sum purely on $K$, whose $\hbar^0$-term is given by the leading term $\frac{1}{\Delta_K(x)}$ of the standard MMR expansion. 
That is, 
\[
\sum_{s\vert_K}
\langle D_{(K,V_\infty(x)), \{(L_i, V_{m_i})\}}; s\rangle \bigg\vert_{q=e^{\hbar}} 
= 
\frac{\prod_{i} x^{\mathrm{lk}(L_i,K) (\frac{m_i - 1}{2} - s(L_i))}}{\Delta_K(x)} + O(\hbar^1),
\]
and therefore
\[
\sum_{s\vert_L}
\sum_{s\vert_K}
\langle D_{(K,V_\infty(x)), \{(L_i, V_{m_i})\}}; s\rangle \bigg\vert_{q=e^{\hbar}} 
= 
\frac{\prod_{i} \chi_{m_i}(x^{\mathrm{lk}(L_i,K)})}{\Delta_K(x)} + O(\hbar^1).
\]
\end{proof}

Now it remains to prove the first lemma (Lemma \ref{lem:perturbed-random-walk}), which we do in the next subsection.

\subsection{Proof of Lemma \ref{lem:perturbed-random-walk}}
The main idea is to mimic Rozansky's proof \cite{Rozansky} of the MMR conjecture. 
For this purpose, we will first consider the ``free state sum,'' a simpler state sum where we replace all the $R$-matrices involving a strand of $K$ by the \emph{parametrized $\mathsf{R}$-matrices}, which are summarized in Table \ref{tab:parametrized-R-matrices}. 
We use different formal parameters $\alpha_c,\beta_c,\gamma_c$ for each crossing $c$, so there would be a total of $3N$ such parameters, if $N$ is the number of crossings involving a strand of $K$.\footnote{
For crossings $c$ involving both $K$ and $L$, the parameters $\alpha_c$ and $\gamma_c$ will not play any role, so we could have just used the remaining $3N_{KK} + N_{KL}$ parameters, where $N_{KK}$ (resp. $N_{KL}$) is the number of crossings between strands of $K$ (resp. between $K$ and $L$), but we are writing it in a uniform way for simplicity of notation.
} 
We will denote the set of all those parameters by symbols $\{\alpha\}, \{\beta\}, \{\gamma\}$. 
To this end, we define the \emph{free state sum} to be
\[
\sum_{s\vert_K}
\langle D_{(K,V_\infty(x)), \{(L_i, V_{m_i})\}}; \{\alpha\}, \{\beta\}, \{\gamma\}; s\rangle^{\mathrm{free}},
\]
where 
$\langle D_{(K,V_\infty(x)), \{(L_i, V_{m_i})\}}; \{\alpha\}, \{\beta\}, \{\gamma\}; s\rangle^{\mathrm{free}}$ 
denotes the weight of the state $s$ in this free state sum, given by the product of $R$-matrices (for crossings between strands of $L$), parametrized $\mathsf{R}$-matrices (for crossings involving a strand of $K$), cups and caps; 
this weight will be a monomial in the $3N$ parameters $\{\alpha\}, \{\beta\}, \{\gamma\}$, with coefficient a monomial in $\mathbb{Z}[x^{\pm\frac12}, q^{\pm \frac14}]$. 

\begin{table}
\centering
\begin{tabular}{|c|c||c|c|}
\hline
$
\vcenter{\hbox{
\begin{tikzpicture}[scale=0.7]
\draw[blue, ->, thick] (1, 0) -- (0, 1);
\draw[white, line width=5] (0, 0) -- (1, 1);
\draw[blue, ->, thick] (0, 0) -- (1, 1);
\node[blue, left] at (0, 0){$i$};
\node[blue, right] at (1, 0){$j$};
\node[blue, left] at (0, 1){$i'$};
\node[blue, right] at (1, 1){$j'$};
\node[blue, below] at (-0.2, -0.3){\tiny $V_\infty(x)$};
\node[blue, below] at (1.2, -0.3){\tiny $V_\infty(x)$};
\end{tikzpicture}
}}
$
 & $q^{\frac14} x^{-\frac12} \mathsf{R}(\alpha,\beta,\gamma)_{i,j}^{i',j'}$ 
&
$
\vcenter{\hbox{
\begin{tikzpicture}[scale=0.7]
\draw[blue, ->, thick] (0, 0) -- (1, 1);
\draw[white, line width=5] (1, 0) -- (0, 1);
\draw[blue, ->, thick] (1, 0) -- (0, 1);
\node[blue, left] at (0, 0){$i$};
\node[blue, right] at (1, 0){$j$};
\node[blue, left] at (0, 1){$i'$};
\node[blue, right] at (1, 1){$j'$};
\node[blue, below] at (-0.2, -0.3){\tiny $V_\infty(x)$};
\node[blue, below] at (1.2, -0.3){\tiny $V_\infty(x)$};
\end{tikzpicture}
}}
$
 & $q^{-\frac14} x^{\frac12} \mathsf{R}^{-1}(\alpha,\beta,\gamma)_{i,j}^{i',j'}$ 
\\
\hline
$
\vcenter{\hbox{
\begin{tikzpicture}[scale=0.7]
\draw[red, ->] (1, 0) -- (0, 1);
\draw[white, line width=5] (0, 0) -- (1, 1);
\draw[blue, ->, thick] (0, 0) -- (1, 1);
\node[blue, left] at (0, 0){$i$};
\node[red, right] at (1, 0){$j$};
\node[red, left] at (0, 1){$i'$};
\node[blue, right] at (1, 1){$j'$};
\node[blue, below] at (-0.2, -0.3){\tiny $V_\infty(x)$};
\node[red, below] at (1.2, -0.3){\tiny $V_m$};
\end{tikzpicture}
}}
$
 & $x^{\frac{m-1}{4}}q^{-\frac{m}{4}} \mathsf{R}(\alpha,\beta,\gamma)_{i,j}^{i',j'}$ 
&
$
\vcenter{\hbox{
\begin{tikzpicture}[scale=0.7]
\draw[blue, ->, thick] (0, 0) -- (1, 1);
\draw[white, line width=5] (1, 0) -- (0, 1);
\draw[red, ->] (1, 0) -- (0, 1);
\node[blue, left] at (0, 0){$i$};
\node[red, right] at (1, 0){$j$};
\node[red, left] at (0, 1){$i'$};
\node[blue, right] at (1, 1){$j'$};
\node[blue, below] at (-0.2, -0.3){\tiny $V_\infty(x)$};
\node[red, below] at (1.2, -0.3){\tiny $V_m$};
\end{tikzpicture}
}}
$
 & $x^{-\frac{m-1}{4}} q^{\frac{m}{4}} \mathsf{R}^{-1}(\alpha,\beta,\gamma)_{i,j}^{i',j'}$ 
 \\
\hline
$
\vcenter{\hbox{
\begin{tikzpicture}[scale=0.7]
\draw[blue, ->, thick] (1, 0) -- (0, 1);
\draw[white, line width=5] (0, 0) -- (1, 1);
\draw[red, ->] (0, 0) -- (1, 1);
\node[red, left] at (0, 0){$i$};
\node[blue, right] at (1, 0){$j$};
\node[blue, left] at (0, 1){$i'$};
\node[red, right] at (1, 1){$j'$};
\node[red, below] at (-0.2, -0.3){\tiny $V_m$};
\node[blue, below] at (1.2, -0.3){\tiny $V_\infty(x)$};
\end{tikzpicture}
}}
$
 & $x^{\frac{m-1}{4}} q^{-\frac{m}{4}} \mathsf{R}(\alpha,\beta,\gamma)_{i,j}^{i',j'}$ 
&
$
\vcenter{\hbox{
\begin{tikzpicture}[scale=0.7]
\draw[red, ->] (0, 0) -- (1, 1);
\draw[white, line width=5] (1, 0) -- (0, 1);
\draw[blue, ->, thick] (1, 0) -- (0, 1);
\node[red, left] at (0, 0){$i$};
\node[blue, right] at (1, 0){$j$};
\node[blue, left] at (0, 1){$i'$};
\node[red, right] at (1, 1){$j'$};
\node[red, below] at (-0.2, -0.3){\tiny $V_m$};
\node[blue, below] at (1.2, -0.3){\tiny $V_\infty(x)$};
\end{tikzpicture}
}}
$
 & $x^{-\frac{m-1}{4}} q^{\frac{m}{4}} \mathsf{R}^{-1}(\alpha,\beta,\gamma)_{i,j}^{i',j'}$
 \\
 \hline
\multicolumn{4}{|c|}{
{$
\begin{aligned}
\mathsf{R}(\alpha,\beta,\gamma)_{i,j}^{i', j'} &:= \delta_{i+j, i'+j'} \binom{i}{j'} \alpha^j \beta^{j'} \gamma^{i-j'}, \\
\mathsf{R}^{-1}(\alpha,\beta,\gamma)_{i,j}^{i',j'} &:= \mathsf{R}(\alpha,\beta,\gamma)_{j, i}^{j', i'}.
\end{aligned}
$}
} \\
\hline
\end{tabular}
\caption{Summary of parametrized $\mathsf{R}$-matrices; $\mathsf{R}^{-1}$ is just a notation and is \emph{not} the inverse of $\mathsf{R}$.}
\label{tab:parametrized-R-matrices}
\end{table}

The virtue of the free state sum is that it can be interpreted as a random walk model, analogous to that of \cite{LinWang}, which allows us to compute the state sum explicitly, in terms of the transition matrix $\mathcal{B}$ of the random walk -- we will get back to this point later. 
At the same time, the usual $R$-matrices can be seen as a ``perturbation'' of these parametrized $\mathsf{R}$-matrices in the sense of following lemmas, whose proofs are given in Appendix \ref{subsec:diff-op-lemmas}:

\begin{lem}\label{lem:diff-op}
There exist differential operators $\mathsf{D}_d \in \mathbb{Q}[\alpha^{\frac12},\beta^{\frac12},\gamma^{\pm 1}, \partial_{\alpha}, \partial_{\beta}, \partial_{\gamma}]$ of degree $\deg_{\partial} \mathsf{D}_d \leq 2d$, with $\mathsf{D}_0 = 1$, such that
\begin{align*}
R(x_1, x_2, e^{\hbar})_{i,j}^{i', j'} &= 
x_1^{-\frac14}x_2^{-\frac14}
\sum_{d\geq 0} \hbar^d \mathsf{D}_d \cdot 
\mathsf{R}(\alpha, \beta, \gamma)_{i,j}^{i',j'} \bigg\vert_{\substack{\alpha = x_1^{-\frac{1}{2}} \\ \beta = x_2^{-\frac{1}{2}} \\ \gamma = x_1^{-\frac{1}{4}} x_2^{\frac{1}{4}} (1-x_2^{-1})}}, 
\\
R^{-1}(x_1, x_2, e^{\hbar})_{i,j}^{i', j'} 
&= x_1^{\frac14} x_2^{\frac14} 
\sum_{d\geq 0} (-\hbar)^d \mathsf{D}_d \cdot \mathsf{R}^{-1}(\alpha,\beta,\gamma)_{i,j}^{i',j'} \bigg\vert_{\substack{
\alpha = x_2^{\frac12} \\
\beta = x_1^{\frac12} \\
\gamma = x_1^{-\frac14} x_2^{\frac14} (1-x_1)
}} .
\end{align*}
\end{lem}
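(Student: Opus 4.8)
The plan is to prove the first ($R$) identity by a direct $\hbar$-expansion and then obtain the second ($R^{-1}$) one for free from the symmetry $R^{-1}(x_1,x_2,q)_{i,j}^{i',j'}=R(x_2^{-1},x_1^{-1},q^{-1})_{j,i}^{j',i'}$ and $\mathsf{R}^{-1}(\alpha,\beta,\gamma)_{i,j}^{i',j'}=\mathsf{R}(\alpha,\beta,\gamma)_{j,i}^{j',i'}$ built into the definitions. First I would rewrite the $R$-matrix of Table \ref{tab:R-matrices} (the displayed formula, without the Gaussian prefactor mentioned in the caption) as
\[
R(x_1,x_2,e^{\hbar})_{i,j}^{i',j'}=x_1^{-\tfrac14}x_2^{-\tfrac14}\cdot\Bigl(\mathsf{R}(\alpha,\beta,\gamma)_{i,j}^{i',j'}\Big|_{\alpha=x_1^{-1/2},\,\beta=x_2^{-1/2},\,\gamma=x_1^{-1/4}x_2^{1/4}(1-x_2^{-1})}\Bigr)\cdot F(i,j,j';x_2,\hbar),
\]
where, since $\mathsf{R}(\alpha,\beta,\gamma)_{i,j}^{i',j'}=\binom{i}{j'}\alpha^j\beta^{j'}\gamma^{i-j'}$, cancelling the monomial prefactors forces
\[
F(i,j,j';x_2,\hbar)=\frac{q^{(j+\frac12)(j'+\frac12)}\,\qbin{i}{j'}_q\,(q^{j+1}x_2^{-1};q)_{i-j'}}{\binom{i}{j'}\,(1-x_2^{-1})^{i-j'}}\bigg|_{q=e^{\hbar}},
\]
a power series in $\hbar$ with $F|_{\hbar=0}=1$ (consistent with $\mathsf{D}_0=1$). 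It then suffices to produce, for each $d\ge1$, an operator $\mathsf{D}_d$ in the stated ring with $\deg_\partial\mathsf{D}_d\le 2d$ such that $(\mathsf{D}_d\cdot\mathsf{R})\big|_{\mathrm{subst}}=F_d\cdot\bigl(\mathsf{R}\big|_{\mathrm{subst}}\bigr)$, where $F_d$ is the $\hbar^d$-coefficient of $F$.

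The crux is the claim that $F_d$, regarded as a function of the spins, is a polynomial in $i,j,j'$ of total degree $\le 2d$ whose coefficients are polynomials in $(x_2-1)^{-1}$. I would prove this by passing to logarithms: writing $\log F=\sum_{m\ge1}\hbar^m L_m$, the $\hbar^d$-coefficient of $F=\exp(\sum_m\hbar^m L_m)$ is a sum of products $L_{m_1}\cdots L_{m_r}$ over partitions $d=m_1+\dots+m_r$, so it is enough to show each $L_m$ is a polynomial in $i,j,j'$ of degree $\le 2m$ with coefficients in $\mathbb{Q}[(x_2-1)^{-1}]$. The factor $q^{(j+\frac12)(j'+\frac12)}=e^{\hbar(j+\frac12)(j'+\frac12)}$ contributes only to $L_1$, a degree-$2$ polynomial. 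For the quantum binomial, the identity $[n]_q=e^{(n-1)\hbar/2}\sinh(n\hbar/2)/\sinh(\hbar/2)$ at $q=e^{\hbar}$ shows $\log([n]_q/n)$ has $\hbar^m$-coefficient a polynomial in $n$ of degree $\le m$; since the logarithm of $\qbin{i}{j'}_q/\binom{i}{j'}$ equals $\sum_{l=i-j'+1}^{i}\log([l]_q/l)-\sum_{l=1}^{j'}\log([l]_q/l)$, and a finite sum of values of a polynomial is again a polynomial of one higher degree in the summation endpoints, its $\hbar^m$-coefficient is a polynomial in $i,j'$ of degree $\le m+1\le 2m$. The $q$-Pochhammer is handled identically: from $\tfrac{1-x_2^{-1}q^t}{1-x_2^{-1}}=1-\tfrac{1}{x_2-1}(e^{t\hbar}-1)$ one gets that the logarithm of $(q^{j+1}x_2^{-1};q)_{i-j'}/(1-x_2^{-1})^{i-j'}$ is $\sum_{l=0}^{i-j'-1}\log\!\bigl(1-\tfrac{1}{x_2-1}(e^{(j+1+l)\hbar}-1)\bigr)$, whose $\hbar^m$-coefficient is a polynomial in $(x_2-1)^{-1}$ of degree $\le m$ times $\sum_{l=0}^{i-j'-1}(j+1+l)^m$, again a polynomial in $i,j,j'$ of degree $\le m+1$.

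Granting the claim, I would turn $F_d$ into an operator using the commuting Euler operators $\hat\jmath:=\alpha\partial_\alpha$, $\hat k:=\beta\partial_\beta$, $\hat\ell:=\gamma\partial_\gamma$, which act diagonally on $\mathsf{R}(\alpha,\beta,\gamma)_{i,j}^{i',j'}=\binom{i}{j'}\alpha^j\beta^{j'}\gamma^{i-j'}$ with eigenvalues $j$, $j'$, $i-j'$ respectively, together with the observation that the monomial $\gamma^{-1}\alpha^{1/2}\beta^{3/2}$ specializes under the above substitution to the scalar $(x_2-1)^{-1}$. Expanding $F_d=\sum_a c_a\,(x_2-1)^{-e_a}\,j^{p_a}(j')^{q_a}(i-j')^{r_a}$ with $p_a+q_a+r_a\le 2d$, one checks that $\mathsf{D}_d:=\sum_a c_a\,\gamma^{-e_a}\alpha^{e_a/2}\beta^{3e_a/2}\,\hat\jmath^{\,p_a}\hat k^{\,q_a}\hat\ell^{\,r_a}$ lies in $\mathbb{Q}[\alpha^{1/2},\beta^{1/2},\gamma^{\pm1},\partial_\alpha,\partial_\beta,\partial_\gamma]$, has $\deg_\partial\mathsf{D}_d\le 2d$, and satisfies $(\mathsf{D}_d\cdot\mathsf{R})\big|_{\mathrm{subst}}=F_d\cdot\bigl(\mathsf{R}\big|_{\mathrm{subst}}\bigr)$, as needed; together with $\mathsf{D}_0=1$ this gives the $R$-identity.

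Finally, applying the now-proven $R$-identity with $(x_1,x_2,\hbar)\mapsto(x_2^{-1},x_1^{-1},-\hbar)$ and indices $(i,j,i',j')\mapsto(j,i,j',i')$ yields the $R^{-1}$-identity with the very same operators $\mathsf{D}_d$, the sign $(-\hbar)^d$ arising from $q^{-1}=e^{-\hbar}$. The main obstacle is precisely the claim of the second paragraph — both the polynomiality of $F_d$ in the spins and the bound $\deg F_d\le 2d$ — which, after taking logarithms of the $q$-deformed factors, reduces to the elementary but essential fact that a finite sum of values of a polynomial is again a polynomial, of one higher degree, in the summation endpoints.
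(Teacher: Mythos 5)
Your proposal is correct, and its overall architecture coincides with the paper's: both isolate the ratio of $R(x_1,x_2,e^{\hbar})_{i,j}^{i',j'}$ to the specialized $x_1^{-\frac14}x_2^{-\frac14}\,\mathsf{R}|_{\mathrm{subst}}$, show that its $\hbar^d$-coefficient is a polynomial of total degree $\leq 2d$ in the eigenvalues $j$, $j'$, $i-j'$ with coefficients realizable by the monomial $\alpha^{\frac12}\beta^{\frac32}\gamma^{-1}\mapsto (x_2-1)^{-1}$, promote those eigenvalues to the Euler operators $\alpha\partial_\alpha$, $\beta\partial_\beta$, $\gamma\partial_\gamma$, and finally deduce the $R^{-1}$ case from the symmetry $R^{-1}(x_1,x_2,q)_{i,j}^{i',j'}=R(x_2^{-1},x_1^{-1},q^{-1})_{j,i}^{j',i'}$ — exactly as in Appendix~\ref{subsec:diff-op-lemmas}. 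The one genuine difference is in how the polynomiality and degree bounds are obtained: the paper imports them wholesale from Rozansky's expansion lemmas (Lemmas~\ref{lem:qbin} and~\ref{lem:Pmn}, with the explicit polynomials $Q_b$ and $T_{n,m}$ and the bookkeeping $2a+2b+n+(n+m)\leq 2d$), whereas you re-derive them from scratch by taking logarithms of the three factors $q^{(j+\frac12)(j'+\frac12)}$, $\qbin{i}{j'}_q/\binom{i}{j'}$, and the normalized Pochhammer, and reducing everything to the Faulhaber-type fact that a finite sum of a degree-$m$ polynomial is a degree-$(m+1)$ polynomial in the endpoints. Your route is self-contained and arguably cleaner for establishing the bare degree bound $\leq 2d$; the paper's route has the advantage of producing the explicit operators $\mathsf{D}_d$ term by term (which it then reuses, e.g.\ in Corollary~\ref{cor:diff-op-inversion} to argue that the same operators survive inversion of spins — a point your exponential-of-logs packaging would make slightly less transparent, though the underlying polynomial identities extend to negative spins just as well).
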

\begin{lem}\label{lem:diff-op-relative}
Once $\underline{i}', \underline{j} \in \{0, 1, \cdots, m-1\}$ are fixed, there exist differential operators $\mathsf{D}'_d \in \mathbb{Q}[\beta,\partial_\beta]$ of degree $\deg_\partial \mathsf{D}'_d \leq d$ such that
\[
R(x, q^m, e^{\hbar})_{i,\underline{j}}^{\underline{i}', j'} = 
x^{-\frac14}q^{-\frac{m}{4}} \sum_{d\geq 0} \hbar^d \mathsf{D}'_d \cdot 
\mathsf{R}(\alpha, \beta, \gamma)_{i,\underline{j}}^{\underline{i}',j'} \bigg\vert_{\substack{\alpha = x^{-\frac{1}{2}} \\ \beta = q^{-\frac{m}{2}} \\ \gamma = x^{-\frac{1}{4}} q^{\frac{m}{4}} (1-q^{-m})}}.
\]
Likewise, once $\underline{i}, \underline{j}' \in \{0, 1, \cdots, m-1\}$ are fixed, there exist differential operators $\mathsf{D}''_d \in \mathbb{Q}[\beta,\partial_\beta]$ of degree $\deg_\partial \mathsf{D}''_d \leq d$ such that
\[
R^{-1}(q^m,x,e^{\hbar})_{\underline{i},j}^{i', \underline{j}'} = x^{\frac14} q^{\frac{m}{4}} \sum_{d\geq 0} \hbar^d \mathsf{D}''_d \cdot \mathsf{R}^{-1}(\alpha,\beta,\gamma)_{\underline{i},j}^{i', \underline{j}'} \bigg\vert_{\substack{
\alpha = x^{\frac12}\\
\beta = q^{\frac{m}{2}}\\
\gamma = x^{\frac14}q^{-\frac{m}{4}}(1-q^m)
}}.
\]
\end{lem}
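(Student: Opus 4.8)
The plan is to expand the mixed $R$-matrix entry directly and recognize every $\hbar$-correction as a polynomial in the single ``number operator'' $\beta\partial_\beta$ acting on the monomial. It suffices to treat the first identity, since the second follows from the symmetry $R^{-1}(x_1,x_2,q)_{i,j}^{i',j'} = R(x_2^{-1},x_1^{-1},q^{-1})_{j,i}^{j',i'}$ together with $\mathsf{R}^{-1}(\alpha,\beta,\gamma)_{i,j}^{i',j'} = \mathsf{R}(\alpha,\beta,\gamma)_{j,i}^{j',i'}$: applying these to $R^{-1}(q^m,x,e^{\hbar})_{\underline{i},j}^{i',\underline{j}'}$ turns it into an instance of the first identity with $(x,q)$ replaced by $(x^{-1},q^{-1})$ and the indices transposed, so the operators $\mathsf{D}''_d$ are obtained from the $\mathsf{D}'_d$ under the same substitution.

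First I would exploit the delta constraint. Writing out $R(x,q^m,q)_{i,\underline{j}}^{\underline{i}',j'}$ from Table \ref{tab:R-matrices} and imposing $i+\underline{j}=\underline{i}'+j'$, the jump $i-j'=\underline{i}'-\underline{j}=:k$ is \emph{fixed} once $\underline{i}',\underline{j}$ are chosen; hence $i=j'+k$, the Pochhammer symbol $(q^{\underline{j}+1}x_2^{-1};q)_{i-j'}=(q^{\underline{j}+1-m};q)_k$ is a finite $j'$-independent product, and $\qbin{i}{j'}_q=\qbin{j'+k}{k}_q$. The only free index is therefore $j'$, and I would isolate its dependence into three pieces: a factor $\beta^{j'}$ coming from $(q^m)^{-j'/2}$ with $\beta=q^{-m/2}$; the factor $q^{(\underline{j}+\frac12)j'}=e^{\hbar(\underline{j}+\frac12)j'}$; and the $q$-binomial $\qbin{j'+k}{k}_q$.

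The next step separates the classical binomial from its deformation. Using $[N]_q = N\cdot(1+O(\hbar))$ with the order-$\hbar^r$ coefficient a polynomial in $N$ of degree $\le r$, I would write $\qbin{j'+k}{k}_q=\binom{j'+k}{k}\,G(j',\hbar)$, where $\binom{j'+k}{k}$ is exactly the binomial in $\mathsf{R}(\alpha,\beta,\gamma)_{i,\underline{j}}^{\underline{i}',j'}$ and $G(j',\hbar)\in\mathbb{Q}[[\hbar]][j']$ has the crucial property that its $\hbar^d$-coefficient has $j'$-degree $\le d$ (being a product of $k$ fixedly many factors of this type). Combined with $e^{\hbar(\underline{j}+\frac12)j'}$, whose $\hbar^n$-coefficient has $j'$-degree exactly $n$, the full $j'$-dependent correction becomes a series $P(j',\hbar)=\sum_{d\ge0}\hbar^d P_d(j')$ with $P_d\in\mathbb{Q}[j']$ and $\deg_{j'}P_d\le d$. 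It then remains to check that the $j'$-independent scalars match $\mathsf{R}$ after extracting the prefactor $x^{-\frac14}q^{-\frac m4}$ and the substitution values $\alpha=x^{-\frac12},\gamma=x^{-\frac14}q^{\frac m4}(1-q^{-m})$: the $x$-powers cancel directly, and the $q^m$-powers cancel precisely because $k=\underline{i}'-\underline{j}$, while $(q^{\underline{j}+1-m};q)_k/(1-q^{-m})^k$ is a genuine element of $\mathbb{Q}[[\hbar]]$. Finally, since $(\beta\partial_\beta)^n\beta^{j'}=(j')^n\beta^{j'}$, setting $\mathsf{D}'_d:=P_d(\beta\partial_\beta)\in\mathbb{Q}[\beta,\partial_\beta]$ gives $\deg_\partial\mathsf{D}'_d=\deg_{j'}P_d\le d$ and reproduces $P_d(j')\beta^{j'}$, which is exactly the asserted identity.

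The main obstacle I anticipate is the uniform degree bound $\deg_{j'}P_d\le d$: it is what keeps the operators inside $\mathbb{Q}[\beta,\partial_\beta]$ with $\deg_\partial\le d$, rather than requiring $\partial_\alpha,\partial_\gamma$ or degree $2d$ as in Lemma \ref{lem:diff-op}, and establishing it demands the careful $\hbar$-expansion of $\qbin{j'+k}{k}_q$ with explicit control of how the $j'$-degree grows with the $\hbar$-order. A secondary subtlety is the $0^k/0^k$ behavior of $(q^{\underline{j}+1-m};q)_k/(1-q^{-m})^k$, which must be confirmed to vanish to exact order $\hbar^k$ in both numerator and denominator before the matching of scalars can be asserted.
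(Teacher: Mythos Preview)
Your proposal is correct and follows essentially the same route as the paper's proof: exploit the delta constraint to fix the jump $k=\underline{i}'-\underline{j}$ so that only $j'$ is free, expand $\qbin{j'+k}{k}_q$ and $q^{(\underline{j}+\frac12)(j'+\frac12)}$ into $\hbar$-series whose coefficients are polynomials in $j'$ of degree $\le d$, and then replace $j'$ by the Euler operator $\beta\partial_\beta$. The ``main obstacle'' you flag --- the uniform bound $\deg_{j'}P_d\le d$ coming from the $q$-binomial --- is exactly what the paper dispatches by citing Rozansky's Lemma~\ref{lem:qbin}, and your $0^k/0^k$ subtlety is handled in the paper simply by writing $(q^{\underline{j}+1-m};q)_k = (1-q^{-m})^k\cdot\prod_{\underline{j}+1\le l\le \underline{i}'}\frac{1-q^{l-m}}{1-q^{-m}}$ with the ratio a unit in $\mathbb{Q}[[\hbar]]$.
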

Whenever the under-strand of a crossing is a strand of $K$, specialization of Lemma \ref{lem:diff-op} at $x_2 = x$ and $x_1 = x$ (resp. $x_1 = q^m$) if the over-strand is a strand of $K$ (resp. strand of $L$ colored by $V_m$)
gives the desired expressions for the corresponding $R$-matrices. 
When the under-strand is a strand of $L$ (say, colored by $V_m$), however, we can't directly specialize Lemma \ref{lem:diff-op} at $x_2 = q^m$, since the differential operator $\mathsf{D}_d$ may contain negative powers of $\gamma$, which is of order $O(\hbar^{1})$ in this specialization. 
For this reason, when the under-strand is a strand of $L$, we use Lemma \ref{lem:diff-op-relative} instead, making use of the fact that we are fixing the spins on $L$, to choose differential operators (dependent on the spins on $L$) that do not involve any negative powers of $\gamma$. 

Combining the differential operators in Lemmas \ref{lem:diff-op} and \ref{lem:diff-op-relative}, we have:
\begin{cor}
There exist differential operators $\mathsf{D}^{\mathrm{total}}_d$ of degree $\deg_{\partial} \mathsf{D}^{\mathrm{total}}_d \leq 2d$ in $3N$ parameters $\{\alpha\}, \{\beta\}, \{\gamma\}$, involving only non-negative powers of $\gamma_c$ whenever the under-strand of the crossing $c$ is $L$, such that
\begin{align*}
&\sum_{s\vert_K}
\langle D_{(K,V_\infty(x)), \{(L_i, V_{m_i})\}}; s\rangle \bigg\vert_{q=e^{\hbar}}\\ 
&= 
\sum_{d\geq 0} \hbar^d
\mathsf{D}^{\mathrm{total}}_d
\cdot
\sum_{s\vert_K}
\langle D_{(K,V_\infty(x)), \{(L_i, V_{m_i})\}}; \{\alpha\}, \{\beta\}, \{\gamma\}; s\rangle^{\mathrm{free}}
\bigg\vert_{\{\alpha\},\{\beta\},\{\gamma\}}
,
\end{align*}
where the parameters are specialized as in Lemmas \ref{lem:diff-op} and \ref{lem:diff-op-relative}. 
\end{cor}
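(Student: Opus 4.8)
First I would reduce the Corollary to a purely combinatorial bookkeeping statement: because the differential operators attached to distinct crossings act on disjoint sets of formal variables and hence commute, a product over all $K$-touching crossings of expansions of the shape $\sum_{d_c\ge0}\hbar^{d_c}(\cdots)$ automatically collapses into a single sum graded by the total degree $d=\sum_c d_c$. The only real content, then, is to supply the per-crossing expansions and to check that they may be multiplied together without spoiling the $\hbar$-grading or introducing negative powers of the ``bad'' parameters $\gamma_c$.

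Then I would fix $s\vert_L$ and write $\langle D_{(K,V_\infty(x)),\{(L_i,V_{m_i})\}};s\rangle\big\vert_{q=e^\hbar}$ as the product of its local factors: cup and cap weights, $R^{\pm1}$-weights at $L$-$L$ crossings, and $R^{\pm1}$-weights at crossings touching $K$. Only the last group depends on $\hbar$. At a $K$-touching crossing $c$ whose under-strand lies on $K$, I substitute Lemma \ref{lem:diff-op}, taking $x_2=x$ and $x_1=x$ or $x_1=q^{m}$ according as the over-strand lies on $K$ or on a component of $L$ colored by $V_m$; this expresses the local weight as $\sum_{d_c\ge0}\hbar^{d_c}\,\mathsf{D}^{(c)}_{d_c}\cdot\mathsf{R}^{\pm1}(\alpha_c,\beta_c,\gamma_c)_\bullet$ with $\deg_\partial\mathsf{D}^{(c)}_{d_c}\le 2d_c$, any sign $(-1)^{d_c}$ from an $R^{-1}$-crossing being folded into $\mathsf{D}^{(c)}_{d_c}$. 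At a $K$-touching crossing $c$ whose under-strand lies on $L$, the two $L$-spins at $c$ are frozen by $s\vert_L$, so I substitute Lemma \ref{lem:diff-op-relative} instead, obtaining $\sum_{d_c\ge0}\hbar^{d_c}\,\mathsf{D}^{(c)}_{d_c}\cdot\mathsf{R}^{\pm1}(\alpha_c,\beta_c,\gamma_c)_\bullet$ with $\mathsf{D}^{(c)}_{d_c}\in\mathbb{Q}[\beta_c,\partial_{\beta_c}]$ and $\deg_\partial\mathsf{D}^{(c)}_{d_c}\le d_c\le 2d_c$, in particular free of any negative power of $\gamma_c$. One checks that the monomial prefactors produced by the two lemmas match, on the nose, the normalizations already built into Table \ref{tab:parametrized-R-matrices}, so that the product of the $d_c=0$ terms (together with the untouched cup/cap and $L$-$L$ factors) is exactly the free-state weight $\langle\cdots;\{\alpha\},\{\beta\},\{\gamma\};s\rangle^{\mathrm{free}}$; since $\mathsf{D}_0=1$ in Lemma \ref{lem:diff-op} and the $d=0$ operator of Lemma \ref{lem:diff-op-relative} is likewise the identity, $\mathsf{D}^{(c)}_0=1$ for every $c$.

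Now I would multiply these expansions over all $K$-touching crossings. Because $\mathsf{D}^{(c)}_{d_c}$ involves only $\alpha_c,\beta_c,\gamma_c$, which occur in no other local factor, all of these operators commute with one another and with the remaining factors; so, after collecting powers of $\hbar$,
\[
\prod_c\Big(\sum_{d_c\ge0}\hbar^{d_c}\mathsf{D}^{(c)}_{d_c}\Big)\cdot(\text{free weight})
\;=\;\sum_{d\ge0}\hbar^{d}\,\mathsf{D}^{\mathrm{total}}_d\cdot(\text{free weight}),
\qquad
\mathsf{D}^{\mathrm{total}}_d:=\sum_{\substack{(d_c)_c\,:\,\sum_c d_c=d}}\ \prod_c\mathsf{D}^{(c)}_{d_c},
\]
a finite sum for each $d$. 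The bound $\deg_\partial\mathsf{D}^{\mathrm{total}}_d\le\sum_c\deg_\partial\mathsf{D}^{(c)}_{d_c}\le\sum_c 2d_c=2d$ is immediate; a $\gamma_c$ with $c$ an under-$L$ crossing can enter $\mathsf{D}^{\mathrm{total}}_d$ only through the factor $\mathsf{D}^{(c)}_{d_c}$, which carries none with negative exponent; and $\mathsf{D}^{\mathrm{total}}_0=1$. Finally, since each $\mathsf{D}^{\mathrm{total}}_d$ differentiates only in the formal variables $\{\alpha\},\{\beta\},\{\gamma\}$, it commutes with the (formal) sum over $s\vert_K$ and may be pulled outside it, yielding precisely the identity asserted in the Corollary.

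The step I expect to demand the most care — the main obstacle — is the consistency check in the second paragraph: not so much matching prefactors, but verifying that at under-$L$ crossings one really may trade Lemma \ref{lem:diff-op} for Lemma \ref{lem:diff-op-relative} without disturbing the $\hbar$-grading. This is exactly where fixing $s\vert_L$ is indispensable: the operators $\mathsf{D}_d$ of Lemma \ref{lem:diff-op} generally contain negative powers of $\gamma$, which under the specialization $\gamma=x^{\mp1/4}q^{\pm m/4}(1-q^{\mp m})=O(\hbar)$ would turn into negative powers of $\hbar$; only after the two $L$-spins at the crossing are frozen can the expansion of $R(x,q^m,e^\hbar)^{\underline{i}',j'}_{i,\underline{j}}$ (and of $R^{-1}(q^m,x,e^\hbar)^{i',\underline{j}'}_{\underline{i},j}$) be reorganized into one whose coefficients are polynomial, rather than Laurent, in $\gamma$, as recorded in Lemma \ref{lem:diff-op-relative}.
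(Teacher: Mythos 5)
Your proposal is correct and follows the same route as the paper: apply Lemma \ref{lem:diff-op} at crossings whose under-strand is on $K$, apply Lemma \ref{lem:diff-op-relative} (with the $L$-spins frozen by $s\vert_L$) at crossings whose under-strand is on $L$, and multiply the commuting per-crossing expansions to get $\mathsf{D}^{\mathrm{total}}_d$ with the additive degree bound. You also correctly identify the one genuine subtlety — that the negative powers of $\gamma$ in Lemma \ref{lem:diff-op} would produce negative powers of $\hbar$ under the specialization $\gamma = O(\hbar)$ at under-$L$ crossings — which is exactly the point the paper makes in the paragraph preceding the corollary.
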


Thanks to the bound on the degree of the differential operators $\mathsf{D}^{\mathrm{total}}_d$, Lemma \ref{lem:perturbed-random-walk} will follow from the following last lemma:
\begin{lem}\label{lem:free-state-sum}
The free state sum is a rational function in the parameters and $q^{\pm \frac14}$ and has the following form:
\[
\sum_{s\vert_K}
\langle D_{(K,V_\infty(x)), \{(L_i, V_{m_i})\}}; \{\alpha\}, \{\beta\}, \{\gamma\}; s\rangle^{\mathrm{free}} = 
\frac{\mathsf{P}^{s\vert_L,\textrm{free}}}{\det(I-\mathcal{B})^{1+\delta}},
\]
where $\mathcal{B}$ denotes the transition matrix of the associated random walk on the arcs of $K$ (see Remark \ref{rmk:random-walk}), 
$\mathsf{P}^{s\vert_L,\textrm{free}}$ is some polynomial in the parameters $\{\alpha\}, \{\beta\}, \{\gamma\}$ and $q^{\pm \frac14}$ whose $\gamma$-degree is at least $\delta$, the total interaction between $K$ and $L$. 
In particular, after specialization of the parameters, it becomes 
\[
\sum_{k \geq \delta} \frac{P_k^{s\vert_L,\textrm{free}}(x)}{\Delta_K(x)^{k + 1}} \hbar^{k}
\]
for some Laurent polynomials $P_k^{s\vert_L,\textrm{free}}(x) \in \mathbb{Q}[x^{\pm \frac12}]$. 
\end{lem}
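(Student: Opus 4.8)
The plan is to mimic Rozansky's proof of the classical MMR conjecture \cite{Rozansky}, using the random-walk formalism of \cite{LinWang}: realize the free state sum as a sum over multisets of ``quantum trajectories'' traveling along the arcs of $K$, and evaluate it by summing the resulting geometric series in closed form. First I would make the random-walk interpretation precise --- this is the transition matrix $\mathcal{B}$ of Remark \ref{rmk:random-walk}. The combinatorial content of $\mathsf{R}(\alpha,\beta,\gamma)_{i,j}^{i',j'} = \delta_{i+j,i'+j'}\binom{i}{j'}\alpha^j\beta^{j'}\gamma^{i-j'}$ is that the spin $i$ on an over-arc of $K$ may be regarded as $i$ indistinguishable quanta, of which $j'$ continue along the over-strand (weight $\beta$ each) and $i-j'$ hop down onto the under-strand (weight $\gamma$ each), while the quanta already on the under-strand are carried through (weight $\alpha$ each); at a cup or cap the quanta are reflected, picking up the per-quantum weights of Table \ref{tab:cupsandcaps}. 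Once $s\vert_L$ is fixed, the free state sum over $s\vert_K$ thus becomes a sum over finite multisets of quantum walks on the arc graph of $K$ --- which carries ``shortcut'' edges coming from the hops at self-crossings --- consisting of closed walks that never meet $L$, together with a number of \emph{open} walks, each emitted onto $K$ from a strand of $L$ at a crossing where $L$ is the over-strand and later absorbed into $L$ at a crossing where $K$ is the over-strand. Because $s\vert_L$ is held fixed, the spin exchanged between $K$ and $L$ at each such crossing is forced --- it is the jump of $s\vert_L$ there --- and the total number of exchanged quanta is exactly $\delta$, the total interaction between $K$ and $L$; in particular $\delta = 0$ precisely when $s\vert_L$ is constant along each component, consistent with Lemma \ref{lem:leading-term}.

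Next I would carry out the summation by transfer-matrix/zeta-function techniques. The generating function over multisets of (non-backtracking) closed walks equals $\det(I - \mathcal{B})^{-1}$ by the standard closed-walk generating-function identity used in \cite{LinWang}; each of the $\delta$ open walks, from its emitting crossing $c$ to its absorbing crossing $c'$, contributes a matrix element $(I - \mathcal{B})^{-1}_{c,c'}$, i.e. one further factor $\det(I - \mathcal{B})^{-1}$ together with a polynomial cofactor of $I - \mathcal{B}$; and the cups, caps, continuing-quantum weights, the crossings among strands of $L$, and the $\gamma$-weights at the $K$--$L$ crossings combine into an overall prefactor that is polynomial in the parameters and $q^{\pm\frac14}$ and carries $\gamma$-degree at least $\delta$ (a $\gamma$ is picked up for every exchanged quantum). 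Collecting everything gives
\[
\sum_{s\vert_K}\langle D_{(K,V_\infty(x)),\{(L_i,V_{m_i})\}};\{\alpha\},\{\beta\},\{\gamma\};s\rangle^{\mathrm{free}} = \frac{\mathsf{P}^{s\vert_L,\mathrm{free}}}{\det(I - \mathcal{B})^{1+\delta}},
\]
with $\mathsf{P}^{s\vert_L,\mathrm{free}}$ polynomial of $\gamma$-degree $\geq \delta$. Since $\det(I - \mathcal{B})$ is a matrix of polynomial entries in $\alpha,\beta,\gamma,q^{\pm\frac14}$ and $\det(I - \mathcal{B})$ is not the zero polynomial (it specializes to $\Delta_K(x)\neq 0$ at $\hbar=0$, as noted below), the quotient is meaningful and the free state sum is a rational function; one should also observe that it is \emph{a priori} a well-defined formal power series in the $\gamma$-parameters, since only finitely many states contribute in each $\gamma$-degree (a spin of size $N$ somewhere along $K$ requires at least $N$ hops), so the displayed equality is an identity of formal power series subsequently recognized as rational.

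Finally I would specialize the parameters as in Lemmas \ref{lem:diff-op} and \ref{lem:diff-op-relative} and expand in $\hbar$. There $\alpha$, $\beta$, and the $\gamma$'s at self-crossings of $K$ lie in $\mathbb{Q}[x^{\pm\frac12}]$, whereas each $\gamma$ at a crossing between $K$ and a strand colored $V_m$ equals $x^{\pm\frac14}q^{\mp\frac m4}(1-q^{\pm m}) = O(\hbar)$; hence $\mathsf{P}^{s\vert_L,\mathrm{free}}$ becomes a power series in $\hbar$ of order $\geq \delta$ with coefficients in $\mathbb{Q}[x^{\pm\frac12}]$. On the other hand $\det(I - \mathcal{B})$ specializes, up to a unit of $\mathbb{Q}[x^{\pm\frac12}]$, to $\Delta_K(x) + O(\hbar)$: this is the classical (empty-$L$) case, for which $\mathcal{B}\vert_{\hbar = 0}$ is conjugate to the reduced Burau matrix of $K$ at the variable $x$, so that $\det(I - \mathcal{B})\vert_{\hbar=0} = \Delta_K(x)$ up to a unit --- the backbone of Rozansky's proof \cite{Rozansky}. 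Expanding $\det(I - \mathcal{B})^{-(1+\delta)}$ in $\hbar$ then produces, at order $\hbar^{j}$, a denominator $\Delta_K(x)^{1+\delta+j}$; multiplying the two expansions, the coefficient of $\hbar^{k}$ vanishes for $k < \delta$ and otherwise is a Laurent polynomial in $x^{\frac12}$ over $\Delta_K(x)^{k+1}$, which is exactly the claimed form $\sum_{k \geq \delta} P_k^{s\vert_L,\mathrm{free}}(x)\,\Delta_K(x)^{-(k+1)}\,\hbar^{k}$.

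I expect the main obstacle to be the bookkeeping in the second step: setting up the arc graph and the matrix $\mathcal{B}$ so that self-crossings, cups, caps, and --- above all --- the two-sided interaction with $L$ (emission and absorption) are treated uniformly, and then proving rigorously that the weighted sum over closed-walk multisets together with the $\delta$ forced open walks collapses to exactly $\det(I - \mathcal{B})^{-(1+\delta)}$ times a polynomial of $\gamma$-degree $\geq \delta$. In particular one must invoke the Ihara--Selberg/Bass-type identity in the form matching the non-backtracking combinatorics built into $\mathsf{R}(\alpha,\beta,\gamma)$, and one must check that, because $s\vert_L$ fixes the exchanged spin at every $K$--$L$ crossing, exactly $\delta$ open walks with prescribed endpoints occur --- so that the denominator is a genuine power of $\det(I - \mathcal{B})$ and no further resummation is needed.
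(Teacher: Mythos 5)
Your proposal follows the same route as the paper's proof: interpret the free state sum as a random walk on the arcs of $K$, split the trajectories into closed cycles together with $\delta$ open paths whose sources and sinks are dictated by $s\vert_L$, resum the cycle part into $\det(I-\mathcal{B})^{-1}$ and each path into $\mathrm{adj}(I-\mathcal{B})_{a,a'}/\det(I-\mathcal{B})$, and then specialize. The one substantive step you leave open --- and correctly flag as the main obstacle --- is precisely the step the paper's proof is devoted to: the binomial $\binom{i}{j'}$ in $\mathsf{R}$ is traded for a choice of \emph{jumping datum}, and one must show that states on $K$ decorated by jumping data are in bijection with \emph{primitive} multi-cycle-paths, so that the closed part resums to exactly $\prod_{C\ \mathrm{primitive}}(1-\mathrm{wt}(C))^{-1}=\det(I-\mathcal{B})^{-1}$ and the cofactor formula applies to the paths. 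The paper establishes this with the ``highway-and-cars'' picture: car trajectories can only cross where their routes diverge, which forces primitivity, and conversely every primitive multi-cycle-path determines a unique such diagram. Your proposed tool --- an Ihara--Selberg/Bass identity for non-backtracking walks --- is not the right one here; the relevant identity is the elementary determinant identity for primitive cycles of a weighted directed graph, and the real work is the bijection, not the identity. As written, the proof is therefore incomplete at its central point.

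Two smaller remarks. First, your claim that every $\gamma$ at a $K$--$L$ crossing specializes to $x^{\pm\frac14}q^{\mp\frac{m}{4}}(1-q^{\pm m})=O(\hbar)$ is wrong for the crossings where $L$ is the \emph{over}-strand: there Lemma \ref{lem:diff-op} gives $\gamma=q^{-\frac{m}{4}}x^{\frac14}(1-x^{-1})$, which is not small in $\hbar$. Each exchanged quantum in fact picks up two $\gamma$'s, one at emission from $L$ (not $O(\hbar)$) and one at absorption into $L$ (which is $O(\hbar)$, via Lemma \ref{lem:diff-op-relative}); only the latter $\delta$ factors produce the $\hbar^{\delta}$, so the stated bound $k\geq\delta$ survives, but your bookkeeping would otherwise suggest an order $2\delta$ that is not there. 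Second, your explicit treatment of the specialization $\det(I-\mathcal{B})\vert_{\hbar=0}=\Delta_K(x)$ up to a unit (via the Burau matrix) is a welcome addition: the paper's proof stops at the rational-function form and leaves the ``in particular'' clause to the machinery of \cite{Rozansky}.
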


\begin{rmk}\label{rmk:random-walk}
By ``the associated random walk on the arcs of $K$,'' we mean the Markov chain whose nodes are the internal arcs of $K$ (minus the local maxima and minima) and whose ``transition probabilities'' are given by the parameter $\alpha_c, \beta_c$, or $\gamma_c$ at each crossing $c$, depending on the type of jump\footnote{$\alpha_c$ for the under-strand, $\beta_c$ for the over-strand, and $\gamma_c$ for the jump from over to under. }, and $q^{\pm \frac12}$ when going through a local maximum or minimum.\footnote{$q^{-\frac12}$ when going clockwise and $q^{\frac12}$ when going anti-clockwise.} 
See Example \ref{eg:random-walk} for an explicit example. 
\end{rmk}

\begin{proof}[Proof of Lemma \ref{lem:free-state-sum}]
Similarly to \cite{LinWang}, the main idea is to make use of the fact that the free state sum can be interpreted in terms of random walks of free bosons. 
Once a state is fixed, we think of each arc $a$ of $K$ with spin $s(a)$ as being occupied by $s(a)$ bosons.\footnote{We use the term ``boson'' just to indicate that the same arc can be occupied by multiple bosons. 
Another model, which counts simple multi-cycle-paths with sign (see Section \ref{subsec:proof-of-Thm-B}) will be called a random walk of ``fermions.''} 
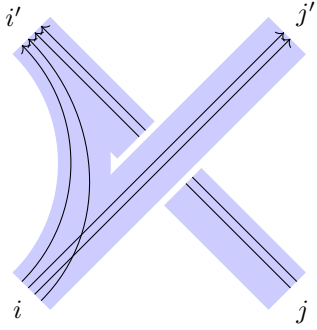
\begin{figure}
\centering
\[
\vcenter{\hbox{
\begin{tikzpicture}[scale=0.85]
\draw[blue!20, line width=20] (2, -2) -- (-2, 2);
\draw[blue!20, line width=20] (-2, -2) to[out=45, in=-45] (-2, 2);
\draw[->] (2.05, -1.95) -- (-1.95, 2.05);
\draw[->] (2.15, -1.85) -- (-1.85, 2.15);
\draw[white, line width=25] (-0.3, -0.3) -- (2, 2);
\draw[blue!20, line width=20] (-2, -2) -- (2, 2);
\draw[->] (-1.95, -2.05) -- (2.05, 1.95);
\draw[->] (-2.05, -1.95) -- (1.95, 2.05);
\draw[->] (-2.15, -1.85) to[out=45, in=-45] (-2.15, 1.85);
\draw[->] (-1.85, -2.15) to[out=45, in=-45] (-2.05, 1.95);
\node[below left] at (-2, -2){$i$};
\node[below right] at (2, -2){$j$};
\node[above left] at (-2, 2){$i'$};
\node[above right] at (2, 2){$j'$};
\end{tikzpicture}
}}
\]
\caption{Highway-and-cars diagram}
\label{fig:highway-and-cars diagram}
\end{figure}
For each, say positive, crossing with the spins on the four neighboring arcs $i,j,i',j'$ as usual, exactly $i-j'$ out of the $i$ bosons from the bottom left arc must jump to the top left arc, but the state itself doesn't specify which bosons jump; we call the information specifying that the \emph{jumping datum}. Note, there are $\binom{i}{j'}$ such choices, so we can drop the binomial factors in the parametrized $\mathsf{R}$-matrices and instead sum over states together with a decoration by the jumping data. 

A convenient way to visualize a jumping datum is to use what we will call the ``highway-and-cars diagram,'' as shown in Figure \ref{fig:highway-and-cars diagram}, which is a decoration of the highway diagram (as in \cite{Park_inverted}) by the trajectories of the cars (i.e. bosons) on the highway. 
That is, each arc $a$ with spin $s(a)$ is represented by a highway with $s(a)$ lanes, each of which is occupied with a car, and the trajectories of the cars can cross only when they are approaching an interchange to move from an over-strand to an under-strand. 
Which cars stay on the over-strand and which cars go down to an under-strand determine the jumping datum. 

Once we have a highway-and-cars diagram, we have trajectories of the cars. 
Some of them will be cycles (i.e. closed loops), while others are paths with distinct endpoints; 
the sources (resp. sinks) of those paths must be where some spins jump from $L$ to $K$ (resp. from $K$ to $L$). 
The number of paths among those trajectories is exactly $\delta$, the total interaction between $K$ and $L$ ($=$ total jump of spins from $L$ to $K$ $=$ total jump of spins from $K$ to $L$). 
We call an unordered tuple of such cycles and paths a \emph{multi-cycle-path}, and call it \emph{primitive} if all the cycles appearing in the tuple are primitive, in the sense that it is not a power of any shorter cycle. 

Any multi-cycle-path coming from a highway-and-cars diagram must be primitive, since the trajectories of the cars can only cross when their routes diverge from each other. 
On the other hand, given any primitive multi-cycle-path, we can construct the corresponding highway-and-cars diagram: for each arc $a$, the set of all occurrences\footnote{The same arc may be used multiple times in a given cycle or a path, so by ``occurrence'', we are counting all of them.} of $a$ in a given primitive multi-cycle-path is totally ordered, by tracing back to the most recent crossing where they merged. 
It follows that there is a one-to-one correspondence between 
\begin{enumerate}
\item the states on $K$ decorated by jumping data and 
\item primitive multi-cycle-paths with the prescribed sources and sinks determined by the state on $L$ which we have fixed in the background. 
\end{enumerate}

Note that, up to a common overall factor of order $O(\hbar^0)$, the weight is multiplicative with respect to superposition, so that the weight of a primitive multi-cycle-path is the product of the weights of the individual primitive cycles and paths; 
this is why we call the state sum defined by parametrized $\mathsf{R}$-matrices ``free.''
We can then factorize the primitive multi-cycle part out, which gives a factor of $\frac{1}{\det(I-\mathcal{B})}$. 
The sum over paths gives a finite sum of products of $\delta$ terms like 
\[
(I + \mathcal{B} + \mathcal{B}^2 + \cdots)_{a,a'} = (I-\mathcal{B})^{-1}_{a,a'} = \frac{\mathrm{adj}(I-\mathcal{B})_{a,a'}}{\det(I-\mathcal{B})},
\]
where $\mathrm{adj}$ denotes the adjugate, and $a,a'$ are arcs at the start and end points of the path. 
It follows that the free state sum is a rational function with denominator $\det(I-\mathcal{B})^{1+\delta}$, as desired. 
\end{proof}

\begin{eg}\label{eg:random-walk}
Let us illustrate this through a concrete example. 
Consider the tangle diagram of $K \sqcup L$ depicted in the left-hand side of Figure \ref{fig:Markov-chain}, with a fixed state $s\vert_L$ on $L$. 
The corresponding Markov chain, along with the sources and sinks, is drawn on the right-hand side. 
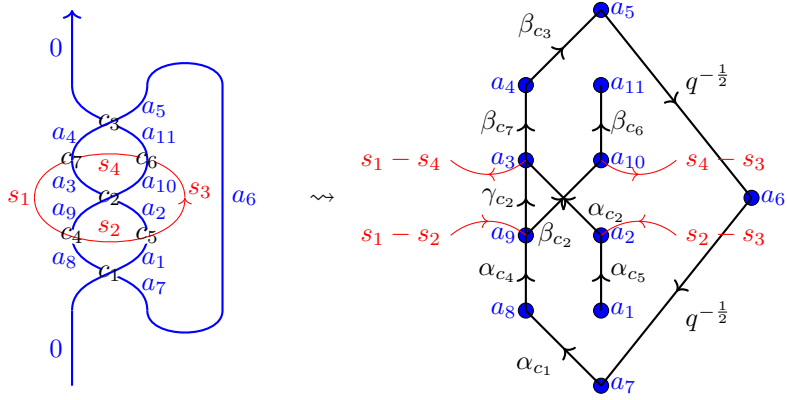
\begin{figure}
\centering
\[
\vcenter{\hbox{
\begin{tikzpicture}
\draw[red] (-0.5, 1.5) to[out=90, in=90] (1.5, 1.5); 
\draw[white, line width=5] (0, 0) to[out=90, in=-90] (1, 1) to[out=90, in=-90] (0, 2) to[out=90, in=-90] (1, 3);
\draw[white, line width=5] (1, 0) to[out=90, in=-90] (0, 1) to[out=90, in=-90] (1, 2) to[out=90, in=-90] (0, 3);

\draw[blue, thick] (0, -1) -- (0, 0);
\draw[blue, thick] (1, 0) to[out=90, in=-90] (0, 1);
\draw[white, line width=5] (0, 0) to[out=90, in=-90] (1, 1);
\draw[blue, thick] (0, 0) to[out=90, in=-90] (1, 1);
\draw[blue, thick] (1, 1) to[out=90, in=-90] (0, 2);
\draw[white, line width=5] (0, 1) to[out=90, in=-90] (1, 2);
\draw[blue, thick] (0, 1) to[out=90, in=-90] (1, 2);
\draw[blue, thick] (1, 2) to[out=90, in=-90] (0, 3);
\draw[white, line width=5] (0, 2) to[out=90, in=-90] (1, 3);
\draw[blue, thick] (0, 2) to[out=90, in=-90] (1, 3);
\draw[blue, thick, ->] (0, 3) -- (0, 4);
\draw[blue, thick] (1, 3) to[out=90, in=90] (2, 3) -- (2, 0) to[out=-90, in=-90] (1, 0);
\draw[white, line width=5] (-0.5, 1.5) to[out=-90, in=-90] (1.5, 1.5); 
\draw[red, ->] (-0.5, 1.5) to[out=-90, in=-90] (1.5, 1.5); 

\node[blue, left] at (0, -0.5){$0$};
\node[blue, left] at (0, 3.5){$0$};
\node[blue, below right] at (0.8, 0.9){$a_1$};
\node[blue, above right] at (0.8, 1.1){$a_2$};
\node[blue, below left] at (0.2, 1.9){$a_3$};
\node[blue, above left] at (0.2, 2.1){$a_4$};
\node[blue, below right] at (0.8, 2.9){$a_5$};
\node[blue, right] at (2, 1.5){$a_6$};
\node[blue, above right] at (0.8, 0.1){$a_7$};
\node[blue, below left] at (0.2, 0.9){$a_8$};
\node[blue, above left] at (0.2, 1.1){$a_9$};
\node[blue, below right] at (0.8, 1.9){$a_{10}$};
\node[blue, above right] at (0.8, 2.1){$a_{11}$};
\node at (0.5, 0.5){$c_1$};
\node at (0.5, 1.5){$c_2$};
\node at (0.5, 2.5){$c_3$};
\node at (0, 1){$c_4$};
\node at (1, 1){$c_5$};
\node at (1, 2){$c_6$};
\node at (0, 2){$c_7$};
\node[red] at (-0.7, 1.5){$s_1$};
\node[red] at (0.5, 1.1){$s_2$};
\node[red] at (1.7, 1.6){$s_3$};
\node[red] at (0.5, 1.9){$s_4$};
\end{tikzpicture}
}}
\quad\rightsquigarrow
\vcenter{\hbox{
\begin{tikzpicture}[
    mid/.style={
        postaction={
            decorate,
            decoration={
                markings,
                mark=at position 0.5 with {\arrow{>}} 
            }
        }
    },
    midback/.style={
        postaction={
            decorate,
            decoration={
                markings,
                mark=at position 0.5 with {\arrow{<}} 
            }
        }
    },
]
\coordinate (a1) at (1, 0);
\coordinate (a2) at (1, 1);
\coordinate (a3) at (0, 2);
\coordinate (a4) at (0, 3);
\coordinate (a5) at (1, 4);
\coordinate (a6) at (3, 1.5);
\coordinate (a7) at (1, -1);
\coordinate (a8) at (0, 0);
\coordinate (a9) at (0, 1);
\coordinate (a10) at (1, 2);
\coordinate (a11) at (1, 3);
\coordinate (c4) at (-1, 1);
\coordinate (c5) at (2, 1);
\coordinate (c6) at (2, 2);
\coordinate (c7) at (-1, 2);
\draw[fill=blue] (a1) circle (0.1);
\draw[fill=blue] (a2) circle (0.1);
\draw[fill=blue] (a3) circle (0.1);
\draw[fill=blue] (a4) circle (0.1);
\draw[fill=blue] (a5) circle (0.1);
\draw[fill=blue] (a6) circle (0.1);
\draw[fill=blue] (a7) circle (0.1);
\draw[fill=blue] (a8) circle (0.1);
\draw[fill=blue] (a9) circle (0.1);
\draw[fill=blue] (a10) circle (0.1);
\draw[fill=blue] (a11) circle (0.1);
\node[blue, right] at (a1){$a_1$};
\node[blue, right] at (a2){$a_2$};
\node[blue, left] at (a3){$a_3$};
\node[blue, left] at (a4){$a_4$};
\node[blue, right] at (a5){$a_5$};
\node[blue, right] at (a6){$a_6$};
\node[blue, right] at (a7){$a_7$};
\node[blue, left] at (a8){$a_8$};
\node[blue, left] at (a9){$a_9$};
\node[blue, right] at (a10){$a_{10}$};
\node[blue, right] at (a11){$a_{11}$};
\draw[mid, thick] (a1) -- (a2);
\draw[mid, thick] (a2) -- (a3);
\draw[mid, thick] (a3) -- (a4);
\draw[mid, thick] (a4) -- (a5);
\draw[mid, thick] (a5) -- (a6);
\draw[mid, thick] (a6) -- (a7);
\draw[mid, thick] (a7) -- (a8);
\draw[mid, thick] (a8) -- (a9);
\draw[mid, thick] (a9) -- (a10);
\draw[mid, thick] (a9) -- (a3);
\draw[mid, thick] (a10) -- (a11);
\draw[mid, red] (c4) to[out=40, in=140] (a9);
\draw[mid, red] (c5) to[out=140, in=40] (a2);
\draw[mid, red] (a10) to[out=-40, in=-140] (c6);
\draw[mid, red] (a3) to[out=-140, in=-40] (c7);
\node[red, left] at (c4){$s_1-s_2$};
\node[red, right] at (c5){$s_2-s_3$};
\node[red, right] at (c6){$s_4-s_3$};
\node[red, left] at (c7){$s_1-s_4$};
\node[right] at ($0.5*(a1)+0.5*(a2)$){$\alpha_{c_5}$};
\node[right] at ($0.7*(a2)+0.3*(a3)$){$\alpha_{c_2}$};
\node[left] at ($0.5*(a3)+0.5*(a4)$){$\beta_{c_7}$};
\node[above left] at ($0.5*(a4)+0.5*(a5)$){$\beta_{c_3}$};
\node[above right] at ($0.5*(a5)+0.5*(a6)$){$q^{-\frac{1}{2}}$};
\node[below right] at ($0.5*(a6)+0.5*(a7)$){$q^{-\frac{1}{2}}$};
\node[below left] at ($0.5*(a7)+0.5*(a8)$){$\alpha_{c_1}$};
\node[left] at ($0.5*(a8)+0.5*(a9)$){$\alpha_{c_4}$};
\node[below] at ($0.6*(a9)+0.4*(a10)+(0, -0.1)$){$\beta_{c_2}$};
\node[left] at ($0.5*(a9)+0.5*(a3)$){$\gamma_{c_2}$};
\node[right] at ($0.5*(a10)+0.5*(a11)$){$\beta_{c_6}$};
\end{tikzpicture}
}}
\]
\caption{Random walk (Markov chain) example; $a_1, \cdots, a_{11}$ label the internal arcs of $K$ minus the local maxima and minima, $c_1, \cdots, c_7$ label the crossings, and $s_1, \cdots, s_4$ denote the spins on arcs of $L$.}
\label{fig:Markov-chain}
\end{figure}
Let $\mathcal{B}$ be the transition matrix of this Markov chain; explicitly, 
\[
\mathcal{B} = 
\begin{pmatrix}
0 & \alpha_{c_5} & 0 & 0 & 0 & 0 & 0 & 0 & 0 & 0 & 0 \\
0 & 0 & \alpha_{c_2} & 0 & 0 & 0 & 0 & 0 & 0 & 0 & 0 \\
0 & 0 & 0 & \beta_{c_{7}} & 0 & 0 & 0 & 0 & 0 & 0 & 0 \\
0 & 0 & 0 & 0 & \beta_{c_3} & 0 & 0 & 0 & 0 & 0 & 0 \\
0 & 0 & 0 & 0 & 0 & q^{-\frac{1}{2}} & 0 & 0 & 0 & 0 & 0 \\
0 & 0 & 0 & 0 & 0 & 0 & q^{-\frac{1}{2}} & 0 & 0 & 0 & 0 \\
0 & 0 & 0 & 0 & 0 & 0 & 0 & \alpha_{c_1} & 0 & 0 & 0 \\
0 & 0 & 0 & 0 & 0 & 0 & 0 & 0 & \alpha_{c_4} & 0 & 0 \\
0 & 0 & \gamma_{c_2} & 0 & 0 & 0 & 0 & 0 & 0 & \beta_{c_2} & 0 \\
0 & 0 & 0 & 0 & 0 & 0 & 0 & 0 & 0 & 0 & \beta_{c_6} \\
0 & 0 & 0 & 0 & 0 & 0 & 0 & 0 & 0 & 0 & 0
\end{pmatrix}.
\]
The parameters eventually get specialized in the following way:
\[
\alpha_{c_1} = \alpha_{c_2} = \beta_{c_3} = x^{-\frac12},\quad \gamma_{c_2} = 1-x^{-1},\quad \alpha_{c_4} = \alpha_{c_5} = \beta_{c_6} = \beta_{c_7} = q^{-\frac{m}{2}}.
\]
The free state sum is then the weighted count of primitive multi-cycle-paths with the prescribed sources and sinks. 
Suppose the state on $L$ is $(s_1,s_2,s_3,s_4) = (2,1,0,1)$ for example. 
Then, there are 2 ways to connect the sources to sinks; either $(a_9 \rightarrow a_{10},\; a_{2} \rightarrow a_3)$ or $(a_9 \rightarrow a_{3},\; a_{2} \rightarrow a_{10})$. 
As a result, the total weighted count of multi-cycle-paths in this example is
\[
\frac{\mathrm{adj}(I-\mathcal{B})_{a_{9},a_{10}}\mathrm{adj}(I-\mathcal{B})_{a_{2},a_{3}} + \mathrm{adj}(I-\mathcal{B})_{a_{9},a_{3}}\mathrm{adj}(I-\mathcal{B})_{a_{2},a_{10}}}{\det(I-\mathcal{B})^{3}}.
\]
\end{eg}

\section{From skeins to $q$-series}\label{sec:skein-q-series}
\subsection{Brief review of inverted state sum}
Here we briefly recall how the BPS $q$-series are defined for complements of ``nice'' knots, using the inverted state sum \cite{Park_inverted, Park-thesis}. 
An \emph{inversion datum} on a diagram of $K$ is an assignment of signs ($+$ or $-$) on each arc, in such a way that the $-$-sign (and hence also the $+$-sign) appears an even number of times at each crossing. 
An inversion datum determines an \emph{inverted state sum}: 
it is defined in the same way as the infinite state sum using $R$-matrices for Verma modules that we saw in the previous section, except we sum over negative (instead of non-negative) integer spins on the arcs labeled $-$ by the inversion datum, using the fact that the $R$-matrices naturally extend to all integer spin indices.\footnote{There is also an overall sign $(-1)^s$ in the inverted state sum that is determined by the inversion datum; see \cite{Park_inverted}.} 
We call an inversion datum \emph{nice} if the inverted state sum converges as a Laurent series in $x^{-\frac12}$ with coefficients in Laurent polynomials in $q^{\frac14}$, i.e., if only finitely many states contribute to each $x$-degree. 
Likewise, we call a knot diagram \emph{nice} if it admits a nice inversion datum, and call the knot itself \emph{nice} if it admits a nice knot diagram. 
Many knots are known to be nice; for instance, any homogeneous braid closure is nice, and all fibered knots with at most 12 crossings are known to be nice \cite{Park_inverted, OSSS}. 
\begin{thm}[{\cite{Park_inverted}}]
For any nice inversion datum on a diagram of $K$, the $\hbar$-expansion of the associated inverted state sum agrees with the $x^{-1}$-expansion of the MMR expansion. 
In particular, the inverted state sum is an invariant of nice knots (i.e., independent of the choice of the nice diagram). 
\end{thm}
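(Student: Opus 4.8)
The plan is to re-run the perturbative random-walk analysis of Section~\ref{sec:MMR}, now for the inverted state sum, and then to match the resulting rational functions term by term with those appearing in the MMR expansion of Theorem~\ref{mainthm:MMR} (in the case $L=\emptyset$), the only difference being the point at which each rational function is expanded into a power series. Fix a nice diagram $D$ of $K$ together with a nice inversion datum; by definition of niceness, the inverted state sum is a well-defined element of $\mathbb{Z}[q^{\pm\frac14}]((x^{-\frac12}))$, so it suffices to identify its $\hbar$-expansion, where $\hbar=\log q$.

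\textbf{Step 1 (the perturbative expansion survives sign flips).} The identity of Lemma~\ref{lem:diff-op}, which writes each Verma $R$-matrix entry as $\sum_{d\ge 0}\hbar^d\,\mathsf{D}_d$ applied to the parametrized monomial $\mathsf{R}(\alpha,\beta,\gamma)_{i,j}^{i',j'}=\delta_{i+j,i'+j'}\binom{i}{j'}\alpha^j\beta^{j'}\gamma^{i-j'}$, is a purely algebraic identity in the spin indices $i,j,i',j'$ regarded as formal integers (with $\binom{i}{j'}$ the polynomial $i(i-1)\cdots(i-j'+1)/j'!$, and the $R$-matrix entries defined by the analytic continuation of the same formulas). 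Hence it holds verbatim for the negative spins that occur on the negative arcs of the inversion datum. Exactly as in the Corollary following Lemma~\ref{lem:diff-op-relative}, the inverted state sum therefore expands as $\sum_{d\ge 0}\hbar^d\,\mathsf{D}^{\mathrm{total}}_d\cdot(\text{inverted free state sum})$ after the appropriate (inverted) specialization of the parameters, where the inverted free state sum is built from the same parametrized $\mathsf{R}$-matrices but summed over the inverted range of spins.

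\textbf{Step 2 (the inverted free state sum is the $x^{-1}$-expansion of the same rational function).} As in the proof of Lemma~\ref{lem:free-state-sum}, a state together with its jumping datum corresponds bijectively to a primitive family of cycles and paths, now allowing a cycle to traverse a negative arc ``backwards,'' i.e.\ with the reversed local weight coming from the analytically-continued $R$-matrix entry. The resulting sum resums to a rational function $\mathsf{P}'/\det(I-\mathcal{B}')^{1+\delta}$ for a modified transition matrix $\mathcal{B}'$. The point is twofold. First, the geometric resummation $(I-\mathcal{B}')^{-1}=\sum_{k\ge 0}(\mathcal{B}')^k$ now converges in the $x^{-1}$-adic topology (on the inverted specialization each ``over-to-under'' edge contributes a factor that is $O(x^{-1})$), so the inverted free state sum \emph{is} the $x^{-1}$-expansion of that rational function. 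Second, $\det(I-\mathcal{B}')$ and the numerator $\mathsf{P}'$ coincide — up to an overall monomial in $x^{\pm\frac12}$, which is cancelled against the difference between the un-inverted and inverted prefactors in Tables~\ref{tab:R-matrices} and \ref{tab:parametrized-R-matrices} — with the $\det(I-\mathcal{B})$ and $\mathsf{P}^{\mathrm{free}}$ produced by the un-inverted state sum, since both transition matrices encode the same Fox-calculus presentation of the Alexander module of $K$ and passing to the inversion datum amounts to the reflection $s(a)\mapsto -s(a)$ on the negative arcs, which reflects/conjugates the transition matrix without changing $\det(I-\mathcal{B})\doteq\Delta_K(x)$ (using $\Delta_K(x)\doteq\Delta_K(x^{-1})$). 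Applying $\mathsf{D}^{\mathrm{total}}_d$ and specializing, the coefficient of $\hbar^d$ in the inverted state sum is therefore the $x^{-1}$-expansion of the same rational function $P_d(x)/\Delta_K(x)^{2d+1}$ appearing in the MMR expansion; by Rozansky \cite{Rozansky} this is precisely the standard MMR expansion of $K$.

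\textbf{Step 3 (invariance).} The rational functions $P_d(x)/\Delta_K(x)^{2d+1}$ depend only on $K$ (Theorem~\ref{mainthm:MMR} with $L=\emptyset$, i.e.\ \cite{Rozansky}), hence so does their common $x^{-1}$-expansion; since a nice inversion datum makes the inverted state sum an honest element of $\mathbb{Z}[q^{\pm\frac14}]((x^{-\frac12}))$, which is determined by its $\hbar$-expansion, the inverted state sum is independent of the chosen nice diagram and nice inversion datum. The main obstacle is Step~2: establishing that the inverted free state sum is genuinely the $x^{-1}$-expansion of the very same rational function that the un-inverted free state sum is the $(1-x)$-expansion of. This requires tracking the combinatorics of the multi-cycle-paths under the sign flip, checking that the analytically-continued $R$-matrix entries produce exactly the reflected transition weights, and bookkeeping all the monomial prefactors carefully enough that the two rational functions — not merely their denominators — are matched.
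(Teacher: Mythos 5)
Your Step 1 is exactly the paper's mechanism (Corollary \ref{cor:diff-op-inversion}: the differential operators $\mathsf{D}_d^{\mathrm{total}}$ are unchanged under inversion because Rozansky's lemmas hold for all integer spins), and your Step 3 is fine. The genuine gap is in Step 2, which you yourself flag as ``the main obstacle'' but then justify with a mechanism that does not work. You claim that passing to the inversion datum ``reflects/conjugates the transition matrix'' via $s(a)\mapsto -s(a)$ and then appeal to $\Delta_K(x)\doteq\Delta_K(x^{-1})$. But an inversion datum is a \emph{per-arc} sign assignment subject only to a parity constraint at each crossing; it is not a global substitution $x\mapsto x^{-1}$, and different arcs of the same diagram carry different signs. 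Consequently there is no single reflection of $\mathcal{B}$ that realizes the inversion, and the symmetry of the Alexander polynomial is not what is being used. Moreover, even if one could match the denominators this way, the statement needed is an equality of the \emph{entire} rational functions $\mathsf{P}/\det(I-\mathcal{B})^{1+\delta}$, numerators included, up to the explicit monomial prefactor; your argument says nothing about the cofactors $\mathrm{adj}(I-\mathcal{B})_{a,a'}$.

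What actually closes Step 2 (in \cite{Park_inverted} and in the proof of Theorem \ref{mainthm:skein-q-series} here) is a combinatorial bijection on the \emph{fermionic} side: one expands $\det(I-\mathcal{B})=\sum_{\mathcal{C}}(-1)^{|\mathcal{C}|}\mathrm{wt}(\mathcal{C})$ (and similarly the cofactors as signed sums over simple multi-cycle-paths), and exhibits a weight-preserving bijection, up to one overall monomial, between simple multi-cycles (resp.\ multi-cycle-paths) before and after inversion, given by swapping the used and unused arcs \emph{within the set of inverted arcs}. That swap is well defined precisely because of the parity condition in the definition of an inversion datum, and the overall monomial it produces is what cancels against the change of prefactors between the two tables of $R$-matrices. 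This bijection is the non-trivial content of the theorem; without it, your Step 2 is an assertion rather than a proof. If you replace the ``reflection of $\mathcal{B}$'' paragraph with this used/unused-arc swap argument, your outline becomes the paper's proof.
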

For any nice knot $K$, we call the resulting invariant the \emph{BPS $q$-series} of $S^3 \setminus K$ and denote it by $\widehat{Z}_{S^3 \setminus K}(x,q)$.\footnote{The BPS $q$-series for knot complements are also often denoted by $F_K(x,q)$.}

\subsection{Proof of Theorem \ref{mainthm:skein-q-series}}\label{subsec:proof-of-Thm-B}
By assumption, there is a nice diagram of $K$ as a long knot. 
Consider a $(1,1)$-tangle diagram of $K \sqcup L$ where the diagram of the long knot $K$ is nice, while the diagram of $L$ can be arbitrary; see, e.g., Figure \ref{fig:(1,1)-tangle}. 
We can consider the inverted state sum on this diagram, with the associated inversion datum on $K$. 
The inverted state sum still converges absolutely in $\mathbb{Z}[q^{\pm \frac14}]((x^{-\frac12}))$;\footnote{
Here, we are using power series in $x^{-1}$, following the convention of \cite{Park_inverted}. 
One can turn this into a power series in $x$ using Weyl symmetry $\widehat{Z}_{S^3 \setminus K}(x,q) = -\widehat{Z}_{S^3 \setminus K}(x^{-1},q)$. 
} 
the insertion of $L$ does not affect the convergence of the inverted state sum at all, as there are only finitely many possible spins on $L$. 

To conclude the proof, it suffices to show that the $\hbar$-expansion of this series agrees with that of the MMR expansion, 
since the MMR expansion is isotopy-invariant and factors through the skein relations on $L$. 
The proof of that is completely analogous to the one given in the proof of \cite[Thm. 1]{Park_inverted}, which was for the case when $L$ is the empty link: 
\begin{enumerate}
\item The differential operators $\mathsf{D}_d^{\mathrm{total}}$ remain the same, before and after inversion; see Corollary \ref{cor:diff-op-inversion}. 
Therefore it suffices to show that the free state sum (as in Lemma \ref{lem:free-state-sum}) matches, before and after inversion. 
\item Recall, from the proof of Lemma \ref{lem:free-state-sum}, that the free state sum is a finite sum of rational functions of the form
\[
\frac{\prod_{1\leq i\leq \delta}\mathrm{adj}(I-\mathcal{B})_{a_i,a_i'}}{\det(I-\mathcal{B})^{1+\delta}},
\]
where $\mathcal{B}$ is the transition matrix of the random walk, and $a_i, a_i'$ are some arcs. 
A key observation is that, just like how $\det(I-\mathcal{B})$ can be interpreted in terms of the random walk of fermions, i.e., as the signed sum of weights of simple multi-cycles
\[
\det(I-\mathcal{B}) = \sum_{\mathcal{C}} (-1)^{|\mathcal{C}|} \mathrm{wt}(\mathcal{C}),
\]
there is an analogous interpretation for the cofactors $\mathrm{adj}(I-\mathcal{B})_{a,a'}$. 
\begin{lem}
\[
\mathrm{adj}(I-\mathcal{B})_{a,a'} = \sum_{(\mathcal{C},P)}(-1)^{|\mathcal{C}|} \mathrm{wt}(\mathcal{C},P),
\]
where the sum is over all simple $(a \rightarrow a')$-multi-cycle-paths $(\mathcal{C},P)$.\footnote{That is, $P$ is a simple path from $a$ to $a'$ and does not share any arcs with the simple multicycle $\mathcal{C}$.} 
\end{lem}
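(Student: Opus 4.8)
\emph{Proof plan.} The strategy is to run the very same permutation expansion that produces the fermionic formula $\det(I-\mathcal{B}) = \sum_{\mathcal{C}}(-1)^{|\mathcal{C}|}\mathrm{wt}(\mathcal{C})$ for the determinant, but now keeping one distinguished cycle. Recall how that formula arises: writing $M := I-\mathcal{B}$ and indexing the nodes (the internal arcs of $K$ with the local maxima and minima removed) by a finite set, one expands $\det M = \sum_{\pi}\mathrm{sgn}(\pi)\prod_s M_{s,\pi(s)}$ over permutations $\pi$ of the node set, decomposes each $\pi$ into cycles, and uses $M_{s,t} = \delta_{s,t}-\mathcal{B}_{s,t}$ to split each fixed point $s$ of $\pi$ into its two contributions — ``$s$ unused'' (the $+1$ from $\delta_{s,s}$) and ``$s$ is a loop'' (the $-\mathcal{B}_{s,s}$); what survives is precisely a simple multi-cycle $\mathcal{C}$ together with a set of unused nodes, and a short sign count absorbs $\mathrm{sgn}(\pi)$ and the signs from expanding $M$ into the single global factor $(-1)^{|\mathcal{C}|}$.

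To prove the lemma I would first apply cofactor expansion,
\[
\mathrm{adj}(I-\mathcal{B})_{a,a'} = (-1)^{a+a'}\det\!\big(M^{(a',a)}\big),
\]
where $M^{(a',a)}$ is $M$ with row $a'$ and column $a$ deleted, and expand this determinant over bijections $\pi$ from (rows $\setminus\{a'\}$) to (columns $\setminus\{a\}$). Next I would invoke the standard device of extending each such $\pi$ to an honest permutation $\widehat{\pi}$ of the full node set by declaring $\widehat{\pi}(a') := a$; the sign identity $(-1)^{a+a'}\mathrm{sgn}(\pi) = \mathrm{sgn}(\widehat{\pi})$ — the combinatorial heart of cofactor expansion — rewrites the sum as $\sum_{\widehat{\pi}\colon\,\widehat{\pi}(a')=a}\mathrm{sgn}(\widehat{\pi})\prod_{s\neq a'}M_{s,\widehat{\pi}(s)}$. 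Decomposing $\widehat{\pi}$ into cycles, the cycle through $a'$ also contains $a=\widehat{\pi}(a')$, and, precisely because the factor $M_{a',\widehat{\pi}(a')}$ is omitted, its contribution is the weight of a \emph{simple} directed path $P$ from $a$ to $a'$ (the empty path if $a=a'$); the remaining cycles, after the same $\delta$-versus-$\mathcal{B}$ splitting of their fixed points, assemble into a simple multi-cycle $\mathcal{C}$ vertex-disjoint from $P$ and from itself. Finally I would redo the sign bookkeeping: a length-$\ell$ cycle contributes $(-1)^{\ell}\mathrm{wt}$, the distinguished path with $p$ edges contributes $(-1)^{p}\mathrm{wt}(P)$, and $\mathrm{sgn}(\widehat{\pi})$ is $(-1)^{p}$ times the product of $(-1)^{\ell-1}$ over the remaining cycles, so the $(-1)^{\ell-1}$ factors collapse the whole sign to $(-1)^{|\mathcal{C}|}$. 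Since the assignment $\widehat{\pi}\mapsto(\mathcal{C},P)$, together with the in/out choices at the fixed points, is a bijection onto simple $(a\to a')$-multi-cycle-paths, this yields $\mathrm{adj}(I-\mathcal{B})_{a,a'}=\sum_{(\mathcal{C},P)}(-1)^{|\mathcal{C}|}\mathrm{wt}(\mathcal{C},P)$.

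An alternative, more conceptual route starts from $\mathrm{adj}(I-\mathcal{B})_{a,a'} = \det(I-\mathcal{B})\cdot(I-\mathcal{B})^{-1}_{a,a'}$, expands $(I-\mathcal{B})^{-1}_{a,a'} = \sum_{k\geq 0}(\mathcal{B}^{k})_{a,a'}$ as the sum of weights of \emph{all} walks from $a$ to $a'$, multiplies by the fermionic determinant formula, and then exhibits a weight-preserving, sign-reversing involution on those pairs (simple multi-cycle $\mathcal{C}$, walk $P\colon a\to a'$) that are not yet simple multi-cycle-paths — namely those in which $P$ revisits a node or meets $\mathcal{C}$ — by transferring the ``first'' cycle the configuration contains between $\mathcal{C}$ and $P$. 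I expect the genuinely delicate point, in either version, to be making sure the cycle one extracts is honestly \emph{simple}: in the involution version this forces a careful choice of \emph{which} repetition along $P$ to cut at (the first return to the first repeated node, with analogous care when the obstruction is a node of $\mathcal{C}$) and a check that the involution is its own inverse, whereas in the permutation version simplicity is automatic from the cycle structure of $\widehat{\pi}$. For that reason I would base the actual write-up on the permutation expansion, and cite the involution only as a consistency check.
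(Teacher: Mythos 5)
Your main route is exactly the paper's argument: the paper likewise writes $\mathrm{adj}(I-\mathcal{B})_{a,a'}$ as $\sum_{\sigma(a')=a}\mathrm{sgn}(\sigma)\prod_{b\neq a'}(I-\mathcal{B})_{b,\sigma(b)}$, reads the unique cycle through $a'$ and $a$ as the simple path $P$ (simple because it is a permutation cycle), takes the remaining non-trivial cycles as $\mathcal{C}$, and collapses the signs to $(-1)^{|\mathcal{C}|}$ by the same bookkeeping. The proposal is correct and essentially identical in approach; the involution-based alternative is extra.
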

\begin{proof}
From the definition of a cofactor, we have
\[
\mathrm{adj}(I-\mathcal{B})_{a,a'} =
\sum_{\substack{\sigma \in S_{\mathcal{A}_K} \\ \sigma(a')=a}} \mathrm{sgn(\sigma)} \prod_{b \neq a'} (I-\mathcal{B})_{b,\sigma(b)}.
\]
The cycle decomposition of $\sigma$ contains a unique cycle 
\[
a' \mapsto a \mapsto \sigma(a) \mapsto \sigma^2(a) \mapsto \cdots \mapsto a'
\]
containing $a$ and $a'$, which we view as a simple path $P$ from $a$ to $a'$. 
The set of all the remaining non-trivial cycles forms a simple multi-cycle $\mathcal{C}$ disjoint from $P$. 
Rewriting the sum in terms of simple $(a\rightarrow a')$-multi-cycle-paths, we conclude
\begin{align*}
&\sum_{\substack{\sigma \in S_{\mathcal{A}_K} \\ \sigma(a')=a}} \mathrm{sgn(\sigma)} \prod_{b \neq a'} (I-\mathcal{B})_{b,\sigma(b)} \\
&=
\sum_{(\mathcal{C},P)} 
\qty(
\prod_{C \in \mathcal{C}}(-1)^{l(C)-1} \cdot (-1)^{l(P)}) \cdot 
\qty(
\prod_{C \in \mathcal{C}}(-1)^{l(C)} \cdot (-1)^{l(P)} \cdot
\mathrm{wt}(\mathcal{C},P)
) \\
&= \sum_{(\mathcal{C},P)} (-1)^{|\mathcal{C}|} \mathrm{wt}(\mathcal{C},P).
\end{align*}
\end{proof}
It was shown in \cite{Park_inverted} (see, especially \cite[Fig. 8]{Park_inverted}) that there is a bijection between the simple multi-cycles before and after inversion; the bijection simply swaps the set of used arcs and unused arcs within the set of inverted arcs. 
The same bijection can be extended to a bijection between simple multi-cycle-paths before and after inversion. 
Moreover, just as in the proof of \cite[Thm. 1]{Park_inverted}, it can be easily checked that the bijection preserves the weights up to an overall monomial, which cancels the overall monomial prefactor (see, e.g. the numerator of \cite[eqn. 8]{Park_inverted}). 
It follows that the free state sum matches before and after inversion. 
\end{enumerate}

\qed

\subsection{Boundary action}
It is worth noting how the boundary action of the skein algebra $\SkAlg^{\mathrm{SL}_2}_q(T^2)$ on $\Sk^{\mathrm{SL}_2}_q(S^3 \setminus K)$ affects the corresponding BPS $q$-series. 
Let $\hat{x}^{\frac12}$ and $\hat{y}$ be operators acting $\mathbb{Z}[q^{\pm \frac14}]$-linearly on $\mathbb{Z}[q^{\pm \frac14}]((x^{\frac12}))$ by 
\begin{align*}
\hat{x}^{\frac12} : x^u &\mapsto x^{u+\frac12}, \\
\hat{y} : x^u &\mapsto q^u x^u.
\end{align*}
By a local calculation, one can see that the meridian (colored by $V_2$) acts by $\hat{x}^{\frac12}+\hat{x}^{-\frac12}$, while the longitude (colored by $V_2$) acts by $\hat{y}+\hat{y}^{-1}$, since
\[
V_2 \otimes V_\infty(x) \cong V_\infty(q x) \oplus V_\infty(q^{-1}x).
\]
It follows that the boundary action of $\SkAlg^{\mathrm{SL}_2}_q(T^2)$ on $\Sk^{\mathrm{SL}_2}_q(S^3 \setminus K)$, under $\widehat{Z}$, factors through the embedding of Frohman-Gelca \cite{FrohmanGelca}
\[
\SkAlg^{\mathrm{SL}_2}_q(T^2) \hookrightarrow \mathbb{Z}[q^{\pm \frac14}]\langle \hat{x}^{\pm \frac12}, \hat{y}^{\pm 1} \rangle
\]
of the skein algebra of $T^2$ into the quantum torus.

\subsection{Examples}\label{subsec:two-variable-series-module-examples}
\begin{eg}[Unknot]
Let $\widehat{Z}_{S^3 \setminus \mathbf{0}_1, W_n}(x, q)$ denote the two-variable series associated to the complement of the unknot, with an insertion of the Wilson line defect $W_n$ along the meridian, colored by $V_n$. 
Let us use the symbol $\chi_m := \frac{x^{\frac{m}{2}}-x^{-\frac{m}{2}}}{x^{\frac12}-x^{-\frac12}}$ to simplify notation. 
Then, 
\[
\frac{\widehat{Z}_{S^3 \setminus \mathbf{0}_1, W_n}(x, q)}{x^{\frac12}-x^{-\frac12}} = \chi_n. 
\]
In general, for any knot $K$, if $\widehat{Z}_{S^3 \setminus K, W_n}(x, q)$ denotes the two-variable series associated to the complement of $K$, with an insertion of the Wilson line colored by $V_n$ along the meridian, then we have
\[
\widehat{Z}_{S^3 \setminus K, W_n}(x, q) = \chi_n \cdot \widehat{Z}_{S^3 \setminus K}(x,q). 
\]
Note that 
\[
\chi_2 \cdot \chi_n = \chi_{n-1} + \chi_{n+1}
\]
for any $n\geq 2$, which is nothing but the character formula for the fusion of two Wilson lines along the meridian. 

Let $R_{\chi}$ denote the unital commutative $\mathbb{Z}[q^{\pm \frac14}]$-algebra spanned by $\{\chi_n\}_{n\geq 1}$, i.e., the ring of characters of $\mathfrak{sl}_2$. 
Also, let the \emph{two-variable series module} $V_{x,q}(S^3 \setminus K)$ be the image of the skein module $\Sk_q^{\mathrm{SL}_2}(S^3 \setminus K)$ under $\widehat{Z}$, normalized by $x^{\frac12}-x^{-\frac12}$:
\[
V_{x,q}(S^3 \setminus K) := \frac{1}{x^{\frac12}-x^{-\frac12}} \mathrm{Im}\qty(\widehat{Z} : \Sk_q^{\mathrm{SL}_2}(S^3 \setminus K) \rightarrow \mathbb{Z}[q^{\pm \frac14}]((x^{\frac12})) ). 
\]
Then, the above discussion implies that, $V_{x,q}(S^3 \setminus K)$ is naturally a module over $R_{\chi}$. 
Clearly, the two-variable series module for the unknot complement is the rank-$1$ free $R_{\chi}$-module spanned by $1$: 
\[
V_{x,q}(S^3 \setminus \mathbf{0}_1) = R_{\chi}\{1\}. 
\] 
\end{eg}

\begin{eg}[Trefoil knot]\label{eg:trefoil-two-variable-series-module}
\begin{figure}
\centering
\[
\vcenter{\hbox{
\begin{tikzpicture}
\draw[red] (-0.5, 1.5) to[out=90, in=90] (1.5, 1.5); 
\draw[white, line width=5] (0, 0) to[out=90, in=-90] (1, 1) to[out=90, in=-90] (0, 2) to[out=90, in=-90] (1, 3);
\draw[white, line width=5] (1, 0) to[out=90, in=-90] (0, 1) to[out=90, in=-90] (1, 2) to[out=90, in=-90] (0, 3);

\draw[blue, thick] (0, -1) -- (0, 0);
\draw[blue, thick] (1, 0) to[out=90, in=-90] (0, 1);
\draw[white, line width=5] (0, 0) to[out=90, in=-90] (1, 1);
\draw[blue, thick] (0, 0) to[out=90, in=-90] (1, 1);
\draw[blue, thick] (1, 1) to[out=90, in=-90] (0, 2);
\draw[white, line width=5] (0, 1) to[out=90, in=-90] (1, 2);
\draw[blue, thick] (0, 1) to[out=90, in=-90] (1, 2);
\draw[blue, thick] (1, 2) to[out=90, in=-90] (0, 3);
\draw[white, line width=5] (0, 2) to[out=90, in=-90] (1, 3);
\draw[blue, thick] (0, 2) to[out=90, in=-90] (1, 3);
\draw[blue, thick, ->] (0, 3) -- (0, 4);
\draw[blue, thick] (1, 3) to[out=90, in=90] (2, 3) -- (2, 0) to[out=-90, in=-90] (1, 0);
\draw[white, line width=5] (-0.5, 1.5) to[out=-90, in=-90] (1.5, 1.5); 
\draw[red, ->] (-0.5, 1.5) to[out=-90, in=-90] (1.5, 1.5); 

\node[red, left] at (-0.5, 1.5){$W_n$};
\node[blue, left] at (0, -0.5){$\mathbf{3}_1^r$};
\end{tikzpicture}
}}
\]
\caption{The trefoil knot with Wilson line defect $W_n$}
\label{fig:trefoil}
\end{figure}
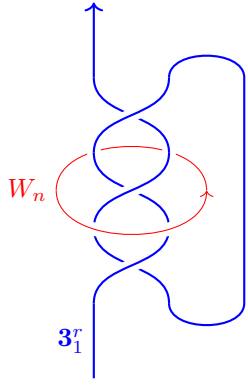
Let $\widehat{Z}_{S^3 \setminus \mathbf{3}_1^r, W_n}(x, q)$ denote the two-variable series associated to the complement of the right-handed trefoil knot, with an insertion of the Wilson line defect $W_n$ colored by $V_n$ that goes around the braid $\sigma_1^3$; see Figure \ref{fig:trefoil}. 
Then, we have
\begin{align*}
&\widehat{Z}_{S^3 \setminus \mathbf{3}_1^r , W_1}(x, q) = \widehat{Z}_{S^3 \setminus \mathbf{3}_1^r}(x, q)\\ 
&\;= -q x^{\frac12} + q^2 x^{\frac52} + q^3 x^{\frac72} -q^6 x^{\frac{11}{2}} -q^8 x^{\frac{13}{2}} +q^{13} x^{\frac{17}{2}} +q^{16} x^{\frac{19}{2}} + O(x^{\frac{21}{2}}),\\
&\widehat{Z}_{S^3 \setminus \mathbf{3}_1^r, W_2}(x, q)\\ 
&\;= -q^{\frac{1}{2}} x^{-\frac12} +q^{\frac{3}{2}} x^{\frac52} +q^{\frac{5}{2}} x^{\frac72} -q^{\frac{11}{2}} x^{\frac{11}{2}} -q^{\frac{15}{2}} x^{\frac{13}{2}} +q^{\frac{25}{2}} x^{\frac{17}{2}} +q^{\frac{31}{2}} x^{\frac{19}{2}} +O(x^{\frac{21}{2}})\\
&\;= q^{\frac{1}{2}} (x^{\frac12}-x^{-\frac12}) + q^{-\frac{1}{2}} \widehat{Z}_{S^3 \setminus \mathbf{3}_1^r}(x, q),\\
&\widehat{Z}_{S^3 \setminus \mathbf{3}_1^r, W_3}(x, q) = x^{\frac32}-x^{-\frac32},\\
&\widehat{Z}_{S^3 \setminus \mathbf{3}_1^r, W_4}(x, q)\\ 
&\;= -q^{-\frac{1}{2}} x^{-\frac52} + q^{-\frac{3}{2}} x^{\frac12} -q^{\frac{1}{2}} x^{\frac72} +q^{\frac{7}{2}} x^{\frac{11}{2}} +q^{\frac{11}{2}} x^{\frac{13}{2}} -q^{\frac{21}{2}} x^{\frac{17}{2}} -q^{\frac{27}{2}} x^{\frac{19}{2}} +O(x^{\frac{21}{2}})\\
&\;= q^{-\frac{1}{2}}(x^{\frac52}-x^{-\frac52}) -q^{-\frac{5}{2}}\widehat{Z}_{S^3 \setminus \mathbf{3}_1^r}(x, q),\\
&\widehat{Z}_{S^3 \setminus \mathbf{3}_1^r, W_5}(x, q)\\ 
&\;= -q^{-1} x^{-\frac72} +q^{-3} x^{-\frac12} -q^{-2} x^{\frac52} +q^{2} x^{\frac{11}{2}} +q^{4} x^{\frac{13}{2}} -q^{9} x^{\frac{17}{2}} -q^{12} x^{\frac{19}{2}} +O(x^{\frac{21}{2}})\\
&\;= -q^{-3}(x^{\frac12}-x^{-\frac12}) +q^{-1}(x^{\frac72}-x^{-\frac72}) -q^{-4} \widehat{Z}_{S^3 \setminus \mathbf{3}_1^r}(x, q),
\end{align*}
and so on. 
That is, 
\begin{align*}
\frac{\widehat{Z}_{S^3 \setminus \mathbf{3}_1^r, W_1}(x, q)}{x^{\frac12}-x^{-\frac12}} &= \frac{\widehat{Z}_{S^3 \setminus \mathbf{3}_1^r}(x, q)}{x^{\frac12}-x^{-\frac12}},\\
q^{\frac{1}{2}} \frac{\widehat{Z}_{S^3 \setminus \mathbf{3}_1^r, W_2}(x, q)}{x^{\frac12}-x^{-\frac12}} &= q\chi_1 + \frac{\widehat{Z}_{S^3 \setminus \mathbf{3}_1^r}(x, q)}{x^{\frac12}-x^{-\frac12}},\\
q \frac{\widehat{Z}_{S^3 \setminus \mathbf{3}_1^r, W_3}(x, q)}{x^{\frac12}-x^{-\frac12}} &= q \chi_3,\\
-q^{\frac{5}{2}} \frac{\widehat{Z}_{S^3 \setminus \mathbf{3}_1^r, W_4}(x, q)}{x^{\frac12}-x^{-\frac12}} &= -q^2 \chi_5 + \frac{\widehat{Z}_{S^3 \setminus \mathbf{3}_1^r}(x, q)}{x^{\frac12}-x^{-\frac12}},\\
-q^{4} \frac{\widehat{Z}_{S^3 \setminus \mathbf{3}_1^r, W_5}(x, q)}{x^{\frac12}-x^{-\frac12}} &= q\chi_1 -q^3\chi_7 + \frac{\widehat{Z}_{S^3 \setminus \mathbf{3}_1^r}(x, q)}{x^{\frac12}-x^{-\frac12}},\\
-q^{\frac{11}{2}} \frac{\widehat{Z}_{S^3 \setminus \mathbf{3}_1^r, W_6}(x, q)}{x^{\frac12}-x^{-\frac12}} &= q \chi_3 -q^{4}\chi_9,\\
q^{8} \frac{\widehat{Z}_{S^3 \setminus \mathbf{3}_1^r, W_7}(x, q)}{x^{\frac12}-x^{-\frac12}} &= -q^2\chi_5 +q^6\chi_{11} + \frac{\widehat{Z}_{S^3 \setminus \mathbf{3}_1^r}(x, q)}{x^{\frac12}-x^{-\frac12}},\\
q^{\frac{21}{2}} \frac{\widehat{Z}_{S^3 \setminus \mathbf{3}_1^r, W_8}(x, q)}{x^{\frac12}-x^{-\frac12}} &= q\chi_1 -q^3\chi_7 +q^8\chi_{13} + \frac{\widehat{Z}_{S^3 \setminus \mathbf{3}_1^r}(x, q)}{x^{\frac12}-x^{-\frac12}},\\
q^{\frac{26}{2}} \frac{\widehat{Z}_{S^3 \setminus \mathbf{3}_1^r, W_9}(x, q)}{x^{\frac12}-x^{-\frac12}} &= q\chi_3 -q^4\chi_9 +q^{10}\chi_{15}.
\end{align*}
Curiously, $\widehat{Z}_{S^3 \setminus \mathbf{3}_1^r, W_{3n}}(x, q)$ all seem to be Laurent polynomials. 

The above computation seems to suggest that the two-variable series module $V_{x,q}(S^3 \setminus \mathbf{3}_1^r)$, as an $R_{\chi}$-module, is spanned by $1$ and $\frac{\widehat{Z}_{S^3 \setminus \mathbf{3}_1^r}(x, q)}{x^{\frac12}-x^{-\frac12}}$; in fact, free of rank 2, which is consistent with an old result of \cite{BullockLoFaro} on skein modules of complements of twist knots: 
\[
V_{x,q}(S^3 \setminus \mathbf{3}_1^r) = R_{\chi} \bigg\{ 1, \frac{\widehat{Z}_{S^3 \setminus \mathbf{3}_1^r}(x, q)}{x^{\frac12}-x^{-\frac12}} \bigg\}.
\]
Note, $2$ is exactly the number of branches of the A-polynomial of $\mathbf{3}_1^r$. 
\end{eg}

\begin{eg}[Figure-eight knot]\label{eg:figure-eight}
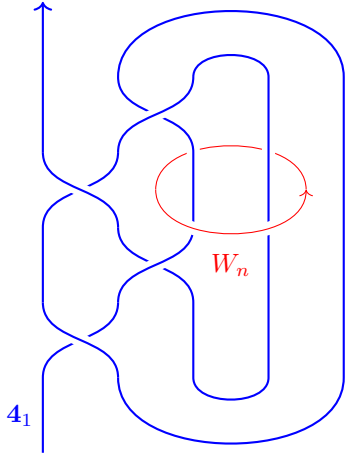
\begin{figure}
\centering
\[
\vcenter{\hbox{
\begin{tikzpicture}
\draw[red] (1.5, 2.5) to[out=90, in=90] (3.5, 2.5); 

\draw[white, line width=5] (0, -1) -- (0, 0) to[out=90, in=-90] (1, 1) to[out=90, in=-90] (2, 2) -- (2, 3) to[out=90, in=-90] (1, 4) to[out=90, in=90] (4, 4) -- (4, 0) to[out=-90, in=-90] (1, 0) to[out=90, in=-90] (0, 1) -- (0, 2) to[out=90, in=-90] (1, 3) to[out=90, in=-90] (2, 4) to[out=90, in=90] (3, 4) -- (3, 0) to[out=-90, in=-90] (2, 0) -- (2, 1) to[out=90, in=-90] (1, 2) to[out=90, in=-90] (0, 3) -- (0, 4) -- (0, 5); 
\draw[blue, thick, ->] (0, -1) -- (0, 0) to[out=90, in=-90] (1, 1) to[out=90, in=-90] (2, 2) -- (2, 3) to[out=90, in=-90] (1, 4) to[out=90, in=90] (4, 4) -- (4, 0) to[out=-90, in=-90] (1, 0) to[out=90, in=-90] (0, 1) -- (0, 2) to[out=90, in=-90] (1, 3) to[out=90, in=-90] (2, 4) to[out=90, in=90] (3, 4) -- (3, 0) to[out=-90, in=-90] (2, 0) -- (2, 1) to[out=90, in=-90] (1, 2) to[out=90, in=-90] (0, 3) -- (0, 4) -- (0, 5); 
\draw[white, line width=5] (1, 0) to[out=90, in=-90] (0, 1);
\draw[blue, thick] (1, 0) to[out=90, in=-90] (0, 1);
\draw[white, line width=5] (1, 2) to[out=90, in=-90] (0, 3);
\draw[blue, thick] (1, 2) to[out=90, in=-90] (0, 3);
\draw[white, line width=5] (1, 1) to[out=90, in=-90] (2, 2);
\draw[blue, thick] (1, 1) to[out=90, in=-90] (2, 2);
\draw[white, line width=5] (1, 3) to[out=90, in=-90] (2, 4);
\draw[blue, thick] (1, 3) to[out=90, in=-90] (2, 4);

\draw[white, line width=5] (1.5, 2.5) to[out=-90, in=-90] (3.5, 2.5); 
\draw[red, ->] (1.5, 2.5) to[out=-90, in=-90] (3.5, 2.5); 

\node[red] at (2.5, 1.5){$W_n$};
\node[blue, left] at (0, -0.5){$\mathbf{4}_1$};
\end{tikzpicture}
}}
\]
\caption{The figure-eight knot with Wilson line defect $W_n$}
\label{fig:figure-eight}
\end{figure}
From the result of \cite{BullockLoFaro} on skein modules of complements of twist knots, $V_{x,q}(S^3 \setminus \mathbf{4}_1)$ must be a free $R_{\chi}$-module of rank 3, which we study below. 
Note, $3$ is precisely the number of branches of the A-polynomial of $\mathbf{4}_1$. 

Let $\widehat{Z}_{S^3 \setminus \mathbf{4}_1, W_n}(x, q)$ denote the two-variable series associated to the complement of the figure-eight knot, with an insertion of the Wilson line on the ``knot $y$'' in \cite{BullockLoFaro} colored by $V_n$; see Figure \ref{fig:figure-eight}.  
Then, 
\begin{align*}
&\widehat{Z}_{S^3 \setminus \mathbf{4}_1, W_1}(x, q)= \widehat{Z}_{S^3 \setminus \mathbf{4}_1}(x, q) \\
&= x^{\frac12}\\
&\; + 2 x^{\frac32}\\
&\; + (q^{-1} + 3 + q) x^{\frac52}\\
&\; + (2q^{-2} +2q^{-1} + 5 + 2q +2q^2) x^{\frac72}\\
&\; + (q^{-4} +3q^{-3} +4q^{-2} +5q^{-1} + 8 + 5q +4q^2 +3q^3 +q^4) x^{\frac92}\\
&\; + O(x^{\frac{11}{2}}),\\
&\widehat{Z}_{S^3 \setminus \mathbf{4}_1, W_2}(x, q) \\
&= 
2q^{\frac{1}{2}} x^{\frac12}\\
&\; + q^{\frac{1}{2}}(3+q) x^{\frac32}\\
&\; + q^{\frac{1}{2}}(q^{-1} + 5 + 2q +2q^2) x^{\frac52}\\
&\; + q^{\frac{1}{2}}(2q^{-2} +3q^{-1} + 8 + 5q +4q^2 +3q^3 + q^4) x^{\frac72}\\
&\; + q^{\frac{1}{2}}(q^{-4} +3q^{-3} +6q^{-2} +7q^{-1} + 14 + 10q +10q^2 +7q^3 +6q^4 +2q^5 +2q^6) x^{\frac92}\\
&\; + O(x^{\frac{11}{2}}),\\
&\widehat{Z}_{S^3 \setminus \mathbf{4}_1, W_3}(x, q) \\
&=
q(2+q) x^{\frac12}\\
&\; + q(2+2q+2q^2) x^{\frac32}\\
&\; + q(3+4q+4q^2+3q^3+q^4) x^{\frac52}\\
&\; + q(q^{-1} +6 +7q +8q^2 +7q^3 +6q^4 +2q^5 +2q^6) x^{\frac72}\\
&\; + q(2q^{-2} +5q^{-1} +11 +14q +15q^2 +15q^3 +14q^4 +11q^5 +7q^6 +4q^7 +3q^8 +q^9) x^{\frac92}\\
&\; + O(x^{\frac{11}{2}}). 
\end{align*}
The two-variable series module $V_{x,q}(S^3 \setminus \mathbf{4}_1)$ must be the free $R_{\chi}$-module spanned by these three two-variable series, divided by $x^{\frac12}-x^{-\frac12}$. 

We observe the following relations among these two-variable series:\footnote{Recall that $\mathbf{4}_1$ is amphichiral, so the mirror of $\mathbf{4}_1$ is itself, while the defect $W$ will map to a different defect. 
The $\widehat{Z}_{S^3 \setminus \mathbf{4}_1, W_2}(x, q^{-1})$ below is really the BPS $q$-series for the insertion of this mirror defect.}
\[
\chi_3 \widehat{Z}_{S^3 \setminus \mathbf{4}_1}(x, q) = 
q^{-\frac{1}{2}} \widehat{Z}_{S^3 \setminus \mathbf{4}_1, W_2}(x, q) +
q^{\frac{1}{2}} \widehat{Z}_{S^3 \setminus \mathbf{4}_1, W_2}(x, q^{-1}),
\]
\begin{align*}
&-q^{-1} \widehat{Z}_{S^3 \setminus \mathbf{4}_1, W_3}(x, q) + q^{-\frac{1}{2}} \chi_3 \widehat{Z}_{S^3 \setminus \mathbf{4}_1, W_2}(x, q) \\
&\quad= 
\widehat{Z}_{S^3 \setminus \mathbf{4}_1}(x, q) + (q^{-\frac{1}{2}} \widehat{Z}_{S^3 \setminus \mathbf{4}_1, W_2}(x, q) + q^{\frac{1}{2}} \widehat{Z}_{S^3 \setminus \mathbf{4}_1, W_2}(x, q^{-1}) - 2).
\end{align*}
Therefore, a basis for $V_{x,q}(S^3 \setminus \mathbf{4}_1)$, as a free $R_{\chi}$-module of rank $3$, can be taken to be
$1$, $\frac{\widehat{Z}_{S^3 \setminus \mathbf{4}_1}(x, q)}{x^{\frac12}-x^{-\frac12}}$, and $\frac{\widehat{Z}_{S^3 \setminus \mathbf{4}_1, W_2}(x, q)}{x^{\frac12}-x^{-\frac12}}$: 
\[
V_{x,q}(S^3 \setminus \mathbf{4}_1) = R_{\chi} 
\bigg\{ 1, 
\frac{\widehat{Z}_{S^3 \setminus \mathbf{4}_1}(x,q)}{x^{\frac12}-x^{-\frac12}}, 
\frac{\widehat{Z}_{S^3 \setminus \mathbf{4}_1, W_2}(x,q)}{x^{\frac12}-x^{-\frac12}}
\bigg\}.
\]

It turns out that the two-variable series above have simple expressions in terms of \emph{inverted Habiro series} \cite{Park_inverted}: 
\begin{align*}
\frac{\widehat{Z}_{S^3 \setminus \mathbf{4}_1}(x,q)}{x^{\frac12}-x^{-\frac12}} &= -\sum_{n\geq 0}\frac{1}{\prod_{0\leq j\leq n}(x+x^{-1} - q^{j} - q^{-j})},\\
\frac{\widehat{Z}_{S^3 \setminus \mathbf{4}_1, W_2}(x,q)}{x^{\frac12}-x^{-\frac12}} &= -\sum_{n\geq 0}\frac{q^{\frac{1}{2}}(1+q^n)}{\prod_{0\leq j\leq n}(x+x^{-1} - q^{j} - q^{-j})},\\
\frac{\widehat{Z}_{S^3 \setminus \mathbf{4}_1, W_3}(x,q)}{x^{\frac12}-x^{-\frac12}} &= -\sum_{n\geq 0}\frac{q^{n+1}(1+q+q^n)}{\prod_{0\leq j\leq n}(x+x^{-1} - q^{j} - q^{-j})}. 
\end{align*}
Hence, another way to describe $V_{x,q}(S^3 \setminus \mathbf{4}_1)$ is that it is spanned by three sequences of inverted Habiro coefficients, given by $\{1\}_{n\geq 0}$, $\{q^n\}_{n\geq 0}$, and $\{q^{2n}\}_{n\geq 0}$. 
\end{eg}

The appearance of $1$ in the two-variable series modules in all three examples discussed above naturally leads to the following question: 
\begin{qn}\label{qn:1-in-Vxq}
Is $1$ always in the two-variable series module $V_{x,q}(S^3 \setminus K)$? 
In other words, is there always an element $W \in \Sk_q^{\mathrm{SL}_2}(S^3 \setminus K)$ of the skein module such that the colored Jones polynomial $\frac{1}{[n]} J_{(K,n),W}(q)$ is always $1$?
\end{qn}

\section{Surgery}\label{sec:surgery}
In this section, we show how to perform surgery to get an analogous map from the skein module of a closed 3-manifold -- which is a finite-dimensional $\mathbb{Q}(q^{\frac14})$-vector space \cite{GunninghamJordanSafronov} -- to some vector space of $q$-series. 

\subsection{Twisted spin$^c$ structures}\label{subsec:twisted-spin-c}
Before getting into the proof, let us first define twisted spin$^c$ structures that appear in the statement of Theorem \ref{mainthm:surgery}. 
Recall that isomorphism classes of principal $SO(3)$-bundles over a $3$-manifold $Y$ are classified by their $w_2 \in H^2(Y;\mathbb{Z}/2)$. 
For any $\alpha \in H_1(Y;\mathbb{Z}/2)$, one way to construct a principal $SO(3)$-bundle $P$ over $Y$ with $w_2(P) = \mathrm{PD}(\alpha)$ is by twisting the $SO(3)$-frame bundle $Fr$ in a tubular neighborhood of a 1-cycle $C_\alpha \subset Y$ representing $\alpha$, in such a way that the clutching map along the meridians of $C_\alpha$ represents the generator of $\pi_1(SO(3)) \cong \mathbb{Z}/2$. 
Let us denote the resulting $SO(3)$-bundle by $Fr(C_\alpha)$. 

\begin{defn}
A \emph{$C_\alpha$-twisted spin$^c$ structure on $Y$} is a lift of $Fr(C_\alpha)$ to a principal $\mathrm{Spin}^c(3)$-bundle.  
\end{defn}
Clearly, when $C_\alpha = 0$, this is nothing but an ordinary spin$^c$ structure. 
We denote the set of $C_\alpha$-twisted spin$^c$ structures on $Y$ by $\mathrm{Spin}^c(Y,C_\alpha)$. 
Just like the usual spin$^c$ structures, $\mathrm{Spin}^c(Y,C_\alpha)$ is an $H^2(Y;\mathbb{Z}) \cong H_1(Y;\mathbb{Z})$-torsor. 
If $C'_\alpha \subset Y$ is another $1$-cycle representing the same class $\alpha \in H_1(Y;\mathbb{Z}/2)$, then any $2$-chain $D\subset Y$ with $\partial D = C_\alpha - C'_\alpha$ induces an isomorphism of $H_1(Y;\mathbb{Z})$-torsors
\[
\Phi_D : \mathrm{Spin}^c(Y,C_\alpha) \cong \mathrm{Spin}^c(Y,C'_\alpha), 
\]
which is compatible with composition of $2$-chains. 
If $D_1$ and $D_2$ are both $2$-chains from $C_\alpha$ to $C'_\alpha$, then $[D_1-D_2] \in H_2(Y;\mathbb{Z}/2)$, and
\[
\Phi_{D_1} - \Phi_{D_2} = \beta([D_1-D_2]),
\]
where $\beta : H_2(Y;\mathbb{Z}/2) \rightarrow H_1(Y;\mathbb{Z})$ is the Bockstein. 
We will sometimes simply write $\mathrm{Spin}^c(Y,\alpha)$ to mean $\mathrm{Spin}^c(Y,C_\alpha)$ for some choice of $C_\alpha$, and call its elements \emph{$\alpha$-twisted spin$^c$ structures}. 
From the discussion above, without choosing $C_\alpha$, $\mathrm{Spin}^c(Y,\alpha)$ is canonically defined only up to gauge transformation (i.e., translation by elements of order $2$ in $H_1(Y;\mathbb{Z})$). 

See Appendix \ref{sec:G-spin-c} for the analogous story for an arbitrary semisimple gauge group $G$. 

\begin{rmk}
There is a useful characterization of $\mathrm{Spin}^c(Y,\alpha)$ when $Y$ is presented as a surgery on a framed link $L = L_1 \sqcup \cdots \sqcup L_s \subset S^3$ with linking matrix $M$. 
Recall from \cite[Sec. 4.2]{GukovManolescu} that
\[
\mathrm{Spin}^c(Y) 
\cong 
\frac{m + 2\mathbb{Z}^s}{2M \mathbb{Z}^s} 
\cong 
\frac{\delta + 2\mathbb{Z}^s}{2M\mathbb{Z}^s},
\]
where $m$ is the framing vector ($m_i = M_{ii}$), and
$\delta$ is the degree vector ($\delta_i = \sum_{j\neq i} M_{ij}$). 
The first isomorphism can be seen by considering the $4$-manifold $W$ with $\partial W = Y$ given by the surgery trace: 
The restriction map $\mathrm{Spin}^c(W) \rightarrow \mathrm{Spin}^c(Y)$ is surjective, 
while the map $\mathrm{Spin}^c(W) \overset{c_1}{\rightarrow} H^2(W;\mathbb{Z}) \cong \mathbb{Z}^s$ given by the first Chern class is injective, with the image consisting exactly of vectors whose mod-$2$ reduction match with $w_2(TW)$; i.e., the characteristic vectors $m+2\mathbb{Z}^s \subset \mathbb{Z}^s \cong H^2(W;\mathbb{Z})$. 
The second isomorphism is given by adding $Mu = m + \delta$, where $u = (1,\cdots,1)$. 

We can describe $\mathrm{Spin}^c(Y,\alpha)$ in a similar way. 
Given a 1-cycle $C_\alpha \subset S^3 \setminus L \subset Y$ representing $\alpha \in H_1(Y;\mathbb{Z}/2)$, we can choose a relative 2-cycle $D_\eta \subset W$ by capping $C_\alpha$ off using meridional disks, representing a class in $\eta \in H_2(W,Y;\mathbb{Z}/2)$ with $\partial\eta = \alpha$. 
Then, if $\Sigma_i \in H_2(W;\mathbb{Z})$ is the capped core of the $i$-th 2-handle, 
\[
\mathrm{PD}(\eta)(\Sigma_i) = \Sigma_i \cdot \eta = \mathrm{lk}(C_\alpha, L_i)\;\textrm{mod } 2.
\]
Let $P_\eta$ be the (isomorphism class of) $SO(4)$-bundle over $W$ which is identified with the usual frame bundle outside a tubular neighborhood of $D_\eta$ but glued across $D_\eta$ in such a way that the clutching map along the meridians of $D_\eta$ represents the generator of $\pi_1(SO(4)) \cong \mathbb{Z}/2$. 
Then, 
\[
w_2(P_\eta) = w_2(TW) + \mathrm{PD}(\eta).
\]
Let $\mathrm{Spin}^c(W,\eta)$ be the set of (isomorphism classes of) lifts of $P_\eta$ to a $\mathrm{Spin}^c(4)$ bundle. 
Then, the restriction map $\mathrm{Spin}^c(W,\eta) \rightarrow \mathrm{Spin}^c(Y,\alpha)$ is surjective, while the first Chern class map
$\mathrm{Spin}^c(W,\eta) \overset{c_1}{\rightarrow} H^2(W;\mathbb{Z}) \cong \mathbb{Z}^s$
is injective, with the image consisting exactly of vectors whose mod-$2$ reduction match with $w_2(TW) + \mathrm{PD}(\eta)$. 
It follows that
\[
\mathrm{Spin}^c(Y, \alpha) 
\cong 
\frac{t(C_\alpha) + m + 2\mathbb{Z}^s}{2M\mathbb{Z}^s}
\cong 
\frac{t(C_\alpha) + \delta + 2\mathbb{Z}^s}{2M\mathbb{Z}^s},
\]
where $t(C_\alpha)$ is the vector with $t(C_\alpha)_i := \mathrm{lk}(C_\alpha, L_i)\;\textrm{mod } 2$. 
\end{rmk}

\subsection{Proof of Theorem \ref{mainthm:surgery}}
We need to show that, if $L' \subset S^3 \setminus K$ is obtained from $L \subset S^3 \setminus K$ by handle-sliding a component of $L$ over $K$ thought of as a $p$-framed knot, then
\[
\mathcal{L}^{(p)} \qty[
(x^{\frac12}-x^{-\frac12})
\widehat{Z}_{S^3 \setminus K, L}(x,q)
]
=
\mathcal{L}^{(p)} \qty[
(x^{\frac12}-x^{-\frac12})
\widehat{Z}_{S^3 \setminus K, L'}(x,q)
].
\]
Here, $\mathcal{L}^{(p)}$ is the $p$-surgery Laplace transform, which is a $\mathbb{Z}[q^{\pm \frac14}]$-linear map defined on each monomial in $x$ by
\[
\mathcal{L}^{(p)} : x^u \mapsto q^d q^{-\frac{1}{p}u^2}, 
\]
where $d \in \mathbb{Q}$ is determined by $p$ and is independent of $u$; see Appendix \ref{sec:gluing}. 
It suffices to consider the case when the defect $L$ is colored by $V_2$. 

A choice of an $\alpha$-twisted spin$^c$ structure $\mathfrak{s} \in \mathrm{Spin}^c(S^3_p(K),\alpha) \cong (\alpha + \mathbb{Z})/p\mathbb{Z}$ (with $\alpha = 0$ if $p$ is odd, and $\alpha \in \frac12 \mathbb{Z}/\mathbb{Z}$ if $p$ is even) simply corresponds to restricting to monomials $x^u$ corresponding to a fixed congruence class $\mathfrak{s} = u \;(\textrm{mod }p)$, and hence we suppress $\mathfrak{s}$ from the notation in this discussion. 

Firstly, note that we have the following identity:
\begin{lem}\label{lem:p-surgery-identity}
For any $u \in \frac{1}{2}\mathbb{Z}$ and $\epsilon \in \{\pm\}$, 
\begin{align}\label{eq:Laplace-identity}
&\mathcal{L}^{(p)}\qty[
(x^{\frac12} - x^{-\frac12})\cdot
(q^{\frac{\epsilon}{2}}\hat{x}^{\frac12} - q^{-\frac{\epsilon}{2}}\hat{x}^{-\frac12})\hat{y}^{\epsilon}\, x^u
]
\\
&=
\mathcal{L}^{(p)}\qty[
(x^{\frac12} - x^{-\frac12})\cdot 
(q^{-\frac{\epsilon}{2}}\hat{x}^{\frac12} - q^{\frac{\epsilon}{2}} \hat{x}^{-\frac12})
(q^{\frac14} \hat{x}^{-\frac{\epsilon}{2}})^{p}
x^u
]. \nonumber
\end{align}
\end{lem}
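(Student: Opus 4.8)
The plan is a direct term-by-term verification, using only that $\mathcal{L}^{(p)}$ is $\mathbb{Z}[q^{\pm 1/4}]$-linear and acts on monomials by $x^v \mapsto q^d q^{-v^2/p}$ with $d$ independent of $v$. Both sides of \eqref{eq:Laplace-identity} are, after stripping the operators off $x^u$, finite $\mathbb{Z}[q^{\pm 1/4}]$-combinations of the three monomials $x^{u+1}, x^u, x^{u-1}$ (on the left) and $x^{w+1}, x^w, x^{w-1}$ with $w := u - \epsilon p/2$ (on the right), so the whole statement reduces to a Gaussian identity.

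First I would expand the left-hand side: applying $\hat y^{\epsilon}$ to $x^u$ gives $q^{\epsilon u} x^u$, and then the operator $q^{\epsilon/2}\hat x^{1/2} - q^{-\epsilon/2}\hat x^{-1/2}$ together with the prefactor $x^{1/2}-x^{-1/2}$ turns $q^{\epsilon u}x^u$ into $q^{\epsilon u}\bigl(q^{\epsilon/2} x^{u+1} + q^{-\epsilon/2} x^{u-1} - (q^{\epsilon/2}+q^{-\epsilon/2}) x^u\bigr)$. Applying $\mathcal{L}^{(p)}$ yields $q^d q^{\epsilon u}\bigl(q^{\epsilon/2}q^{-(u+1)^2/p} + q^{-\epsilon/2}q^{-(u-1)^2/p} - (q^{\epsilon/2}+q^{-\epsilon/2})q^{-u^2/p}\bigr)$.

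For the right-hand side, $(q^{1/4}\hat x^{-\epsilon/2})^p$ sends $x^u$ to $q^{p/4}x^{w}$ with $w = u-\epsilon p/2$, and the operator $q^{-\epsilon/2}\hat x^{1/2} - q^{\epsilon/2}\hat x^{-1/2}$ with the prefactor then produces $q^{p/4}\bigl(q^{-\epsilon/2}x^{w+1} + q^{\epsilon/2}x^{w-1} - (q^{-\epsilon/2}+q^{\epsilon/2})x^{w}\bigr)$. The key algebraic input is the completing-the-square identity $(w+k)^2/p = (u+k)^2/p - \epsilon(u+k) + p/4$ for $k \in \{-1,0,1\}$, valid because $\epsilon^2 = 1$. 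Hence, after applying $\mathcal{L}^{(p)}$, a common factor $q^{-p/4}$ pulls out of the bracket and cancels the overall $q^{p/4}$ coming from $(q^{1/4})^p$, while the linear terms $\epsilon(u+k)$ combine with the coefficients $q^{\mp\epsilon/2}$ to reproduce exactly the expression obtained for the left-hand side (the shifts send $q^{-\epsilon/2+\epsilon(u+1)}$ to $q^{\epsilon u + \epsilon/2}$ and $q^{\epsilon/2+\epsilon(u-1)}$ to $q^{\epsilon u - \epsilon/2}$). Comparing the two finishes the proof.

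There is no genuine obstacle here; the only care needed is bookkeeping of the half-integer powers of $q$ and of the two sign conventions $\epsilon = \pm$. It is worth noting that $q^{\epsilon/2}+q^{-\epsilon/2}$ is symmetric under $\epsilon \mapsto -\epsilon$, so the ``diagonal'' $x^u$-terms on the two sides match automatically; the content of \eqref{eq:Laplace-identity} lies entirely in the matching of the $x^{u\pm 1}$-terms, which is precisely the $q^{p/4}$-cancellation described above.
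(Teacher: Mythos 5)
Your computation is correct and is exactly the "straightforward calculation" the paper leaves to the reader: both sides reduce to three monomials, and the completing-the-square identity $(u+k-\epsilon p/2)^2/p=(u+k)^2/p-\epsilon(u+k)+p/4$ makes the $q^{p/4}$ from $(q^{1/4}\hat{x}^{-\epsilon/2})^p$ cancel and the linear terms recombine with $q^{\mp\epsilon/2}$ to match the left-hand side. No gaps.
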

\begin{proof}
Straightforward calculation. 
\end{proof}

It follows that, using the fusion rules reviewed in Appendix \ref{sec:trivalent-vertices}, we have 
\begin{align*}
\vcenter{\hbox{
\begin{tikzpicture}
\draw[thick, blue, ->] (0, 0) -- (0, 2);
\draw[red] (-1, 0.5) to[out=0, in=-90] (-0.5, 1) to[out=90, in=0] (-1, 1.5);
\node[blue, below] at (0, 0){$V_\infty(x)$};
\node[blue, above] at (0, 2){$V_\infty(x)$};
\end{tikzpicture}
}}
&\overset{\eqref{eq:fusion}}{=}
\sum_{\epsilon \in \{\pm\}}
-(q^{\frac{\epsilon}{2}}x^{\frac{1}{2}} - q^{-\frac{\epsilon}{2}}x^{-\frac{1}{2}})
\vcenter{\hbox{
\begin{tikzpicture}
\draw[thick, blue, ->] (0, 0) -- (0, 2);
\draw[red] (-1, 0.5) -- (0, 0.5);
\draw[red] (-1, 1.5) -- (0, 1.5);
\draw[thick, cyan] (0, 0.5) -- (0, 1.5);
\node[blue, below] at (0, 0){$V_\infty(x)$};
\node[blue, above] at (0, 2){$V_\infty(x)$};
\node[cyan, right] at (0, 1){$V_\infty(q^{\epsilon}x)$};
\end{tikzpicture}
}}
\\
&= 
\sum_{\epsilon \in \{\pm\}}
-(q^{\frac{\epsilon}{2}}x^{\frac{1}{2}} - q^{-\frac{\epsilon}{2}}x^{-\frac{1}{2}})
\hat{y}^{\epsilon}
\vcenter{\hbox{
\begin{tikzpicture}
\draw[thick, blue, ->] (0, 0) -- (0, 2);
\draw[red] (-1, 0.5) -- (0, 0.5);
\draw[red] (-1, 1.5) -- (0, 1.5);
\draw[thick, teal] (0, 0) -- (0, 0.5);
\draw[thick, teal, ->] (0, 1.5) -- (0, 2);
\node[teal, below] at (0, 0){$V_\infty(q^{-\epsilon}x)$};
\node[teal, above] at (0, 2){$V_\infty(q^{-\epsilon}x)$};
\node[blue, right] at (0, 1){$V_\infty(x)$};
\end{tikzpicture}
}}
\\
&\overset{\eqref{eq:Laplace-identity}}{\cong_p}
\sum_{\epsilon \in \{\pm\}}
-(q^{-\frac{\epsilon}{2}}x^{\frac{1}{2}} - q^{\frac{\epsilon}{2}}x^{-\frac{1}{2}})
(q^{\frac{1}{4}}x^{-\frac{\epsilon}{2}})^p
\vcenter{\hbox{
\begin{tikzpicture}
\draw[thick, blue, ->] (0, 0) -- (0, 2);
\draw[red] (-1, 0.5) -- (0, 0.5);
\draw[red] (-1, 1.5) -- (0, 1.5);
\draw[thick, teal] (0, 0) -- (0, 0.5);
\draw[thick, teal, ->] (0, 1.5) -- (0, 2);
\node[teal, below] at (0, 0){$V_\infty(q^{-\epsilon}x)$};
\node[teal, above] at (0, 2){$V_\infty(q^{-\epsilon}x)$};
\node[blue, right] at (0, 1){$V_\infty(x)$};
\end{tikzpicture}
}}
\\
&\overset{\eqref{eq:eigen-value-eq}}{=} 
\sum_{\epsilon \in \{\pm\}}
-(q^{-\frac{\epsilon}{2}}x^{\frac{1}{2}} - q^{\frac{\epsilon}{2}}x^{-\frac{1}{2}})
\vcenter{\hbox{
\begin{tikzpicture}
\draw[red] (0.1, 0.9) to[out=90, in=-90] (-0.1, 1.05);
\draw[red] (0.1, 1.2) to[out=90, in=-90] (-0.1, 1.35);
\draw[white, line width=3] (0, 0) -- (0, 2);
\draw[thick, blue, ->] (0, 0) -- (0, 2);
\draw[red] (-1, 0.5) -- (0, 0.5);
\draw[red] (-1, 1.5) to[out=0, in=180] (-0.1, 0.75);
\draw[white, line width=3] (-0.1, 0.75) to[out=0, in=-90] (0.1, 0.9);
\draw[red] (-0.1, 0.75) to[out=0, in=-90] (0.1, 0.9);
\draw[white, line width=3] (-0.1, 1.05) to[out=90, in=-90] (0.1, 1.2);
\draw[red] (-0.1, 1.05) to[out=90, in=-90] (0.1, 1.2);
\draw[red] (-0.1, 1.35) to[out=90, in=180] (0, 1.5);
\draw[thick, teal] (0, 0) -- (0, 0.5);
\draw[thick, teal, ->] (0, 1.5) -- (0, 2);
\node[teal, below] at (0, 0){$V_\infty(q^{-\epsilon}x)$};
\node[teal, above] at (0, 2){$V_\infty(q^{-\epsilon}x)$};
\node[blue, right] at (0.3, 1){$V_\infty(x)$};
\node[red, right] at (0.1, 1.5){$p\text{ full twists}$};
\end{tikzpicture}
}}
\\
&\overset{\eqref{eq:fusion}}{=}
\vcenter{\hbox{
\begin{tikzpicture}
\draw[red] (0.1, 0.9) to[out=90, in=-90] (-0.1, 1.05);
\draw[red] (0.1, 1.2) to[out=90, in=-90] (-0.1, 1.35);
\draw[white, line width=3] (0, 0) -- (0, 2);
\draw[thick, blue, ->] (0, 0) -- (0, 2);
\draw[red] (-1, 0.5) -- (-0.2, 0.5) to[out=0, in=90] (-0.1, 0.4) -- (-0.1, 0);
\draw[red] (-1, 1.5) to[out=0, in=180] (-0.1, 0.75);
\draw[white, line width=3] (-0.1, 0.75) to[out=0, in=-90] (0.1, 0.9);
\draw[red] (-0.1, 0.75) to[out=0, in=-90] (0.1, 0.9);
\draw[white, line width=3] (-0.1, 1.05) to[out=90, in=-90] (0.1, 1.2);
\draw[red] (-0.1, 1.05) to[out=90, in=-90] (0.1, 1.2);
\draw[red] (-0.1, 1.35) -- (-0.1, 2);
\node[blue, right] at (0.3, 1){$V_\infty(x)$};
\node[red, right] at (0.1, 1.5){$p\text{ full twists}$};
\end{tikzpicture}
}},
\end{align*}
where $\cong_p$ means the two sides are equal once we apply the $p$-surgery formula. 
In the last equality, we are first sliding the top trivalent junction along $K$ -- using \eqref{eq:sliding}, see also Remark \ref{rmk:inverted-sliding} -- until it is just below the bottom trivalent junction and then using \eqref{eq:fusion} to detach $L'$ from $K$. 
Therefore, our map is invariant under handle slide. 
\qed

\begin{rmk}
It should be possible to extend this argument to rational surgery by analyzing the fusion rules for torus braids. 
Moreover, one can likely adapt this proof to show well-definedness -- i.e., invariance under handle slides -- of $\widehat{Z}$ of closed 3-manifolds presented by surgery on a framed link, assuming convergence of the $q$-series. 
We plan to investigate this in future work. 
\end{rmk}

\begin{rmk}
In case of integer homology spheres, our map $\widehat{Z}$ can be thought of as the $q$-series analog of a recently constructed map \cite{GaroufalidisLe} from the skein module to the Habiro ring (i.e., functions defined near roots of unity). 
We expect that our map provides an analytic continuation of their map in an appropriate sense. 
\end{rmk}

\subsection{$R_{\chi}$-module structure vs surgery}\label{subsec:R_chi-action-surgery}
We have previously seen in Section \ref{subsec:two-variable-series-module-examples} that the two-variable series modules $V_{x,q}(S^3 \setminus K)$ are naturally modules over $R_{\chi}$, with the module structure given by an insertion of a Wilson line along the meridian. 
Here, we briefly discuss how the $R_{\chi}$-module structure interacts with the Laplace transform, focusing on the $-1$-surgery, as it will be used in the examples below. 

Recall from \cite[Prop.~4.1.5]{Park-thesis} that the $-1$-surgery Laplace transform acts on each inverted Habiro factors by
\[
\qty(\frac{1}{\prod_{1\leq j\leq n}(x+x^{-1}-q^{j}-q^{-j})})
\bigg\vert_{x^u \mapsto q^{u^2}} 
= \frac{q^{n^2}}{(q^{n+1};q)_n},
\]
where $(\cdots)\vert_{x^u \mapsto q^{u^2}}$ in the left-hand side means to first expand $(\cdots)$ into a power series in $x$ and then to replace each monomial $x^u$ by $q^{u^2}$. 
It turns out, there are similar identities even when the inverted Habiro factor is multiplied by an element of $R_\chi$: 
\begin{lem}\label{lem:R_chi-action-surgery}
There are polynomials $P_m^{\chi}(t) \in \mathbb{Z}[q^{\pm \frac14}][t]$ of degree $m-1$ such that
\[
\qty(\frac{\chi_{m}}{\prod_{1\leq j\leq n}(x+x^{-1}-q^{j}-q^{-j})})
\bigg\vert_{x^u \mapsto q^{u^2}}  = \frac{q^{n^2}}{(q^{n+1};q)_n} P_m^{\chi}(q^{-n}).
\]
In particular, 
\[
\qty(\frac{\chi_{m}}{\prod_{1\leq j\leq n}(x+x^{-1}-q^{j}-q^{-j})})
\bigg\vert_{x^u \mapsto q^{u^2}} 
\in 
\mathbb{Z}[q^{\pm \frac14}]
\left\{
\frac{q^{n^2-\lambda n}}{(q^{n+1};q)_n} \;\bigg\vert\; 0\leq \lambda \leq m-1
\right\},
\]
with the coefficients in $\mathbb{Z}[q^{\pm \frac14}]$ independent from $n$. 
\end{lem}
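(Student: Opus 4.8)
The plan is to prove the identity by induction on $m$ in steps of two, carrying along the normal form $T_n^{(m)} = \Theta_n\,P_m^\chi(q^{-n})$, where I write $T_n^{(m)}$ for the left-hand side, $A_j := x + x^{-1} - q^{j} - q^{-j}$ for the $j$-th inverted Habiro factor, and $\Theta_n := \frac{q^{n^2}}{(q^{n+1};q)_n}$ (so \cite[Prop.~4.1.5]{Park-thesis} reads $T_n^{(1)} = \Theta_n$, i.e. $P_1^\chi = 1$). The engine is the Chebyshev/Clebsch--Gordan recurrence $\chi_{m+2} = \chi_m\,(x+x^{-1}) - \chi_{m-2}$, valid with the conventions $\chi_0 = 0$ and $\chi_{-1} = -1$, together with $x+x^{-1} = A_n + (q^n+q^{-n})$. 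Multiplying by $\prod_{1\le j\le n}A_j^{-1}$ and using $A_n\cdot\prod_{1\le j\le n}A_j^{-1} = \prod_{1\le j\le n-1}A_j^{-1}$ gives, as formal Laurent series in $x$ and for $n\ge 1$,
\[
\frac{\chi_{m+2}}{\prod_{1\le j\le n}A_j}
= \frac{\chi_m}{\prod_{1\le j\le n-1}A_j}
+ (q^n+q^{-n})\,\frac{\chi_m}{\prod_{1\le j\le n}A_j}
- \frac{\chi_{m-2}}{\prod_{1\le j\le n}A_j};
\]
applying the $\mathbb{Z}[q^{\pm 1/4}]$-linear transform $x^u\mapsto q^{u^2}$ termwise yields the recursion $T_n^{(m+2)} = T_{n-1}^{(m)} + (q^n+q^{-n})\,T_n^{(m)} - T_n^{(m-2)}$ for $n\ge 1$. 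The other ingredient I would record is the elementary $q$-Pochhammer identity $\Theta_{n-1} = \Theta_n\,R(q^{-n})$ with $R(t) := q t^2 + q t - 1 - t^{-1}$, which is what reabsorbs the level shift $n\mapsto n-1$ (coming from the "missing" factor $A_n$) into a Laurent polynomial in $q^{-n}$.

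The base cases are $m=1$ (Prop.~4.1.5), $m=0$ and $m=-1$ (immediate: $P_0^\chi = 0$, $P_{-1}^\chi = -1$), and $m=2$, which needs an honest Laplace-transform computation. For $m=2$ I would write $\chi_2 = x^{1/2}+x^{-1/2}$ and $\prod_{1\le j\le n}A_j^{-1} = \sum_k b_{n,k}x^k$, so that $T_n^{(2)} = q^{1/4}\sum_k b_{n,k}q^{k^2}(q^k+q^{-k})$; rewriting $q^{k^2\pm k} = q^{(k\pm1/2)^2-1/4}$ turns the two pieces into neighbouring partial-theta sums whose tails cancel, leaving $T_n^{(2)} = \Theta_n\,q^{1/4}(q^{-n}+1)$, i.e. $P_2^\chi(t) = q^{1/4}(t+1)$, of degree $1 = m-1$. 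This can alternatively be read off from the companion statement in \cite{Park-thesis} about $\widehat Z$ of the meridian $W_2$, or cross-checked against the $W_2$-formulas in Example~\ref{eg:figure-eight}.

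For the inductive step, assume $T_n^{(m)} = \Theta_n P_m^\chi(q^{-n})$ and $T_n^{(m-2)} = \Theta_n P_{m-2}^\chi(q^{-n})$ for all $n\ge 0$, with $\deg P_m^\chi = m-1$. Substituting into the recursion and using $\Theta_{n-1} = \Theta_n R(q^{-n})$ together with $P_m^\chi(q^{-(n-1)}) = P_m^\chi(q\cdot q^{-n})$ gives $T_n^{(m+2)} = \Theta_n\,\widetilde P_{m+2}(q^{-n})$ for $n\ge 1$, where
\[
\widetilde P_{m+2}(t) := R(t)\,P_m^\chi(q t) + (t+t^{-1})\,P_m^\chi(t) - P_{m-2}^\chi(t).
\]
A priori this is a Laurent polynomial of lowest degree $\ge -1$, but its coefficient of $t^{-1}$ is $-P_m^\chi(0) + P_m^\chi(0) = 0$ (where the sign of the $t^{-1}$ term of $R$ is essential), so it is a genuine polynomial; the unique contribution in degree $m+1$ comes from $R(t)P_m^\chi(qt)$ and equals $q^{m}\cdot(\text{leading coeff of }P_m^\chi)\cdot t^{m+1}$, a nonzero element of $\mathbb{Z}[q^{\pm1/4}]$, so $\deg\widetilde P_{m+2} = m+1$ and all coefficients stay in $\mathbb{Z}[q^{\pm1/4}]$. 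It then remains to match the boundary value $n=0$: $T_0^{(m+2)} = \chi_{m+2}\big\vert_{x^u\mapsto q^{u^2}}$ must equal $\widetilde P_{m+2}(1)$, which I would verify either directly or by noting that the recursion, and the $q$-Pochhammer identity in its degenerate ($0/0$) form resolved by a limit, persist at $n=0$ once one sets $\prod_{1\le j\le -1}A_j := A_0^{-1} = (x^{1/2}-x^{-1/2})^{-2}$. Setting $P_{m+2}^\chi := \widetilde P_{m+2}$ closes the induction, and the "In particular" clause follows by expanding $P_m^\chi(t) = \sum_{0\le\lambda\le m-1}c_\lambda t^\lambda$ and distributing $\Theta_n$.

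The step I expect to be the real obstacle is the $m=2$ base case: $m=1$ is quoted and $m=0,-1$ are formal, whereas $m=2$ genuinely hinges on the partial-theta cancellation, and arranging that cancellation uniformly in $n$ — not just for small $n$ — is the delicate point. Everything afterwards is the bookkeeping of the $t^{-1}$-cancellation, the degree count, and the $n=0$ boundary value.
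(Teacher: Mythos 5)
Your proof is correct and follows essentially the same route as the paper: the Chebyshev recurrence $\chi_{m+2}+\chi_{m-2}=(x+x^{-1})\chi_m$ combined with the splitting $x+x^{-1}=(x+x^{-1}-q^{n}-q^{-n})+(q^{n}+q^{-n})$ yields exactly the paper's recurrence $P_{m+2}^{\chi}(t)+P_{m-2}^{\chi}(t)=(t+t^{-1})P_m^{\chi}(t)+(qt^2-1)(1+t^{-1})P_m^{\chi}(qt)$ (your $R(t)$ is $(qt^2-1)(1+t^{-1})$), with the same $O(t^{-1})$-cancellation; your extra bookkeeping on degrees and the $n=0$ boundary only makes explicit what the paper leaves implicit. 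The one point where you diverge is the $m=2$ base case, which the paper settles by citing \cite[Lem.~1]{CCKPS_3d-modularity-revisited} rather than redoing the partial-theta computation you sketch; your stated value $P_2^{\chi}(t)=q^{\frac14}(1+t)$ agrees with it.
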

\begin{proof}
For $m=1$, this is \cite[Prop.~4.1.5]{Park-thesis}, and for $m=2$, this is \cite[Lem.~1]{CCKPS_3d-modularity-revisited}. 
For general $m$, it suffices to observe that there is a recurrence relation for $P_m^{\chi}(t)$. 
That is, if we set $H_n := \frac{1}{\prod_{1\leq j\leq n}(x+x^{-1}-q^{j}-q^{-j})}$, we have
\[
(\chi_{m+2} + \chi_{m-2})H_n = (x+x^{-1}) \chi_m H_n = (q^n + q^{-n})\chi_m H_{n} + \chi_m H_{n-1},
\]
and thus 
\[
P_{m+2}^{\chi}(t) + P_{m-2}^{\chi}(t) = 
(t+t^{-1})P_m^{\chi}(t) + 
(q t^2-1)(1+t^{-1}) P_m^{\chi}(qt).
\]
Note, the right-hand side of the recurrence is a polynomial in $t$, since the terms of $O(t^{-1})$ cancel out. 
\end{proof}
Explicitly, for the first few values of $m$, 
\begin{align*}
P_1^{\chi}(t) &= 1, \\
P_2^{\chi}(t) &= q^{\frac{1^2}{4}}(1+t), \\
P_3^{\chi}(t) &= q^{\frac{2^2}{4}}((1+q^{-1})t +t^2), \\
P_4^{\chi}(t) &= q^{\frac{3^2}{4}}(-q^{-1} +q^{-2}t +(1+q^{-1}+q^{-2})t^2 +t^3), \\
P_5^{\chi}(t) &= q^{\frac{4^2}{4}}
(-q^{-2} 
-(q^{-1}+q^{-2})t
+(q^{-2}+q^{-3}+q^{-4})t^2 \\
&\qquad\qquad\qquad\qquad
+(1+q^{-1}+q^{-2}+q^{-3})t^3
+t^4).
\end{align*}
Given a sequence of inverted Habiro coefficients $\{a_{-1-n}(q)\}_{n\geq 0}$, the new sequence formed by $\{q^{\lambda n} a_{-1-n}(q)\}_{n\geq 0}$ for some $\lambda \in \mathbb{Z}$ is sometimes called a \emph{descendant} of the original sequence. 
Hence, the observation above may be rephrased as: the $R_\chi$-action generates the descendant $q$-series.

\subsection{Examples}
For $Y = S^3_p(K)$ and a choice of $\alpha \in H_1(Y;\mathbb{Z}/2)$ and an $\alpha$-twisted spin$^c$ structure $\mathfrak{s} \in \mathrm{Spin}^c(Y, \alpha)$, let the \emph{$q$-series module} $V_q(Y)_{\mathfrak{s}}$ be the image of the skein module $\Sk_q^{\mathrm{SL}_2}(Y)_\alpha$ under $\widehat{Z}_\mathfrak{s}$. 
After tensoring with $\mathbb{Q}(q^{\frac14})$, this is always a finite-dimensional $\mathbb{Q}(q^{\frac14})$-vector space. 
In the examples below, we will consider integer homology spheres and omit $\mathfrak{s}$ from the notation since there is a unique choice.

\begin{eg}[$-1$-surgery on $\mathbf{3}_1^l$]
Let $Y = S^3_{-1}(\mathbf{3}_1^l) = \Sigma(2,3,5)$, the Poincar\'e homology sphere. 
It has $3$ $\mathrm{SL}_2(\mathbb{C})$-flat connections (all of which are $\mathrm{SU}(2)$-flat connections), and in fact
\[
\dim_{\mathbb{Q}(q^{\frac14})} \Sk_q^{\mathrm{SL}_2}(\Sigma(2,3,5)) = 3, 
\]
see, e.g., \cite{DKS_Seifert}. 
The left-handed trefoil is the mirror of the right-handed one, so its two-variable series module can be obtained from that of the right-handed one (Example \ref{eg:trefoil-two-variable-series-module}) by replacing $q$ by $q^{-1}$: 
\[
V_{x,q}(S^3 \setminus \mathbf{3}_1^l) = R_{\chi} \bigg\{ 1, \frac{\widehat{Z}_{S^3 \setminus \mathbf{3}_1^l}(x, q)}{x^{\frac12}-x^{-\frac12}} \bigg\}. 
\]
Multiplying them by $(x^{\frac12}-x^{-\frac12})^2$ and applying the $-1$-surgery Laplace transform, together with Lemma \ref{lem:R_chi-action-surgery}, we have
\begin{align*}
V_q(\Sigma(2,3,5)) &= \mathbb{Z}[q^{\pm\frac14}] \left\{ 2(1-q),\; \chi_0^{\lambda}(q^{-1}) := \sum_{n\geq 0} \frac{(-1)^{n}q^{\frac{n(3n-1)}{2} + \lambda n}}{(q^{n+1};q)_{n}} \;\bigg\vert\; \lambda \leq 0\right\},
\end{align*}
where $\chi_0^{\lambda}(q^{-1})$ are the descendants of the mirror ($q \leftrightarrow q^{-1}$) of the order $5$ mock theta function $\chi_0(q)$. 
Let $\hat{x}$ (resp., $\hat{y}$) be an operator acting on the descendants by multiplication by $q^{\lambda}$ (resp., by shifting $\lambda$ to $\lambda+1$), so that $\hat{y}\hat{x} = q \hat{x} \hat{y}$. 
Then, these descendants satisfy the following degree $3$ inhomogeneous $q$-difference equation
\begin{equation*}
-q\qty(
(1+\hat{y})(1-q^{-1} \hat{y}^2) + q \hat{x} \hat{y}^3
)\,
\chi_0^{\lambda}(q^{-1}) = 2(1-q), 
\end{equation*}
and the corresponding degree $4$ homogeneous $q$-difference equation
\begin{equation*}
\qty(
1 -  (1+q^{-1}) \hat{y}^2 + q \hat{x} \hat{y}^3 + (q^{-1}-q^2 \hat{x}) \hat{y}^4
)\,
\chi_0^{\lambda}(q^{-1}) = 0
\end{equation*}
obtained by acting by $(1-\hat{y})$ on both sides. 
Note that, when $\lambda = -3$, the above equation reduces to the following degree $3$ linear relation:
\[
\chi_0^{-3}(q^{-1}) - (1+q^{-1}) \chi_0^{-1}(q^{-1}) + q^{-2} \chi_0^{0}(q^{-1}) = 0.
\]
It follows that
\begin{align*}
V_q(\Sigma(2,3,5)) &= \mathbb{Z}[q^{\pm\frac14}] \left\{ \chi_0^{\lambda}(q^{-1}) \;\bigg\vert\; \lambda \leq 0 \right\} \\
&= \mathbb{Z}[q^{\pm\frac14}] \left\{ \chi_0^{\lambda_0}(q^{-1}), \chi_0^{\lambda_0 -1}(q^{-1}), \chi_0^{\lambda_0 -2}(q^{-1})\right\}
\end{align*}
for any $\lambda_0 \leq 0$, 
consistent with the fact that 
\[
\dim_{\mathbb{Q}(q^{\frac14})} V_q(\Sigma(2,3,5)) \leq \dim_{\mathbb{Q}(q^{\frac14})} \Sk_q^{\mathrm{SL}_2}(\Sigma(2,3,5)) =3.
\]
\end{eg}

\begin{eg}[$-1$-surgery on $\mathbf{3}_1^r$]\label{eg:Sigma(2,3,7)}
Let $Y = S^3_{-1}(\mathbf{3}_1^r) = \Sigma(2,3,7)$. 
It has $4$ $\mathrm{SL}_2(\mathbb{C})$-flat connections (with 3 of them being $\mathrm{SU}(2)$-flat connections and the remaining one being complex), and in fact 
\[
\dim_{\mathbb{Q}(q^{\frac14})} \Sk_q^{\mathrm{SL}_2}(\Sigma(2,3,7)) = 4, 
\]
see, e.g., \cite{DKS_Seifert}. 
Applying the $-1$-surgery Laplace transform to $V_{x,q}(S^3 \setminus \mathbf{3}_1^r)$ (Example \ref{eg:trefoil-two-variable-series-module}), we have
\[
V_q(\Sigma(2,3,7)) = \mathbb{Z}[q^{\pm \frac14}] \left\{ 
2(1-q),\; 
\mathcal{F}_0^{\lambda}(q^{-1}) := 
\sum_{n\geq 0} \frac{(-1)^n q^{\frac{n(n+1)}{2} + \lambda n}}{(q^{n+1};q)_n} 
\;\bigg\vert\; \lambda \leq 0
\right\},
\]
where $\mathcal{F}_0^{\lambda}(q^{-1})$ are the descendants of the mirror ($q \leftrightarrow q^{-1}$) of the order $7$ mock theta function $\mathcal{F}_0(q)$. 
They satisfy the following degree $3$ inhomogeneous $q$-difference equation
\[
-q((1+\hat{y})(1-q^{-1}\hat{y}^2) + q \hat{x} \hat{y})
\mathcal{F}_0^{\lambda}(q^{-1}) = 2(1-q),
\]
and it follows that
\begin{align*}
V_q(\Sigma(2,3,7)) &= \mathbb{Z}[q^{\pm \frac14}]
\left\{ 
\mathcal{F}_0^{\lambda}(q^{-1})
\;\bigg\vert\; \lambda \in \mathbb{Z}
\right\} \\
&= 
\mathbb{Z}[q^{\pm\frac14}] \left\{ \mathcal{F}_0^{\lambda_0}(q^{-1}), \mathcal{F}_0^{\lambda_0 +1}(q^{-1}), \mathcal{F}_0^{\lambda_0 +2}(q^{-1}), \mathcal{F}_0^{\lambda_0 +3}(q^{-1})\right\}
\end{align*}
for any $\lambda_0 \in \mathbb{Z}$, consistent with the fact that 
\[
\dim_{\mathbb{Q}(q^{\frac14})} V_q(\Sigma(2,3,7)) \leq \dim_{\mathbb{Q}(q^{\frac14})} \Sk_q^{\mathrm{SL}_2}(\Sigma(2,3,7)) =4.
\]
\end{eg}

\begin{eg}[$-1$-surgery on $\mathbf{4}_1$]
Let $Y = S^3_{-1}(\mathbf{4}_1) = -\Sigma(2,3,7)$, which is the orientation reversal of $\Sigma(2,3,7)$ studied in Example \ref{eg:Sigma(2,3,7)}. 
Applying the $-1$-surgery Laplace transform to $V_{x,q}(S^3 \setminus \mathbf{4}_1)$ from Example \ref{eg:figure-eight}, we have
\[
V_q(-\Sigma(2,3,7)) = \mathbb{Z}[q^{\pm \frac14}]
\left\{ 
2(1-q),\;
\mathcal{F}_0^\lambda(q) := \sum_{n\geq 0} \frac{q^{n^2 - \lambda n}}{(q^{n+1};q)_n} \;\bigg\vert\; \lambda \geq -2 
\right\}.
\]
Thanks to the degree $3$ inhomogeneous $q$-difference equation 
\begin{equation}\label{eq:F-inhom-qdiff}
\qty(
(1+\hat{y})(1-q \hat{y}^2)
- q^{-1} \hat{x}^{-1}\hat{y}
)
\mathcal{F}_0^{\lambda}(q)
=
2(1-q), 
\end{equation}
it follows that 
\begin{align*}
V_q(-\Sigma(2,3,7)) &= \mathbb{Z}[q^{\pm \frac14}]
\left\{ 
\mathcal{F}_0^{\lambda}(q)
\;\bigg\vert\; \lambda \in \mathbb{Z}
\right\} \\
&= 
\mathbb{Z}[q^{\pm\frac14}] \left\{ \mathcal{F}_0^{\lambda_0}(q), \mathcal{F}_0^{\lambda_0 +1}(q), \mathcal{F}_0^{\lambda_0 +2}(q), \mathcal{F}_0^{\lambda_0 +3}(q)\right\}
\end{align*}
for any $\lambda_0 \in \mathbb{Z}$. 
As expected, this is the mirror ($q \leftrightarrow q^{-1}$) of $V_q(\Sigma(2,3,7))$. 

\end{eg}

Analogously to Question \ref{qn:1-in-Vxq}, we ask the following:
\begin{qn}
Is $1$ always in $V_q(Y)$ (after tensoring with $\mathbb{Q}(q^{\frac14})$)? 
\end{qn}

\section{Modularity and duality}\label{sec:modularity}
In this section, we will show that the $q$-series module $V_q(\pm\Sigma(2,3,7))$ discussed in the previous section extends to a matrix-valued holomorphic quantum modular form. 
The first step is the factorization of an appropriate state integral \cite{GK, GGMW, Wheeler-thesis}, which we review below. 

\subsection{From $q$-series to state integrals and back}
Recall that the \emph{non-compact quantum dilogarithm} $\Phi_{\mathsf{b}}(x)$ is defined as follows. 
\begin{defn}
\[
\Phi_{\mathsf{b}}(x) := 
\exp\qty(\int_{-\infty}^{+\infty} \frac{e^{-2i xw}}{4 \sinh(w \mathsf{b}) \sinh(w \mathsf{b}^{-1})}\frac{dw}{w})
= \frac{(e^{2\pi (x+c_{\mathsf{b}})\mathsf{b}}; \mathbf{e}(\mathsf{b}^2))_\infty}{(e^{2\pi (x-c_{\mathsf{b}})\mathsf{b}^{-1}}; \mathbf{e}(-\mathsf{b}^{-2}))_\infty},
\]
where $c_\mathsf{b} := \frac{i}{2}(\mathsf{b} + \mathsf{b}^{-1})$, and the first expression is valid for $|\Im(x)| < |\Im(c_{\mathsf{b}})|$, 
while the second expression is valid for $\Im(\mathsf{b}^2) > 0$. 
\end{defn}
We also set
\[
\tau := \mathsf{b}^2,\quad q:= e^{2\pi i \tau},\quad \tilde{q} := e^{2\pi i(-\frac{1}{\tau})}.
\]

We would like to find state integrals\footnote{Integrals involving products and ratios of $\Phi_{\mathsf{b}}$ and a Gaussian term are often called ``state integrals''.} realizing the holomorphic quantum modularity of the $q$-series module
\[
V_q(-\Sigma(2,3,7)) = \mathbb{C}(q^{\frac14}) 
\left\{ \mathcal{F}_0^{-\lambda}(q) = \sum_{n\geq 0} \frac{q^{n^2 + \lambda n}}{(q^{n+1};q)_n} \;\bigg\vert\; \lambda \in \mathbb{Z} \right\}. 
\]
The idea is to start from the $q$-hypergeometric series, and replace $q^{k n^2}$ by $e^{2\pi i (-kx^2)}$ ($k=1$ in our case), and $\frac{1}{(q;q)_{ln}}$ by $\Phi_{\mathsf{b}}(l(x-c_{\mathsf{b}}) + c_{\mathsf{b}})$ ($l = 1, 2$ in our case). 
This gives the following ``homogeneous'' state integral:\footnote{
We thank Campbell Wheeler for correspondences and for suggesting these state integrals. 
} 
\[
S_{\lambda,\mu}(\tau) := 
\int_{\mathbb{R}+i\epsilon} \frac{\Phi_{\mathsf{b}}(2x - c_{\mathsf{b}})}{\Phi_{\mathsf{b}}(x)} 
e^{2\pi i (-x^2 - i (\lambda \mathsf{b} + \mu \mathsf{b}^{-1})x)} dx,
\]
where we have also inserted an extra factor $e^{2\pi i(-i(\lambda \mathsf{b} + \mu \mathsf{b}^{-1})x)}$ dependent on a pair of integers $\lambda, \mu \in \mathbb{Z}$. 
This state integral can be computed by pushing the contour to $i\infty$, collecting the residues at the poles of $\Phi_{\mathsf{b}}(2x - c_{\mathsf{b}})$ that are not canceled by the poles of $\Phi_{\mathsf{b}}(x)$.\footnote{This is possible because the contour integral along a big semicircle vanishes in the large radius limit; the absolute value of the integrand is $O(e^{-k|x|^2})$, where $k$ is
$(2^2-1^2)-2(1) = 1>0$ for $\arg x < \frac{\pi}{2}$
and $0 + 2(1) = 2 > 0$ for $\arg x > \frac{\pi}{2}$.} 
The poles are at 
\[
x_{\frac{m}{2},\frac{n}{2}} := i (\frac{m}{2}+\frac{1}{2})\mathsf{b} + i(\frac{n}{2} + \frac{1}{2})\mathsf{b}^{-1},
\]
with $m, n\geq 0$, not both even, and they are all simple. 
A straightforward calculation shows
\begin{align*}
S_{\lambda,\mu}(\tau)&= \sum_{\substack{m,n \geq 0 \\ \text{not both }m,n\text{ even}}} 
\mathrm{Res}_{x = x_{\frac{m}{2}, \frac{n}{2}}} 
\qty(
\frac{\Phi_{\mathsf{b}}(2x - c_{\mathsf{b}})}{\Phi_{\mathsf{b}}(x)} 
e^{2\pi i (-x^2 - i (\lambda \mathsf{b} + \mu \mathsf{b}^{-1})x)}
) \\
&= 
\sum_{\substack{m,n \geq 0 \\ \text{not both }m,n\text{ even}}} \mathrm{Res}_{x = 0} 
\left(
\frac{\Phi_{\mathsf{b}}(2x + 2x_{\frac{m}{2}, \frac{n}{2}} - c_{\mathsf{b}})}{\Phi_{\mathsf{b}}(x + x_{\frac{m}{2}, \frac{n}{2}})} \right.
\\
&\qquad\qquad\qquad\qquad\times \left.
e^{-2\pi i (x+ x_{\frac{m}{2}, \frac{n}{2}})^2 + 2\pi \mathsf{b}\lambda (x+ x_{\frac{m}{2}, \frac{n}{2}}) + 2\pi \mathsf{b}^{-1} \mu (x+ x_{\frac{m}{2}, \frac{n}{2}})}
\right)
\\
&= 
\sum_{\substack{m,n \geq 0 \\ \text{not both }m,n\text{ even}}} \mathrm{Res}_{x = 0} 
\bigg(
\frac{1}{1 - e^{4\pi \mathsf{b}^{-1} x}} \\
&\qquad\times
\frac{
(q^{m+1} e^{4\pi \mathsf{b} x}; q)_\infty
}{
((-1)^{n} q^{\frac{m}{2}+1} e^{2\pi \mathsf{b}x}; q)_\infty
}
\frac{
((-1)^{m} \tilde{q}^{-\frac{n}{2}} e^{2\pi \mathsf{b}^{-1}x}; \tilde{q})_\infty
}{
(\tilde{q}^{-1} e^{4\pi \mathsf{b}^{-1} x}; \tilde{q}^{-1})_{n}
(\tilde{q} e^{4\pi \mathsf{b}^{-1} x}; \tilde{q})_\infty
}
\\
&\qquad \times 
e^{-2\pi i x^2 + 2\pi ((m+1 + \lambda)\mathsf{b} + (n+1 + \mu)\mathsf{b}^{-1}) x}
) 
\bigg)\\ 
&\qquad \times 
(-1)^{(n+1)\lambda + (m+1)\mu + (m+1)(n+1)} q^{(\frac{m+1}{2})^2 + \frac{m+1}{2}\lambda} \tilde{q}^{-(\frac{n+1}{2})^2-\frac{n+1}{2}\mu}\\
&= 
-\frac{\mathsf{b}}{4\pi}
\sum_{\substack{m,n \geq 0 \\ \text{not both }m,n\text{ even}}} 
(-1)^{(n+1)\lambda}
q^{(\frac{m+1}{2})^2 + \frac{m+1}{2}\lambda} 
\frac{
(q^{m+1}; q)_\infty 
}{
((-1)^{n} q^{\frac{m}{2}+1}; q)_\infty
} \\
&\qquad\qquad\qquad\qquad \times
(-1)^{(m+1)\mu}
\tilde{q}^{-(\frac{n+1}{2})^2-\frac{n+1}{2}\mu}
\frac{
((-1)^{m} \tilde{q}^{-\frac{n}{2}}; \tilde{q})_\infty
}{
(\tilde{q}^{-1} ; \tilde{q}^{-1})_{n}
(\tilde{q} ; \tilde{q})_\infty
}.
\end{align*}
Separating the sum into parts with $(m, n)\text{ mod }2 = (0, 1), \;(1, 0), \;\text{and } (1, 1)$ and simplifying the infinite $q$-Pochhammer symbols using modularity $\eta(-\tau^{-1}) = \sqrt{-i \tau}\, \eta(\tau)$ of the $\eta$-function $\eta(\tau) = q^{\frac{1}{24}} (q;q)_\infty$ and similar properties of the $\eta$-quotients
\[
\frac{
(q;q)_\infty
}{
(\tilde{q}; \tilde{q})_\infty
}
= 
\frac{q^{-\frac{1}{24}}\eta(\tau)}{\tilde{q}^{-\frac{1}{24}}\eta(-\frac{1}{\tau})}
= 
e^{\frac{\pi i}{4}}
q^{-\frac{1}{24}} \tilde{q}^{\frac{1}{24}} \tau^{-\frac12},
\]
\[
\frac{(\tilde{q}^{\frac12}; \tilde{q})_\infty}{(-q;q)_\infty} 
= \frac{\tilde{q}^{\frac{1}{48}} \eta(-\frac{1}{2\tau})/\eta(-\frac{1}{\tau})}{q^{-\frac{1}{24}} \eta(2\tau) / \eta(\tau)}
= \sqrt{2}q^{\frac{1}{24}} \tilde{q}^{\frac{1}{48}},
\]
\[
\frac{(-\tilde{q}^{\frac12};\tilde{q})_\infty}{(-q^{\frac12};q)_\infty}
= \frac{\tilde{q}^{\frac{1}{48}} \frac{\eta(-\frac{1}{\tau})^2}{\eta(-\frac{1}{2\tau}) \eta(-\frac{2}{\tau})}}{q^{\frac{1}{48}} \frac{\eta(\tau)^2}{\eta(\frac{\tau}{2}) \eta(2\tau)}}
= q^{-\frac{1}{48}} \tilde{q}^{\frac{1}{48}},
\]
we conclude that
\begin{align*}
S_{\lambda,\mu}(\tau) &= 
-\frac{\tau^{\frac12}}{4\pi}
\bigg(
(-1)^{\frac{1}{4}}\sqrt{2}\tau^{-\frac12}\; \tilde{q}^{\frac{1}{16}} B_{\mu}(\tilde{q}^{-1}) \; A_{\lambda}(q)\\
&\qquad+ (-1)^{\frac{1}{4}}\sqrt{2}\tau^{-\frac12}\; A_{\mu}(\tilde{q}^{-1}) \; q^{-\frac{1}{16}}B_{\lambda}(q)\\
&\qquad+ \tau^{-\frac12}\; \tilde{q}^{\frac{1}{16}} C_{\mu}(\tilde{q}^{-1}) \; q^{-\frac{1}{16}}C_{\lambda}(q)
\bigg),
\end{align*}
where
\begin{align*}
A_{\lambda}(q) &:= 
\sum_{m \geq 0} 
q^{(m + \frac{1}{2})^2 + (m + \frac{1}{2})\lambda} 
\frac{
(-q;q)_{m}
}{(q;q)_{2m}
},
\\
B_{\lambda}(q) &:= 
\sum_{m \geq 0}
(-1)^{\lambda}
q^{(m+1)^2 + (m+1)\lambda} 
\frac{
(q^{\frac12}; q)_{m+1}
}{
(q;q)_{2m+1}
},
\\
C_{\lambda}(q) &:= 
\sum_{m \geq 0} 
q^{(m+1)^2 + (m+1)\lambda} 
\frac{
(-q^{\frac12}; q)_{m+1}
}{
(q;q)_{2m+1}
}.
\end{align*}
Note, we haven't gotten the $q$-series we wanted yet, since they should come from the poles of $\Phi_{\mathsf{b}}(2x-c_{\mathsf{b}})$ at $x_{\frac{m}{2},\frac{n}{2}}$ when both $m$ and $n$ are even, except that those poles are canceled by those of $\Phi_{\mathsf{b}}(x)$. 
Hence, to get the desired $q$-series, we need to add extra poles by inserting the factor $\frac{1}{1 - e^{2\pi \mathsf{b}^{-1}(x-c_{\mathsf{b}})}}$; this gives the ``inhomogeneous'' state integral:\footnote{In the notation, $m$ for ``modified.''}
\[
mS_{\lambda,\mu}(\tau) := 
\int_{\mathbb{R}+i\epsilon} 
\frac{1}{1 - e^{2\pi \mathsf{b}^{-1}(x-c_{\mathsf{b}})}} 
\frac{\Phi_{\mathsf{b}}(2x - c_{\mathsf{b}})}{\Phi_{\mathsf{b}}(x)} 
e^{2\pi i (-x^2 - i (\lambda \mathsf{b} + \mu \mathsf{b}^{-1})x)} dx.
\]
That is, we are adding new simple poles at $x_{m,0}$, $m \in \mathbb{Z}$. 
Following the same procedure as before, we have 
\begin{align*}
mS_{\lambda,\mu}(\tau) &= 
-\frac{\tau^{\frac12}}{4\pi}
\bigg(
(-1)^{\frac{1}{4}}\sqrt{2}\tau^{-\frac12}\; \tilde{q}^{\frac{1}{16}} mB_{\mu}(\tilde{q}^{-1}) \; A_{\lambda}(q)\\
&\qquad+ (-1)^{\frac{1}{4}}\sqrt{2}\tau^{-\frac12}\; mA_{\mu}(\tilde{q}^{-1}) \; q^{-\frac{1}{16}}B_{\lambda}(q)\\
&\qquad+ \tau^{-\frac12}\; \tilde{q}^{\frac{1}{16}} mC_{\mu}(\tilde{q}^{-1}) \; q^{-\frac{1}{16}}C_{\lambda}(q) \\
&\qquad+ (-1)^{\mu+1}\tilde{q}^{-\frac{1}{4} - \frac{\mu}{2}}\;
Z_{\lambda}(q)
\bigg),
\end{align*}
where
\begin{align*}
Z_\lambda(q) &:= \sum_{m\geq 0} (-1)^{\lambda}q^{(m+\frac12)^2 + (m+\frac12)\lambda}\frac{(q;q)_m}{(q;q)_{2m}}, \\
mA_{\lambda}(q) &:= 
\sum_{m \geq 0} 
q^{(m + \frac{1}{2})^2 + (m + \frac{1}{2})\lambda} 
\frac{1}{1+q^m}
\frac{
(-q;q)_{m}
}{(q;q)_{2m}
},
\\
mB_{\lambda}(q) &:= 
\sum_{m \geq 0}
(-1)^{\lambda}
q^{(m+1)^2 + (m+1)\lambda} 
\frac{1}{1-q^{\frac{1}{2}+m}}
\frac{
(q^{\frac12}; q)_{m+1}
}{
(q;q)_{2m+1}
},
\\
mC_{\lambda}(q) &:= 
\sum_{m \geq 0} 
q^{(m+1)^2 + (m+1)\lambda} 
\frac{1}{1+q^{\frac{1}{2}+m}}
\frac{
(-q^{\frac12}; q)_{m+1}
}{
(q;q)_{2m+1}
}.
\end{align*}
Note, $Z_{\lambda}(q)$ is the desired $q$-series $\mathcal{F}_0^{-(\lambda+1)}(q)$, up to sign and an overall $q$-power: 
\begin{equation}\label{eq:Z-to-F}
Z_{\lambda}(q) = q^{\frac14}(-q^{\frac12})^{\lambda} \mathcal{F}_0^{-(\lambda+1)}(q).
\end{equation}
Our discussion so far can be summarized in the following proposition:
\begin{prop}[Factorization of state integrals]\label{prop:factorization-state-integral}
Define
\[
Q_{\lambda}(q)
:= 
\qty(
\begin{array}{c|ccc}
1 & 0 & 0 & 0 \\
\hline
0 & 1 & 0 & 0 \\
0 & 0 & q^{-\frac{1}{16}} & 0 \\
0 & 0 & 0 & q^{-\frac{1}{16}}
\end{array}
)
\qty(
\begin{array}{c|ccc}
1 & Z_{\lambda}(q) & Z_{\lambda+1}(q) & Z_{\lambda+2}(q)\\
\hline
0 & A_{\lambda}(q) & A_{\lambda+1}(q) & A_{\lambda+2}(q) \\
0 & B_{\lambda}(q) & B_{\lambda+1}(q) & B_{\lambda+2}(q) \\
0 & C_{\lambda}(q) & C_{\lambda+1}(q) & C_{\lambda+2}(q) 
\end{array}
),
\]
and
\[
Q^\vee_{\lambda}(q) := 
\qty(
\begin{array}{c|ccc}
1 & 0 & 0 & 0 \\
\hline
0 & 1 & 0 & 0 \\
0 & 0 & q^{-\frac{1}{16}} & 0 \\
0 & 0 & 0 & q^{-\frac{1}{16}}
\end{array}
)
\qty(
\begin{array}{c|ccc}
-q^{\frac14}(-q^{\frac12})^{\lambda} & 0 & 0 & 0\\
\hline
mA_{\lambda}(q) & A_{\lambda}(q) & A_{\lambda+1}(q) & A_{\lambda+2}(q) \\
mB_{\lambda}(q) & B_{\lambda}(q) & B_{\lambda+1}(q) & B_{\lambda+2}(q) \\
mC_{\lambda}(q) & C_{\lambda}(q) & C_{\lambda+1}(q) & C_{\lambda+2}(q) 
\end{array}
).
\]
Then, we have
\begin{align*}
&Q^\vee_{\mu}(\tilde{q}^{-1})^T
\cdot
\qty(
\begin{array}{c|ccc}
1 & 0 & 0 & 0 \\
\hline
0 & 0 & (-1)^{\frac14}\sqrt{2}\tau^{-\frac{1}{2}}  & 0 \\
0 & (-1)^{\frac14}\sqrt{2}\tau^{-\frac{1}{2}}  & 0 & 0 \\
0 & 0 & 0 & (-1)^{\frac14}\tau^{-\frac{1}{2}} 
\end{array}
)
\cdot
Q_{\lambda}(q)
\\
&= 
-4\pi \tau^{-\frac{1}{2}}
\qty(
\begin{array}{c|ccc}
-\tilde{q}^{-\frac14}(-\tilde{q}^{-\frac12})^{\mu} & mS_{\lambda,\mu}(\tau) & mS_{\lambda+1,\mu}(\tau) & mS_{\lambda+2,\mu}(\tau) \\
\hline
0 & S_{\lambda,\mu}(\tau) & S_{\lambda+1,\mu}(\tau) & S_{\lambda+2,\mu}(\tau) \\
0 & S_{\lambda,\mu+1}(\tau) & S_{\lambda+1,\mu+1}(\tau) & S_{\lambda+2,\mu+1}(\tau) \\
0 & S_{\lambda,\mu+2}(\tau) & S_{\lambda+1,\mu+2}(\tau) & S_{\lambda+2,\mu+2}(\tau)
\end{array}
)
.
\end{align*}
In particular, the above product extends holomorphically from $\mathbb{C}\setminus \mathbb{R}$ to the cut plane $\mathbb{C}_S = \mathbb{C} \setminus \mathbb{R}_{\leq 0}$. 
\end{prop}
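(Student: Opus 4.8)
The plan is to read off Proposition~\ref{prop:factorization-state-integral} directly from the residue computation carried out above, and then to extract the holomorphic continuation from the analytic properties of the non-compact quantum dilogarithm. Concretely there are three tasks: (i) recognize the explicit formula derived for $mS_{\lambda,\mu}(\tau)$ (and, by deleting the first row and column, for the sub-block $S_{\lambda,\mu}(\tau)$) as the matrix of entries of the product $Q^\vee_\mu(\tilde q^{-1})^T \cdot (\text{middle matrix}) \cdot Q_\lambda(q)$; (ii) track all scalar prefactors -- the powers $\tau^{\pm 1/2}$, the roots $(-1)^{1/4}=e^{\pi i/4}$, the $\sqrt 2$, and the fractional powers $q^{\mp 1/16}$, $\tilde q^{1/16}$ -- through the $\eta$-quotient simplifications, and check that they reassemble exactly into the two diagonal factors $\operatorname{diag}(1,1,q^{-1/16},q^{-1/16})$ and $\operatorname{diag}(1,1,\tilde q^{1/16}\!\to\!\cdots)$ pulled out of $Q_\lambda$ and $Q^\vee_\mu$, the $(2,3)$, $(3,2)$, $(4,4)$ entries of the middle matrix, and the overall constant $-4\pi\tau^{-1/2}$; (iii) deduce holomorphy on $\mathbb{C}_S$.

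For (i) and (ii) I would align the parity decomposition of the double sum with the block structure of the three matrices. The lower-right $3\times 3$ block of both sides is $\big(S_{\lambda+i,\mu+j}(\tau)\big)_{0\le i,j\le 2}$; on the product side this is the ``homogeneous'' factorization, so it amounts to matching, for each class $(m,n)\bmod 2\in\{(0,1),(1,0),(1,1)\}$, the corresponding summand in the displayed formula for $S_{\lambda,\mu}(\tau)$ against one of $B_\mu(\tilde q^{-1})A_\lambda(q)$, $A_\mu(\tilde q^{-1})B_\lambda(q)$, $C_\mu(\tilde q^{-1})C_\lambda(q)$ times prefactors -- the $\sqrt 2$ arising in the first two sectors landing on the $(2,3)$ and $(3,2)$ entries, the $C$-sector factor on the $(4,4)$ entry, and the residual $q^{\mp1/16}$, $\tilde q^{1/16}$ absorbed into the diagonal prefactor matrices. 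The first column of the product encodes the new residues at $x_{m,0}$ introduced by the factor $\tfrac{1}{1-e^{2\pi\mathsf b^{-1}(x-c_{\mathsf b})}}$: the $(1,1)$-entry reproduces the $Z_\lambda(q)$-term, which by \eqref{eq:Z-to-F} is $\mathcal F_0^{-(\lambda+1)}(q)$ up to the recorded sign and $q$-power, while the remaining first-column entries produce $mA_\mu,mB_\mu,mC_\mu$ at $\tilde q^{-1}$ and the first row carries $mS_{\lambda,\mu}(\tau)$. The bookkeeping here -- applying $\eta(-1/\tau)=\sqrt{-i\tau}\,\eta(\tau)$ to the ratio $(q;q)_\infty/(\tilde q;\tilde q)_\infty$ and the two analogous $\eta$-quotient identities already listed, and keeping the branch of $\tau^{1/2}$ consistent across $\mathbb{C}\setminus\mathbb{R}$ -- is mechanical but the most error-prone part of the proof, and is essentially the content hidden behind the ``straightforward calculation'' preceding the Proposition.

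For (iii), the matrix product on the left-hand side is a priori defined only on $\mathbb{C}\setminus\mathbb{R}$ (where the $q$- and $\tilde q^{-1}$-series are meaningful), but the right-hand side, a matrix of state integrals, extends holomorphically to the cut plane: $\Phi_{\mathsf b}$ is holomorphic jointly in $x$ and $\tau=\mathsf b^2$ for $\tau\in\mathbb{C}_S=\mathbb{C}\setminus\mathbb{R}_{\le0}$ (the integral representation is valid in a neighborhood of the positive real $\mathsf b^2$-axis), so the integrand of $mS_{\lambda,\mu}$ is holomorphic in $\tau$ on $\mathbb{C}_S$, and the Gaussian decay noted above (rate $1>0$ for $\arg x<\tfrac\pi2$, $2>0$ for $\arg x>\tfrac\pi2$) gives local uniform convergence of the integral along $\mathbb{R}+i\epsilon$, suitably deformed to avoid the finitely many poles that could approach the real axis. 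Hence $\tau\mapsto S_{\lambda,\mu}(\tau)$ and $\tau\mapsto mS_{\lambda,\mu}(\tau)$ are holomorphic on $\mathbb{C}_S$, and by the identity from (i)--(ii) the matrix product $Q^\vee_\mu(\tilde q^{-1})^T\cdot(\text{middle matrix})\cdot Q_\lambda(q)$ agrees with this holomorphic function on $\mathbb{C}\setminus\mathbb{R}$, yielding the claimed extension. The main obstacle, besides the prefactor/branch accounting in (ii), is making this contour deformation fully rigorous near $\mathbb{R}_{>0}$ -- verifying that the poles of $\Phi_{\mathsf b}(2x-c_{\mathsf b})$ and of $1/(1-e^{2\pi\mathsf b^{-1}(x-c_{\mathsf b})})$ can be routed around without crossing the contour and without collecting extra residues, so that the integral genuinely defines a single-valued holomorphic function on all of $\mathbb{C}_S$.
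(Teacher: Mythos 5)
Your proposal follows the paper's own route: the Proposition is stated there precisely as a summary of the preceding residue computation (pushing the contour to $i\infty$, separating the poles by parity class $(m,n)\bmod 2$ plus the extra poles at $x_{m,0}$, and absorbing the $\eta$-quotient prefactors into the diagonal matrices), and the holomorphic extension to $\mathbb{C}_S$ is read off from the state-integral representation exactly as you describe. The bookkeeping and contour-deformation points you flag as the delicate steps are indeed the only substantive content, and your treatment of them is consistent with the paper's.
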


\begin{rmk}
While the matrices $Q_{\lambda}(q)$ and $Q^\vee_{\lambda}(q)$ depend on $\lambda \in \mathbb{Z}$, their column spaces are independent of the exact value of $\lambda$, since they satisfy the following recurrence relations:
\begin{align*}
Q_{\lambda+1}(q) 
&= 
Q_{\lambda}(q) \cdot
\qty(
\begin{array}{c|ccc}
1 & 0 & 0 & 2(1-q)q^{\frac14}(-q^{\frac12})^{\lambda+3} \\
\hline
0 & 0 & 0 & -q^{\frac52} \\
0 & 1 & 0 & q^2 \\
0 & 0 & 1 & q^{\frac12} (1 + q^{\lambda+3})
\end{array}
), \\
Q^\vee_{\lambda+1}(q)
&=
Q^\vee_{\lambda}(q)
\cdot
\qty(
\begin{array}{c|ccc}
-q^{\frac12} & 0 & 0 & 0 \\
\hline
q^{\frac12} & 0 & 0 & -q^{\frac{5}{2}} \\
0 & 1 & 0 & q^2 \\
0 & 0 & 1 & q^{\frac12}(1+q^{\lambda+3})
\end{array}
)
.
\end{align*}
Note, the column span of the top row of $Q_\lambda(q)$ is exactly the $q$-series module $V_q(Y)$ we are interested in. 
Also, the last column of the $q$-difference equation for $Q_\lambda(q)$ above is exactly the inhomogeneous $q$-difference equation \eqref{eq:F-inhom-qdiff} for $\mathcal{F}_0^{\lambda}(q)$, once we replace all $Z$'s by $\mathcal{F}$'s using \eqref{eq:Z-to-F}. 
\end{rmk}

\subsection{Holomorphic quantum modularity}

\begin{prop}[Semilinear pairing]\label{prop:semilinear-pairing} 
We have the following semilinear pairing\footnote{Such a pairing is often called a ``quadratic relation'' in the literature.}
\begin{align*}
&Q^\vee_{-2-\lambda}(q^{-1})^T
\cdot
\qty(
\begin{array}{c|ccc}
-1 & 0 & 0 & 0 \\
\hline
0 & 2 & 0 & 0 \\
0 & 0 & 1 & 0 \\
0 & 0 & 0 & 1 
\end{array}
)
\cdot
Q_{-2+\lambda}(q)
\\
&=
\qty(
\begin{array}{c|ccc}
-q^{\frac14}(-q^{\frac12})^{\lambda+1} & 2(q^{\lambda} + q^{2\lambda}) & 0 & 2q^{1+\lambda} \\
\hline
0 & 2(q^{\lambda} + q^{2\lambda}) & 2q^{\frac12 + \lambda} & 2q^{1+\lambda} \\
0 & 2q^{-\frac12+\lambda} & 2q^{\lambda} & 2q^{\frac12+\lambda} \\
0 & 2q^{-1+\lambda} & 2q^{-\frac12+\lambda} & 2(q^{\lambda}+q^{2\lambda})
\end{array}
)
=: P_{\lambda}(q)
.
\end{align*}
\end{prop}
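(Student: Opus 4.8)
The plan is to deduce this semilinear pairing from the factorization of the state integrals in Proposition \ref{prop:factorization-state-integral}, by exploiting the fact that the matrices $Q_\lambda$ and $Q_\lambda^\vee$ appear on both sides of the factorization and that the state integrals $S_{\lambda,\mu}(\tau)$ and $mS_{\lambda,\mu}(\tau)$ have a concrete reflection symmetry under $\tau \mapsto -1/\tau$. The starting observation is that, by definition of the state integrals as contour integrals against $\Phi_{\mathsf b}(2x-c_{\mathsf b})/\Phi_{\mathsf b}(x)$ times a Gaussian, the modular-inversion behavior of $\Phi_{\mathsf b}$ (i.e. $\mathsf b \mapsto \mathsf b^{-1}$, equivalently $\tau \mapsto -1/\tau$, $q \mapsto \tilde q$) exchanges the roles of $q$ and $\tilde q$ and of the parameters $\lambda$ and $\mu$, so that the matrix of state integrals on the right-hand side of Proposition \ref{prop:factorization-state-integral} satisfies a manifest symmetry under $(\tau,\lambda,\mu) \mapsto (-1/\tau,\mu,\lambda)$ (with a shift of arguments and a transpose). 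Composing the two factorizations obtained before and after this inversion, the state-integral matrix cancels out, and one is left with an identity purely among the $Q$'s and $Q^\vee$'s — evaluated at $q$ and at $q^{-1}$ (or $\tilde q$ and $\tilde q^{-1}$) — with explicit scalar prefactors. That identity is precisely the claimed semilinear pairing once the constant middle matrix is computed.

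The key steps, in order, would be: (1) Write down the factorization of Proposition \ref{prop:factorization-state-integral} with the roles of $q$ and $\tilde q$ swapped — i.e. apply the same residue computation to $mS_{\mu,\lambda}(-1/\tau)$ — and record the resulting identity; here one uses that $\tilde q = e^{2\pi i(-1/\tau)}$ is the modular partner of $q$, and that the $\eta$-quotient identities used in deriving the factorization are symmetric under $\tau \mapsto -1/\tau$. (2) Identify $mS_{\lambda,\mu}(\tau)$ with (a prefactor times) $\overline{mS_{\mu,\lambda}(-1/\bar\tau)}$ or, more usefully in the algebraic setting, observe that the bilinear expression $Q^\vee_\mu(\tilde q^{-1})^T M(\tau) Q_\lambda(q)$ appearing in Proposition \ref{prop:factorization-state-integral} is, up to the explicit diagonal/anti-diagonal prefactor matrix, symmetric under interchanging the two factorizations; this forces a relation of the shape $Q^\vee_\mu(\tilde q^{-1})^T N Q_\lambda(q) = (\text{explicit } q,\tilde q \text{ matrix})$ with $N$ a $\tau$-independent matrix. (3) Specialize $\tilde q \mapsto q^{-1}$ formally (equivalently, track only the $q$-dependence and drop the analytic role of $\tau$), and match indices: replace $\mu$ by $-2-\lambda$ and $\lambda$ by $-2+\lambda$ so that the argument shifts $\lambda, \lambda+1, \lambda+2$ in $Q_{-2+\lambda}(q)$ align with $-2-\mu, -1-\mu, -\mu$ in $Q^\vee_{-2-\lambda}(q^{-1})$, i.e. with $-2-\lambda$ shifted the same way. (4) Compute the resulting $4\times 4$ matrix $P_\lambda(q)$ explicitly: the $(1,1)$-entry comes from pairing the ``$mS$'' column of $Q^\vee$ against the ``$Z$'' row of $Q$ and collecting residues, giving $-q^{1/4}(-q^{1/2})^{\lambda+1}$; the remaining entries come from the Nahm-type sums $A,B,C$ paired against $mA,mB,mC$, and evaluate to the stated combinations of $q^\lambda, q^{2\lambda}, q^{\pm 1/2+\lambda}, q^{1+\lambda}$. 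Step (4) can be reduced to a finite collection of $q$-hypergeometric summation identities — e.g. Bailey-type quadratic transformations for $\sum q^{(m+1/2)^2+\cdots}(-q;q)_m/(q;q)_{2m}$ — which should either be classical or reduce to the $q$-difference equations already recorded in the paper (the recurrences for $Q_\lambda$ and $Q_\lambda^\vee$, and \eqref{eq:F-inhom-qdiff}).

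An alternative, more self-contained route avoiding state integrals: show directly that both sides of the claimed identity satisfy the same inhomogeneous $q$-difference equation in $\lambda$ (in each of its matrix entries), using the recurrences for $Q_\lambda(q)$ and $Q^\vee_\lambda(q)$ displayed in the remark after Proposition \ref{prop:factorization-state-integral} together with their analogues at $q^{-1}$, and then check agreement for three consecutive values of $\lambda$ (say $\lambda = 0,1,2$) by a direct $q$-series computation. Because the recurrence is order three (and the inhomogeneous term is explicit), agreement of a finite number of initial terms pins down the identity. I would likely present the state-integral derivation as the conceptual proof and fall back on this second method as the verification of the explicit constants.

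The main obstacle I expect is Step (4) — pinning down the precise scalar entries of $P_\lambda(q)$. The structure of the identity (that \emph{some} semilinear pairing exists with $\tau$-independent middle matrix) follows rather formally from the symmetry of the state integral under $\tau\mapsto-1/\tau$ and the factorization; but extracting the exact matrix $\mathrm{diag}(-1,2,1,1)$ in the middle and the exact right-hand side requires either (a) carefully normalizing the residue computations and the $\eta$-quotient constants from Proposition \ref{prop:factorization-state-integral} — keeping track of the $\sqrt 2$'s, the $(-1)^{1/4}$'s, and the $\tilde q^{1/16} q^{-1/16}$ type prefactors so that they cancel in the composed identity — or (b) proving the handful of underlying $q$-hypergeometric ``quadratic relations'' among $A_\lambda,B_\lambda,C_\lambda$ and $mA_\lambda,mB_\lambda,mC_\lambda$ from scratch. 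Either way this is a bounded but genuinely fiddly calculation; I would organize it by first establishing the pairing up to an unknown constant diagonal matrix $D$ and an unknown right-hand side $P_\lambda(q)$, then fixing both by evaluating at $\lambda = 0$ and reading off a few low-order $q$-coefficients, and finally checking consistency with the $\lambda \mapsto \lambda+1$ recurrences to confirm the $\lambda$-dependence of $P_\lambda(q)$ is the asserted one.
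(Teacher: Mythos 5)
First, a point of reference: the paper itself states Proposition \ref{prop:semilinear-pairing} with no proof at all --- there is no argument supplied between the proposition and the theorem that uses it --- so there is no ``paper's proof'' to compare your attempt against. Your proposal therefore has to stand on its own, and as written it has two gaps.

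The primary route does not work as described. The factorization in Proposition \ref{prop:factorization-state-integral} pairs $Q_{\lambda}(q)$ with $Q^\vee_{\mu}(\tilde q^{-1})$, where $\tilde q = e^{-2\pi i/\tau}$ is the \emph{modular} partner of $q$; the semilinear pairing instead pairs $Q_{-2+\lambda}(q)$ with $Q^\vee_{-2-\lambda}(q^{-1})$. Your Step (3), ``specialize $\tilde q \mapsto q^{-1}$ formally,'' is not a legitimate operation: the factorization is an identity of analytic functions of $\tau \in \mathbb{C}\setminus\mathbb{R}$, and $\tilde q = q^{-1}$ would force $-1/\tau = -\tau$, i.e.\ $\tau = \pm 1$ on the real line where nothing converges; there is also no formal-variable sense in which $\tilde q$ and $q^{-1}$ can be identified, since the factorization is bilinear in two genuinely independent series variables. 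Note also that the proof of the modularity theorem uses the semilinear pairing and the state-integral factorization as two \emph{independent} inputs (it inverts $Q_\lambda(\tilde q)$ via the pairing and then substitutes the factorization), so if the pairing followed from the factorization by such a substitution the whole argument would collapse into a tautology. Relatedly, your Step (2) --- that symmetry of the state-integral matrix under $\tau\mapsto -1/\tau$ ``forces'' a relation with a $\tau$-independent middle matrix --- is asserted but not substantiated; the $S_{\lambda,\mu}(\tau)$ are transcendental and do not cancel out of the composed identity in the way you suggest.

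Your fallback route is the right skeleton (it is the standard way such quadratic relations are handled), but it is missing its load-bearing step. Reducing to a single $\lambda$ via the displayed recurrences for $Q_\lambda$ and $Q^\vee_\lambda$ (and their $q\mapsto q^{-1}$ counterparts) is fine. However, for fixed $\lambda$ each entry of $Q^\vee_{-2-\lambda}(q^{-1})^T D\, Q_{-2+\lambda}(q)$ is a well-defined Laurent series in $q^{1/4}$ with finitely many negative powers (the quadratic exponents dominate), and the actual content of the proposition is that all sufficiently high coefficients vanish. ``Reading off a few low-order $q$-coefficients'' proves nothing unless you first establish an a priori bound forcing the left-hand side to be a Laurent polynomial of controlled degree --- e.g.\ by also controlling the expansion at $q = \infty$, or by proving the underlying entry-wise $q$-hypergeometric identities directly. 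You flag this as ``fiddly,'' but it is not merely fiddly: without the polynomiality argument the verification is only numerical, and that is precisely the part of the proposition that requires proof.
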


\begin{thm}
Fix any $\lambda \in \mathbb{Z}$. 
Then, $Q(q) = Q_{\lambda}(q)$ is a matrix-valued holomorphic quantum modular form, with the automorphy factor (i.e., a matrix-valued $\mathrm{SL}_2(\mathbb{Z})$-cocycle) $\gamma \mapsto j_{\gamma}(\tau)$ generated by
\begin{align*}
j_S(\tau)
&:=
\qty(
\begin{array}{c|ccc}
1 & 0 & 0 & 0 \\
\hline
0 & \tau^{-\frac{1}{2}} & 0 & 0 \\
0 & 0 & \tau^{-\frac{1}{2}} & 0 \\
0 & 0 & 0 & \tau^{-\frac{1}{2}}
\end{array}
)
\qty(
\begin{array}{c|ccc}
-1 & 0 & 0 & 0 \\
\hline
0 & 0 & \frac{(-1)^{\frac14}}{\sqrt{2}} & 0 \\
0 & (-1)^{\frac14}\sqrt{2} & 0 & 0 \\
0 & 0 & 0 & (-1)^{\frac14}
\end{array}
), \\
j_T(\tau)
&:=
\qty(
\begin{array}{c|ccc}
-1 & 0 & 0 & 0 \\
\hline
0 & -1 & 0 & 0 \\
0 & 0 & 0 & (-1)^{\frac38} \\
0 & 0 & (-1)^{\frac38} & 0
\end{array}
).
\end{align*}
In other words, the cocycle
\[
\Omega_{\gamma}(\tau) := Q(q \vert_{\gamma})^{-1}\cdot j_{\gamma}(\tau) \cdot Q(q),
\]
extends holomorphically from $\mathbb{C}\setminus \mathbb{R}$ to the cut plane $\mathbb{C}_{\gamma}$. 
\end{thm}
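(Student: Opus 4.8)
The plan is to verify the claimed modularity on the two generators $S=\left(\begin{smallmatrix}0&-1\\1&0\end{smallmatrix}\right)$ and $T=\left(\begin{smallmatrix}1&1\\0&1\end{smallmatrix}\right)$ of $\mathrm{SL}_2(\mathbb{Z})$ and then propagate to all $\gamma$. First I would check that $\gamma\mapsto j_\gamma(\tau)$ is a well-defined matrix-valued $\mathrm{SL}_2(\mathbb{Z})$-cocycle, i.e.\ that $j_S(\tau),j_T(\tau)$ satisfy the relations $S^4=1$ and $(ST)^3=S^2$; this is a short $4\times 4$ computation (the $\tau^{-1/2}$ factors assemble into the usual scalar automorphy factor on the $3\times 3$ block and into $1$ on the remaining coordinate). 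Granting this, it suffices to show $\Omega_S$ extends holomorphically to $\mathbb{C}_S=\mathbb{C}\setminus\mathbb{R}_{\le 0}$ and $\Omega_T$ to $\mathbb{C}_T=\mathbb{C}$ (since $T$ has $c=0$): the identity $\Omega_{\gamma_1\gamma_2}(\tau)=\Omega_{\gamma_1}(\gamma_2\tau)\,\Omega_{\gamma_2}(\tau)$, which follows formally from the definition of $\Omega_\gamma$ and the cocycle property of $j$, together with the inclusion $\mathbb{C}_{\gamma_1\gamma_2}\subseteq \gamma_2^{-1}\mathbb{C}_{\gamma_1}\cap\mathbb{C}_{\gamma_2}$ then gives holomorphy of $\Omega_\gamma$ on $\mathbb{C}_\gamma$ for every $\gamma$ by the standard induction on word length in $S,T$ used in the theory of holomorphic quantum modular forms. (On the lower half-plane, where the $q$-series defining $Q_\lambda(q)$ diverge, $Q$ is understood through its companion matrix $Q^\vee(q^{-1})$, glued to the upper half-plane by the semilinear pairing of Proposition~\ref{prop:semilinear-pairing}; the computation below applies to both halves.)

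For the $S$-cocycle the idea is to feed Proposition~\ref{prop:semilinear-pairing} into Proposition~\ref{prop:factorization-state-integral} so as to eliminate $Q^\vee$. Fix $\lambda$, write $N:=\mathrm{diag}(-1,2,1,1)$, and apply Proposition~\ref{prop:factorization-state-integral} with the second index set to $-\lambda-4$; this expresses $Q^\vee_{-\lambda-4}(\tilde q^{-1})^T$ in terms of $Q_\lambda(q)^{-1}$, the middle matrix $\Lambda_S(\tau)$ of that proposition, and the state-integral matrix $\mathcal S_{\lambda,-\lambda-4}(\tau)$ (whose entries are the $S_{\bullet,\bullet}(\tau)$ and $mS_{\bullet,\bullet}(\tau)$). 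On the other hand, evaluating Proposition~\ref{prop:semilinear-pairing} at $\tilde q$ with its free index set to $\lambda+2$ gives $Q^\vee_{-\lambda-4}(\tilde q^{-1})^T=P_{\lambda+2}(\tilde q)\,Q_\lambda(\tilde q)^{-1}N^{-1}$. Equating the two expressions, solving for $Q_\lambda(\tilde q)^{-1}$, and substituting into $\Omega_S(\tau)=Q_\lambda(q\vert_S)^{-1}j_S(\tau)Q_\lambda(q)=Q_\lambda(\tilde q)^{-1}j_S(\tau)Q_\lambda(q)$, the expression collapses: writing $\Lambda_S(\tau)=D(\tau)L$ and $j_S(\tau)=D(\tau)J$ with $D(\tau)=\mathrm{diag}(1,\tau^{-1/2},\tau^{-1/2},\tau^{-1/2})$ and $L,J$ constant, one has $\Lambda_S(\tau)^{-1}N\,j_S(\tau)=L^{-1}D(\tau)^{-1}ND(\tau)J=L^{-1}NJ$ since $N$ is diagonal, and a direct check gives $NJ=L$, so this factor is the identity and the conjugating copies of $Q_\lambda(q)$ cancel. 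The outcome is
\[
\Omega_S(\tau)\;=\;-4\pi\,P_{\lambda+2}(\tilde q)^{-1}\bigl(\tau^{-1/2}\,\mathcal S_{\lambda,-\lambda-4}(\tau)\bigr).
\]
The right-hand side is manifestly holomorphic on $\mathbb{C}_S$: the factor $\tau^{-1/2}\mathcal S_{\lambda,-\lambda-4}(\tau)$ is, up to the scalar $-4\pi$, exactly the product appearing on the left of Proposition~\ref{prop:factorization-state-integral}, hence holomorphic on $\mathbb{C}_S$; and $P_{\lambda+2}(\tilde q)^{-1}$ is holomorphic there because $\det P_\mu(q)$ is, up to sign, a monomial in $q^{1/4}$ (expanding along the first column and doing the remaining $3\times 3$ determinant, the half-integer powers cancel and it reduces to $q^{2\mu}$ up to a monomial), so $P_\mu(q)^{-1}$ has Laurent-polynomial entries while $\tilde q=e^{-2\pi i/\tau}$ is holomorphic and nonvanishing on $\mathbb{C}_S$. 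This identity is first derived on the upper half-plane, where all $q$- and $\tilde q$-series converge, and then extends to $\mathbb{C}_S$ by analytic continuation.

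For the $T$-cocycle, which is elementary, I would compute the effect of $q\mapsto e^{2\pi i}q$ on the building blocks $A_\lambda,B_\lambda,C_\lambda,Z_\lambda$: the Gaussian exponents $(m+\tfrac12)^2$ and $(m+1)^2$ contribute the eighth root of unity $i$ and $1$ respectively, the half-integer Pochhammer symbols obey $(q^{1/2};q)_k\mapsto(-q^{1/2};q)_k$, which interchanges $B_\lambda\leftrightarrow C_\lambda$ up to a sign, and the $q^{-1/16}$ prefactors in rows $3,4$ pick up $(-1)^{-1/8}$. Collecting these phases one reads off $Q_\lambda(q\vert_T)=j_T(\tau)\,Q_\lambda(q)\,D_T$ for an explicit constant diagonal matrix $D_T$ of column phases, whence $\Omega_T(\tau)=D_T^{-1}$ is constant and in particular holomorphic on $\mathbb{C}_T=\mathbb{C}$; the recurrence relations recorded in the remark following Proposition~\ref{prop:factorization-state-integral} confirm that this is consistent as $\lambda$ varies.

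The main obstacle is the pair of computational ``miracles'' on which the specific $j_S$ and $j_T$ rest, and which carry essentially all the content once Propositions~\ref{prop:factorization-state-integral} and \ref{prop:semilinear-pairing} are available: that $N$ times the constant part of $j_S$ equals the constant part of the middle matrix of Proposition~\ref{prop:factorization-state-integral} (so that the conjugation by $Q_\lambda(q)$ trivializes), and that $\det P_\mu(q)$ is a monomial (so that $P_{\lambda+2}(\tilde q)^{-1}$ introduces no poles on $\mathbb{C}_S$). Verifying these, keeping the fractional $q$-powers and the shift index $\lambda$ under control, and making the analytic-continuation and companion-matrix bookkeeping precise, is where the work lies; everything else is formal cocycle manipulation.
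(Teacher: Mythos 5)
Your proposal is correct and follows essentially the same route as the paper: reduce to the generators $S$ and $T$, observe that $\Omega_T$ is a constant matrix, and for $S$ combine the semilinear pairing with the state-integral factorization so that $N\,j_S(\tau)$ reproduces the middle matrix of Proposition \ref{prop:factorization-state-integral} and $\Omega_S(\tau)$ becomes $P_{\lambda+2}(\tilde q)^{-1}$ times the manifestly holomorphic state-integral matrix. The extra details you supply (the cocycle relations for $j$, the monomiality of $\det P_\mu$, the explicit column phases under $T$) are correct and merely make explicit what the paper leaves implicit.
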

\begin{proof}
It suffices to check this for the generators $S, T \in \mathrm{SL}_2(\mathbb{Z})$. 
For $T$, this is obvious because $\Omega_T(\tau) = (-1)^{\lambda +\frac12}\mathrm{Id}$. 
For $S$, the semilinear pairing (Proposition \ref{prop:semilinear-pairing}) and the factorization of state integrals (Proposition \ref{prop:factorization-state-integral}) imply that the cocycle
\begin{align*}
\Omega_S(\tau) &= Q_{\lambda}(\tilde{q})^{-1}\cdot j_S(\tau) \cdot Q_{\lambda}(q) \\
&= 
P_{\lambda+2}(\tilde{q})^{-1}\cdot
Q^{\vee}_{-4-\lambda}(\tilde{q}^{-1})^T \cdot
\qty(
\begin{array}{c|ccc}
-1 & 0 & 0 & 0 \\
\hline
0 & 2 & 0 & 0 \\
0 & 0 & 1 & 0 \\
0 & 0 & 0 & 1 
\end{array}
)
\cdot j_S(\tau) \cdot Q_{\lambda}(q) \\
&= 
P_{\lambda+2}(\tilde{q})^{-1}\cdot
Q^{\vee}_{-4-\lambda}(\tilde{q}^{-1})^T \cdot \\
&\qquad \times
\qty(
\begin{array}{c|ccc}
1 & 0 & 0 & 0 \\
\hline
0 & 0 & (-1)^{\frac14}\sqrt{2}\tau^{-\frac{1}{2}}  & 0 \\
0 & (-1)^{\frac14}\sqrt{2}\tau^{-\frac{1}{2}}  & 0 & 0 \\
0 & 0 & 0 & (-1)^{\frac14}\tau^{-\frac{1}{2}} 
\end{array}
) \cdot Q_{\lambda}(q)
\end{align*}
extends holomorphically to the cut plane. 
\end{proof}

\subsection{Speculations on Langlands duality}\label{subsec:Langlands}
We conclude with some speculative remarks on interpretation of the matrix of $q$-series we have observed above, and on Langlands duality for skein modules. 

\begin{rmk}
The columns of $Q(q)$ and $Q^\vee(q)$ are labeled by basis vectors of $\Sk^{\mathrm{SL}_2}_q(Y)$, while their rows are presumably labeled by $\mathrm{SL}_2(\mathbb{C})$-flat connections\footnote{Or, more generally, some equivariant version of sheaf-theoretic $\mathrm{SL}_2(\mathbb{C})$-Floer homology \cite{AbouzaidManolescu}. } on $Y$, with the top row corresponding to the trivial flat connection. 
Thus, it is tempting to interpret the matrices $Q(q)$ and $Q^\vee(q)$ as the partition functions (up to suitable normalization) of the Kapustin-Witten theory on $Y \times [0,\infty)$ and on $Y \times (-\infty, 0]$, with the boundary condition at $0$ given by the Nahm pole boundary specified by an element of the skein module, and the boundary condition at $\pm \infty$ given by the choice of flat connection. 
Considering the Kapustin-Witten partition function on $Y \times [0,1]$, with the two boundary conditions specified by elements of the skein module, it should give a semilinear\footnote{That is, $\langle f(q) L_1, g(q) L_2\rangle = f(q)g(q^{-1}) \langle L_1, L_2\rangle$ for any $f(q), g(q) \in \mathbb{Z}[q^{\pm \frac14}]$ and $L_1, L_2 \in \Sk_q^{\mathrm{SL}_2}(Y)$.} pairing $\langle, \rangle$ on the skein module. 
This might be the origin of the semilinear pairing in Proposition \ref{prop:semilinear-pairing}, though it is not clear why it should take values in Laurent polynomials, rather than power series.  
\end{rmk}

\begin{qn}
For an arbitrary $3$-manifold $Y$ (say, an integer homology sphere), is there a naturally defined semilinear pairing
\[
\langle, \rangle : \mathrm{Sk}^{\mathrm{SL}_2}_q(Y) \times \mathrm{Sk}^{\mathrm{SL}_2}_q(Y) \rightarrow \mathbb{Z}[q^{\pm \frac14}]?
\]
\end{qn}

\begin{rmk}
In a similar vein, the rows and columns of the automorphy factor $j_{\gamma}(\tau)$ are presumably labeled by flat connections, while the rows and columns of the cocycle $\Omega_{\gamma}(\tau)$ are labeled by basis vectors of $\Sk(Y)$. 
Recall from \cite[Conj. 4.2]{Jordan} that, for a simply connected, simply-laced $G$, the Langlands duality for skein modules can be phrased as a transcendental isomorphism
\[
\Sk^{G}_q(Y)_{a,b} \overset{S}{\cong} \Sk^G_{\tilde{q}}(Y)_{b,a},
\]
where $\Sk^{G}_q(Y)_{a,b}$ denotes the $a$-graded component of the $b$-twisted $G$-skein module, with $a \in H_1(Y;Z(G)^\vee)$ and $b \in H_1(Y;Z(G)) \cong H_1(Y;Z(G)^\vee)$. 
Even when $Y$ is an integer homology sphere, so that we can ignore the grading and twisting, the Langlands duality predicts a non-trivial transcendental isomorphism
\[
\Sk^{G}_q(Y) \overset{S}{\cong} \Sk^G_{\tilde{q}}(Y). 
\]
The cocycle $\Omega_S(\tau)$ provides a natural candidate for such a map. 
That is, we conjecture that it fits into the following a commutative square:
\[
\begin{tikzcd}
\Sk_q^{\mathrm{SL}_2}(Y) \arrow[r, "S"] \arrow[d, "\widehat{Z}"]& \Sk_{\tilde{q}}^{\mathrm{SL}_2}(Y) \arrow[d, "\widehat{Z}"] \\
V_q(Y) \arrow[r, "\Omega_S(\tau)"] & V_{\tilde{q}}(Y)
\end{tikzcd}.
\]
For $3$-manifolds $Y$ with non-trivial $H_1(Y;\mathbb{Z}/2)$, we expect the relation to involve an intricate interplay between grading, twisting, and twisted spin$^c$ structures. 
We plan to study this in future work. 
\end{rmk}

\appendix
\section{Review of perturbative expansion of $R$-matrices}\label{sec:perturbative-expansion-review}

\subsection{Rozansky's lemmas and their extension}
\begin{lem}[{\cite[Lem. A.1]{Rozansky}}]\label{lem:qbin}
For any $n\geq 0$, 
\begin{equation}\label{eq:qbin-pert-expansion}
\qbin{n}{k}_q = 
\binom{n}{k}\sum_{m\geq 0} Q_{m}(n,k) \hbar^m
\end{equation}
for some polynomial $Q_{m}(n,k) \in \mathbb{Q}[n,k]$ of $n$-degree $\leq m$ and total degree $\leq 2m$. 
\end{lem}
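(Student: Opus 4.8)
The plan is to reduce everything to an elementary manipulation of formal power series in $\hbar$, with the one real difficulty — that the product formula for $\qbin{n}{k}_q$ has $k$ factors, so $k$ a priori appears in the \emph{shape} of the expression, not just in coefficients — handled by passing to logarithms and invoking Faulhaber's formula.

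First I would record the single-quantum-integer expansion. Writing $q=e^{\hbar}$ and $[m]_q=\frac{1-q^m}{1-q}$, one has
\[
[m]_q\big\vert_{q=e^{\hbar}} = m\cdot\frac{h(m\hbar)}{h(\hbar)},\qquad h(t):=\frac{e^{t}-1}{t}=\sum_{j\geq 0}\frac{t^{j}}{(j+1)!},
\]
so that $[m]_q\vert_{q=e^{\hbar}}=m\sum_{d\geq 0}R_d(m)\hbar^d$ with $R_0=1$ and $R_d(m)\in\mathbb{Q}[m]$ of degree $\deg_m R_d\leq d$ (the coefficient of $\hbar^j$ in $h(m\hbar)$ is $m^j/(j+1)!$, and $1/h(\hbar)$ is a power series in $\hbar$ with constant term $1$ and no $m$-dependence). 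Substituting into $\qbin{n}{k}_q=\prod_{i=1}^{k}\frac{[n-k+i]_q}{[i]_q}$ and pulling out the rational prefactors, using $\prod_{i=1}^{k}\frac{n-k+i}{i}=\binom{n}{k}$, gives
\[
\qbin{n}{k}_q\Big\vert_{q=e^{\hbar}} = \binom{n}{k}\cdot\prod_{i=1}^{k}\frac{\sum_{d\geq 0}R_d(n-k+i)\hbar^d}{\sum_{d\geq 0}R_d(i)\hbar^d}.
\]
It therefore remains to expand the right-hand product and verify the degree bounds for its coefficients $Q_m(n,k)$.

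Next I would take logarithms. Since each series $\sum_d R_d(m)\hbar^d$ has constant term $1$, $\log\big(\sum_d R_d(m)\hbar^d\big)=\sum_{e\geq 1}S_e(m)\hbar^e$ where $S_e\in\mathbb{Q}[m]$ has $\deg_m S_e\leq e$ (each $S_e$ being a $\mathbb{Q}$-combination of products $R_{d_1}\cdots R_{d_j}$ with $\sum d_i=e$). Hence $\log$ of the above product equals $\sum_{e\geq 1}\big(\sum_{i=1}^{k}S_e(n-k+i)-\sum_{i=1}^{k}S_e(i)\big)\hbar^e$. The crucial step is to rewrite these inner sums as $\sum_{i=1}^{k}S_e(n-k+i)=T_e(n)-T_e(n-k)$ and $\sum_{i=1}^{k}S_e(i)=T_e(k)$, where $T_e(N):=\sum_{j=1}^{N}S_e(j)$ is the Faulhaber polynomial of $S_e$, satisfying $T_e(0)=0$ and $\deg_N T_e\leq e+1$; these are genuine polynomial identities in $n,k$ because they hold for all nonnegative integers. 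Setting $W_e(n,k):=T_e(n)-T_e(n-k)-T_e(k)$, the cancellation of the top-degree term of $T_e$ in $T_e(n)-T_e(n-k)$ gives $\deg_n W_e\leq e$, while $\deg_{n,k}W_e\leq e+1$ is immediate. Finally, exponentiating, $Q_m(n,k)=[\hbar^m]\exp\big(\sum_{e\geq 1}W_e(n,k)\hbar^e\big)$ is a $\mathbb{Q}$-combination of products $W_{e_1}\cdots W_{e_j}$ with $e_1+\cdots+e_j=m$ (so $j\leq m$), whence $\deg_n Q_m\leq\sum_i e_i=m$ and $\deg_{n,k}Q_m\leq\sum_i(e_i+1)=m+j\leq 2m$, as claimed.

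The main obstacle — and the only point I would isolate as a sublemma — is precisely this log/Faulhaber/exp round trip: that $\sum_{i=1}^k S_e(n-k+i)$ collapses to a difference of Faulhaber polynomials, and that the degree bounds $\deg_n\leq e$, $\deg_{n,k}\leq e+1$ for $W_e$ survive intact through exponentiation to give $\deg_n\leq m$, $\deg_{n,k}\leq 2m$ for $Q_m$. Everything else is routine bookkeeping; in particular the bound $\deg_{n,k}Q_m\leq 2m$ is sharp and is attained by the term $\tfrac{1}{m!}W_1(n,k)^m$.
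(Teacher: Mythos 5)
The paper does not actually prove this lemma --- it is quoted verbatim from Rozansky (\cite[Lem.\ A.1]{Rozansky}) and only the list of the first few $Q_m$ and the subsequent extension to all $n\in\mathbb{Z}$ are the paper's own content --- so there is no in-paper proof to compare against. Judged on its own, your argument is correct and complete. The reduction $\qbin{n}{k}_q=\binom{n}{k}\prod_{i=1}^k\frac{\sum_d R_d(n-k+i)\hbar^d}{\sum_d R_d(i)\hbar^d}$ with $R_d$ of degree $\leq d$ is right; the passage to logarithms is legitimate because every factor has constant term $1$; the telescoping $\sum_{i=1}^k S_e(n-k+i)=T_e(n)-T_e(n-k)$ via the Faulhaber polynomial $T_e$ (degree $\leq e+1$, $T_e(0)=0$) is valid for all integers $n\geq k\geq 0$, which is all the lemma requires; and the degree bookkeeping through $\exp$ ($\deg_n W_{e_1}\cdots W_{e_j}\leq\sum e_i=m$, $\deg_{n,k}\leq\sum(e_i+1)=m+j\leq 2m$) is exactly the source of the asymmetric bounds ``$n$-degree $\leq m$, total degree $\leq 2m$.'' As a sanity check, your construction gives $W_1=\tfrac{1}{2}k(n-k)$ and $Q_2=W_2+\tfrac12W_1^2=\tfrac{1}{24}k(n-k)(3k(n-k)+n+1)$, reproducing the polynomials tabulated in the paper. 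One small point worth making explicit if you write this up: the identity is only claimed (and only needed) for integer $n\geq 0$ and integer $k$ with $0\leq k\leq n$, so the fact that $\sum_{i=1}^k S_e(n-k+i)$ is not literally a two-variable polynomial expression is harmless --- what matters is that the two-variable polynomial $W_e$ agrees with it at all such integer points, which your telescoping gives. The log/Faulhaber/exp route is essentially the classical way this estimate is obtained (and is in the spirit of Rozansky's original appendix), so nothing here is in tension with the source being cited.
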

The first few of these polynomials are:
\begin{align*}
Q_0(n,k) &= 1,\\
Q_1(n,k) &= \frac{1}{2} k (n-k),\\
Q_2(n,k) &= \frac{1}{24} k (n-k) \left(3k (n-k) +n +1\right), \\
Q_3(n,k) &= \frac{1}{48} k^2 (k+1) (n-k)^2 (n-k+1).
\end{align*}
Note, $Q_m(n,k) = Q_m(n,n-k)$, which follows from the corresponding symmetry of (quantum) binomial coefficients.

\begin{lem}[{\cite[Lem. A.2, A.3]{Rozansky}}]\label{lem:Pmn}
Let
\[
P(m,n) := \prod_{m+1 \leq l \leq m+n}(1-\alpha(q^l - 1)). 
\]
Then, for any $n\geq 0$, 
\begin{align}\label{eq:P-pert-expansion}
P(m,n) 
&= \sum_{j\geq 0}(-\alpha)^j \hbar^j \prod_{0\leq k\leq j-1}(n-k) \cdot \sum_{d\geq 0} \hbar^d T_{j,d}(m,n),
\end{align}
where $T_{j,d}(m,n)$ is some polynomial in $m$ and $n$ of degree $\leq j+d$. 
\end{lem}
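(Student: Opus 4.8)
The plan is to expand the product over subsets and reduce the assertion to a divisibility property for sums of elementary symmetric functions. First I would write
\[
P(m,n)=\prod_{l=m+1}^{m+n}\bigl(1-\alpha(q^l-1)\bigr)=\sum_{j\ge 0}(-\alpha)^j\,E_j(m,n),\qquad E_j(m,n):=e_j\bigl(q^{m+1}-1,\dots,q^{m+n}-1\bigr),
\]
where $e_j$ denotes the $j$-th elementary symmetric polynomial. Since $q^l-1=e^{l\hbar}-1=\hbar\sum_{k\ge 0}\tfrac{l^{k+1}}{(k+1)!}\hbar^k$ is divisible by $\hbar$, each $E_j$ is $O(\hbar^j)$, and the statement amounts to showing that $\hbar^{-j}E_j(m,n)=\sum_{d\ge 0}\hbar^d U_{j,d}(m,n)$ where $U_{j,d}$ is a polynomial in $m,n$ divisible by $\prod_{0\le k\le j-1}(n-k)$ with quotient $T_{j,d}$ of total degree $\le j+d$.

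Next I would extract the coefficients explicitly. Writing $\psi(a,\hbar):=\tfrac{e^{a\hbar}-1}{\hbar}$, one has $\hbar^{-j}E_j=\sum_{1\le l_1<\dots<l_j\le n}\prod_{i=1}^j\psi(m+l_i,\hbar)$, so the coefficient of $\hbar^d$ is $\sum_{k_1+\dots+k_j=d}\ \sum_{1\le l_1<\dots<l_j\le n}\prod_i\tfrac{(m+l_i)^{k_i+1}}{(k_i+1)!}$. For each composition the inner summand is a polynomial in $m,l_1,\dots,l_j$ of total degree $d+j$. The elementary fact that summing a polynomial of total degree $p$ over the simplex $\{1\le l_1<\dots<l_j\le n\}$ produces a polynomial in $n$ of degree $\le p+j$ (via Faulhaber's formula together with inclusion–exclusion on coinciding indices) then shows that $U_{j,d}(m,n)$ is a polynomial in $m$ and $n$ of total degree $\le d+2j$.

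The remaining point, which produces the falling-factorial prefactor, is a vanishing observation: for every integer $n$ with $0\le n\le j-1$ we have $E_j(m,n)=0$, since $e_j$ of fewer than $j$ arguments is zero. Consequently $n\mapsto U_{j,d}(m,n)$ vanishes at $n=0,1,\dots,j-1$, hence is divisible by $\prod_{0\le k\le j-1}(n-k)$; writing $U_{j,d}=\prod_{0\le k\le j-1}(n-k)\cdot T_{j,d}$ and comparing top-degree parts gives $\deg T_{j,d}\le(d+2j)-j=d+j$. Reassembling the series yields the claimed identity.

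I expect the only genuinely fiddly step to be the "discrete integration over a simplex" degree bound: the summand is not symmetric in $l_1,\dots,l_j$, so one really iterates $\sum_{l_j=1}^{n}\sum_{l_{j-1}=1}^{l_j-1}\cdots$ rather than invoking symmetric-function identities, and that is where the degree bookkeeping, though routine, must be done carefully. One could instead argue by induction on $n$ using $E_j(m,n)=E_j(m,n-1)+(q^{m+n}-1)E_{j-1}(m,n-1)$, but propagating divisibility by $\prod_{0\le k\le j-1}(n-k)$ through that recursion is clumsier than invoking the vanishing of $E_j(m,n)$ for $n<j$ directly.
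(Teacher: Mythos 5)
Your proof is correct, but note that the paper does not actually prove this lemma: it is imported verbatim from Rozansky (Lemmas A.2 and A.3 of that reference) with only the list of the first few $T_{j,d}$ and no argument, so there is no in-paper proof to match against. Your route --- expanding $P(m,n)=\sum_j(-\alpha)^jE_j(m,n)$ with $E_j$ an elementary symmetric polynomial in the $q^{m+l}-1$, extracting $U_{j,d}$ by iterated Faulhaber summation over the simplex to get total degree $\le d+2j$, and then obtaining the falling-factorial prefactor from the vanishing $e_j(\text{fewer than }j\text{ arguments})=0$ --- is a clean, self-contained derivation, and the degree bookkeeping checks out against the displayed examples (e.g.\ $E_1$ gives $n\cdot\frac{2m+n+1}{2}\hbar+O(\hbar^2)$, matching $T_{1,0}$). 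The one point you should make explicit, and which you correctly flag as the fiddly step, is that the divisibility argument needs the \emph{interpolating polynomial} $U_{j,d}(m,\cdot)$, not just the function $E_j(m,\cdot)$, to vanish at $n=0,\dots,j-1$; this holds because the iterated Faulhaber polynomials agree with the (empty) simplex sum at those values of $n$, but it deserves a sentence. It is also worth observing that the recursion $E_j(m,n+1)=E_j(m,n)+(q^{m+n+1}-1)E_j(m,n)\cdot(\dots)$ you set aside is essentially the manipulation the paper \emph{does} perform in the subsequent lemma extending \eqref{eq:P-pert-expansion} to all $n\in\mathbb{Z}$; your symmetric-function argument and that extension argument are complementary, since once your $T_{j,d}$ are known to be polynomials the identity for negative $n$ follows from the paper's "a polynomial vanishing on $\mathbb{Z}_{\ge0}$ is zero" step.
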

The first few of these polynomials are:
\begin{align*}
T_{0,d}(m,n) &= \delta_{d,0},\\
T_{1,0}(m,n) &= \frac{1}{2}(2m + n + 1), \\
T_{1,1}(m,n) &= \frac{1}{12} \left(6 m^2+6 m (n+1)+2 n^2+3 n+1\right), \\
T_{2,0}(m,n) &= \frac{1}{24} \left(12 m^2+12 m (n+1)+3 n^2+5 n+2\right), \\
T_{1,2}(m,n) &= \frac{1}{24} \left(4 m^3+6 m^2 (n+1)+m \left(4 n^2+6 n+2\right)+n (n+1)^2\right), \\
T_{2,1}(m,n) &= \frac{1}{12} \left(6 m^3+9 m^2 (n+1)+m \left(5 n^2+8 n+3\right)+n (n+1)^2\right), \\
T_{3,0}(m,n) &= \frac{1}{48} \left(8 m^3+12 m^2 (n+1)+2 m \left(3 n^2+5 n+2\right)+n (n+1)^2\right). \\
\end{align*}

When it comes to the inverted state sums discussed in Section \ref{sec:skein-q-series}, we will actually need these lemmas for \emph{all} $n\in \mathbb{Z}$, not just $n \geq 0$. 
This follows from the fact that a polynomial in $n$ vanishing for all $n \geq 0$ is identically zero; we include the argument for completeness: 
\begin{lem}
Lemmas \ref{lem:qbin} and \ref{lem:Pmn} hold for all integers $n \in \mathbb{Z}$, with the same polynomials $Q_m(n,k)$ and $T_{j,d}(m,n)$. 
\end{lem}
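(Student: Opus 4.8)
The plan is the familiar ``a polynomial vanishing at infinitely many points is zero'' argument; the only genuine work is to make sense of both sides for negative $n$ and to verify that the relevant $\hbar$-expansion coefficients are honestly polynomial in $n$.

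First I would fix the conventions that extend the two objects to negative $n$. For the quantum binomial, I use the product formula $\qbin{n}{k}_q := \prod_{i=0}^{k-1}\frac{1-q^{n-i}}{1-q^{k-i}}$, which is a Laurent polynomial in $q$ for every $n\in\mathbb{Z}$ and $k\ge 0$; for $P(m,n)$ I use $P(m,n):=\prod_{l=m+n+1}^{m}(1-\alpha(q^{l}-1))^{-1}$ when $n<0$, consistent with $P(m,0)=1$ and $P(m,n)/P(m,n-1)=1-\alpha(q^{m+n}-1)$. These are exactly the extensions used by the inverted $R$-matrices in Section \ref{sec:skein-q-series}.

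Next I would show that, after $q=e^{\hbar}$, the coefficient of each power $\hbar^{N}$ is a polynomial in $n$ (with $k$ fixed, resp. $m$ fixed and $\alpha$ formal). For $\qbin{n}{k}_q$ this is immediate: writing $1-e^{a\hbar}=-a\hbar\, g(a\hbar)$ with $g(z)=(1-e^{z})/(-z)\in 1+z\mathbb{Q}[[z]]$, the product telescopes into $\binom{n}{k}\cdot\prod_{i=0}^{k-1} g((n-i)\hbar)/g((k-i)\hbar)$, and the $\hbar^{N}$-coefficient of this \emph{finite} product over $i$ is visibly a polynomial in $n$. For $P(m,n)$ the $\hbar^{N}$-coefficient is a $\mathbb{Q}[\alpha]$-linear combination of power sums $\sum_{l=m+1}^{m+n}l^{p}$, each of which is Faulhaber's polynomial in $m$ and $n$ — a polynomial identity valid for all integers $n$, being characterized by $S_{p}(m,n)-S_{p}(m,n-1)=(m+n)^{p}$ and $S_{p}(m,0)=0$.

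Finally, for each fixed $m,k$ the polynomial $[\hbar^{m}]\qbin{n}{k}_q$ and the polynomial $\binom{n}{k}Q_{m}(n,k)$ agree for all $n\ge 0$ by Lemma \ref{lem:qbin}, hence coincide identically; this is the claim, and the degree bounds on $Q_{m}$ are inherited from the $n\ge 0$ case. Likewise, matching the coefficient of each $\alpha^{j}\hbar^{N}$ in the identity of Lemma \ref{lem:Pmn} gives polynomials in $(m,n)$ agreeing for all $n\ge 0$, hence identically, which yields the extension with the same polynomials $T_{j,d}(m,n)$. The main obstacle — such as it is — is purely bookkeeping: one must be careful that the $\hbar$-coefficients really are \emph{polynomial} in $n$ (the Faulhaber point for $P(m,n)$, the telescoping identity for $\qbin{n}{k}_q$), since it is this polynomiality, and not the numerical identities alone, that drives the ``agree at infinitely many points'' step; in fact the same argument shows both lemmas hold for all $n\in\mathbb{C}$.
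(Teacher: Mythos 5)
Your proof is correct, and it applies the shared key principle — a polynomial vanishing on $\mathbb{Z}_{\ge 0}$ vanishes identically — at a different place than the paper does. The paper substitutes the right-hand sides of \eqref{eq:qbin-pert-expansion} and \eqref{eq:P-pert-expansion} into the recurrences $\qbin{n+1}{k}_q=\frac{1-q^{n+1}}{1-q^{n-k+1}}\qbin{n}{k}_q$ and $P(m,n+1)=P(m,n)(1-\alpha(q^{m+n+1}-1))$, shows that the $\hbar$-coefficients of the resulting defect are polynomials in $n$ vanishing for all $n\ge 0$ and hence identically, and then propagates the identities to negative $n$ along the recurrences. You instead establish directly that the $\hbar$-coefficients of the \emph{left-hand sides} are polynomial in $n$ (the telescoping factorization for $\qbin{n}{k}_q$, Faulhaber polynomials for the power sums in $\log P(m,n)$) and then compare coefficients with the manifestly polynomial right-hand sides. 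Your route costs you this extra polynomiality step but buys three things: it forces you to make explicit which extensions of $\qbin{n}{k}_q$ and $P(m,n)$ to $n<0$ are meant (and your choices agree with the ones implicitly used by the inverted $R$-matrices); it avoids the downward recurrence descent, which for the quantum binomial is delicate at $n=-1$ where the multiplier $\frac{1-q^{n+1}}{1-q^{n-k+1}}$ vanishes; and it yields the identities for all $n\in\mathbb{C}$, not just $n\in\mathbb{Z}$. Two cosmetic remarks: the $\hbar^{N}$-coefficient of $P(m,n)$ is a polynomial expression in the power sums (after exponentiating $\log P$), not merely a linear combination of them, though this changes nothing about polynomiality in $(m,n)$; and the comparison of $[\hbar^{m}]\qbin{n}{k}_q$ with $\binom{n}{k}Q_{m}(n,k)$ really uses only the infinitely many values $n\ge k$, the cases $0\le n<k$ being the trivial identity $0=0$.
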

\begin{proof}
In both cases, we will show that the perturbative series defined by the right-hand sides of \eqref{eq:qbin-pert-expansion} and \eqref{eq:P-pert-expansion} satisfies the relevant recurrence relations for all $n$, hence both \eqref{eq:qbin-pert-expansion} and \eqref{eq:P-pert-expansion} hold for all $n \in \mathbb{Z}$. 

Lemma \ref{lem:qbin} implies that, for any $n\geq 0$, 
\begin{align*}
&\qbin{n+1}{k}_q - \frac{1-q^{n+1}}{1-q^{n-k+1}}\qbin{n}{k}_q \\
&= \frac{1}{n-k+1}\binom{n}{k} \sum_{m\geq 0} \hbar^m \bigg(
(n+1) Q_m(n+1,k) - Q_m(n,k)\frac{\sum_{a\geq 0} \frac{(n+1)^{a+1}}{(a+1)!} \hbar^a}{\sum_{b\geq 0} \frac{(n-k+1)^{b}}{(b+1)!} \hbar^b} \bigg)\\
&= \frac{1}{n-k+1}\binom{n}{k} \sum_{m'\geq 0} \hbar^{m'} Q'_{m'}(n,k),
\end{align*}
where $Q'_{m'}(n,k)$ are the polynomials obtained by expanding the second line of the equation into power series in $\hbar$. 
Since each $Q'_{m'}(n,k)$ vanishes for all $n\geq 0$, it must vanish identically. 
Thus, \eqref{eq:qbin-pert-expansion} holds for all $n \in \mathbb{Z}$. 

Lemma \ref{lem:Pmn} implies that, for any $n\geq 0$, 
\begin{align*}
&P(m, n+1)- P(m,n)(1-\alpha(q^{m+n+1}-1)) \\
&= 
\sum_{j\geq 0}(-\alpha)^j \hbar^j \prod_{0\leq k\leq j-1}(n+1-k) \cdot \sum_{d\geq 0} \hbar^d T_{j,d}(m,n+1)\\
&\quad- \sum_{j\geq 0}(-\alpha)^j \hbar^j \prod_{0\leq k\leq j-1}(n-k) \cdot \sum_{d\geq 0} \hbar^d T_{j,d}(m,n)
\qty(
1 - \alpha \sum_{l\geq 0} \frac{(m+n+1)^l}{l!} \hbar^{l}
) \\
&= \sum_{j\geq 0}(-\alpha)^j \hbar^j \prod_{0\leq k\leq j-2}(n-k) \cdot \sum_{d\geq 0} \hbar^d
\bigg(
(n+1)T_{j,d}(m,n+1) \\
&\quad - (n-j+1)T_{j,d}(m,n) - \sum_{0\leq d' \leq d} \frac{(m+n+1)^{d-d'+1}}{(d-d'+1)!} T_{j-1,d'}(m,n)
\bigg),
\end{align*}
so the polynomial in the big parenthesis in the last line vanishes for $n\geq 0$, hence it vanishes identically. 
It follows that \eqref{eq:P-pert-expansion} holds for all $n\in \mathbb{Z}$. 
\end{proof}

\begin{cor}\label{cor:diff-op-inversion}
The differential operators $\mathsf{D}_d$ and $\mathsf{D}'_d$ constructed below remain the same, even when some spins on $K$ are inverted. 
\end{cor}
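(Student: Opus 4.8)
The plan is to trace through the construction of $\mathsf{D}_d$ and $\mathsf{D}'_d$ in Appendix \ref{subsec:diff-op-lemmas} and observe that the \emph{only} place it uses the hypothesis ``the spins on $K$ are non-negative'' is through the two perturbative expansions of Lemmas \ref{lem:qbin} and \ref{lem:Pmn}, which we have just upgraded to hold for all $n\in\mathbb{Z}$ with the \emph{same} universal polynomials $Q_m$ and $T_{j,d}$. First I would recall how $\mathsf{D}_d$ arises: after pulling the monomial prefactors in $x_1,x_2$ out of the $R$-matrix entry $R(x_1,x_2,e^{\hbar})_{i,j}^{i',j'}$, what remains is a product of a quantum binomial $\qbin{i}{j'}_q$, handled by \eqref{eq:qbin-pert-expansion}, and a Pochhammer-type factor, handled by \eqref{eq:P-pert-expansion} after rewriting it in the form $P(j,\,i-j')$ up to a power of $(1-x_2^{-1})$ absorbed into the $\gamma^{i-j'}$ monomial. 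Together these express the entry as an $\hbar$-series whose $\hbar^d$-coefficient is a polynomial of degree $\le 2d$ in the spins times the leading monomial $\binom{i}{j'}\alpha^j\beta^{j'}\gamma^{i-j'} = \mathsf{R}(\alpha,\beta,\gamma)_{i,j}^{i',j'}$; one then converts that spin-polynomial into a constant-coefficient differential operator in $\alpha,\beta,\gamma$ using $\alpha\partial_\alpha(\alpha^j) = j\,\alpha^j$, $\gamma\partial_\gamma(\gamma^{i-j'}) = (i-j')\gamma^{i-j'}$, and the like. This is the definition of $\mathsf{D}_d$, and likewise of $\mathsf{D}'_d$ in the relative setting where the spins on $L$ are fixed and only $\beta$-derivatives occur; the degree bounds $\deg_\partial \mathsf{D}_d \le 2d$ and $\deg_\partial \mathsf{D}'_d \le d$ come from those on $Q_m$ and $T_{j,d}$.

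The key point is that every step above is blind to the sign of the spin indices. The conversion ``polynomial in exponents $\leftrightarrow$ differential operator in parameters'' is valid for monomials $\alpha^j$ with $j\in\mathbb{Z}$, and the coefficient $\binom{i}{j'}$ in $\mathsf{R}(\alpha,\beta,\gamma)_{i,j}^{i',j'} = \delta_{i+j,i'+j'}\binom{i}{j'}\alpha^j\beta^{j'}\gamma^{i-j'}$ is read as the generalized binomial coefficient, so the formula for $\mathsf{R}$ persists for negative $i$; this is consistent with the natural extension of the finite-dimensional $R$-matrices to all integer spin indices used to define the inverted state sum. The substantive input — that \eqref{eq:qbin-pert-expansion} and \eqref{eq:P-pert-expansion} hold with the same $Q_m, T_{j,d}$ for all $n\in\mathbb{Z}$ — is precisely the content of the lemma immediately preceding this corollary. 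Hence the derivations of Lemmas \ref{lem:diff-op} and \ref{lem:diff-op-relative} run verbatim when some or all spins on arcs of $K$ are negative, producing the \emph{same} operators $\mathsf{D}_d, \mathsf{D}'_d$.

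Concretely I would carry this out in three short steps: (i) isolate in the Appendix proof the two invocations of the $n\ge 0$ versions of Lemmas \ref{lem:qbin} and \ref{lem:Pmn}; (ii) replace them by their $n\in\mathbb{Z}$ counterparts; (iii) observe that the remaining manipulations — extracting $x_1,x_2$-monomials, passing from spin-polynomials to parameter differential operators, and regrouping by powers of $\hbar$ — are identities of formal Laurent series in $\alpha^{\pm 1/2},\beta^{\pm 1/2},\gamma^{\pm 1}$ that never reference the sign of the spins. I do not expect a genuine obstacle; the only point needing a line of care is that the standard conventions for $\qbin{i}{j'}_q$ and for the Pochhammer symbol $(q^{j+1}x_2^{-1};q)_{i-j'}$ at negative arguments agree with the extension of the $R$-matrix fixed in Section \ref{sec:skein-q-series} — i.e., that ``invert the spins'' and ``analytically continue in the spin'' commute — which is immediate from the closed formulas in Table \ref{tab:R-matrices}.
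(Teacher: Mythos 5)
Your proposal is correct and matches the paper's reasoning: the corollary is stated immediately after the lemma extending \eqref{eq:qbin-pert-expansion} and \eqref{eq:P-pert-expansion} to all $n\in\mathbb{Z}$ with the same polynomials $Q_m$ and $T_{j,d}$, precisely because those two expansions are the only inputs to the construction of $\mathsf{D}_d$ and $\mathsf{D}'_d$ that reference the sign of the spins. The remaining observation — that the passage from spin-polynomials to differential operators in $\alpha,\beta,\gamma$ is an identity of monomials valid for all integer exponents — is exactly the implicit content of the paper's (proofless) corollary.
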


\subsection{Proof of Lemmas \ref{lem:diff-op} and \ref{lem:diff-op-relative}}\label{subsec:diff-op-lemmas}
We will only consider $R$ (i.e., positive crossing), as the analogous statements for $R^{-1}$ follow immediately from those of $R$ from the identity
\[
R^{-1}(x_1, x_2, e^{\hbar})_{i,j}^{i', j'} = R(x_2^{-1},x_1^{-1},e^{-\hbar})_{j,i}^{j',i'}. 
\]
\begin{proof}[Proof of Lemma \ref{lem:diff-op}]
The proof is essentially identical to that of \cite[Cor. 3.1]{Rozansky}; the only difference is that 
we are using different colors, $V_\infty(x_1)$ and $V_\infty(x_2)$, for the two strands.\footnote{What is nice about this, as opposed to the case when the two strands are colored the same, is that the differential operator is determined essentially uniquely, because $\alpha$ and $\beta$ are specialized to different values.}
That is, we expand the $R$-matrix using Lemmas \ref{lem:qbin} and \ref{lem:Pmn}, and replace every $j$, $j'$, and $i-j'$ by $\alpha \partial_\alpha$, $\beta \partial_\beta$, and $\gamma \partial_\gamma$, respectively, as follows:
\begin{align*}
&x_1^{\frac{1}{4}}x_2^{\frac{1}{4}} R(x_1, x_2, q)_{i,j}^{i', j'} \\
&= 
\delta_{i+j, i'+j'} x_1^{-\frac{i-j'}{4} - \frac{j}{2}} x_2^{\frac{i-j'}{4} - \frac{j'}{2}} q^{(j+\frac{1}{2})(j'+\frac{1}{2})} \qbin{i}{j'}_q \prod_{j+1 \leq l \leq i'}(1 - q^l x_2^{-1}) \\
&= 
\delta_{i+j, i'+j'} x_1^{-\frac{i-j'}{4} - \frac{j}{2}} x_2^{\frac{i-j'}{4} - \frac{j'}{2}} 
\sum_{a\geq 0} \frac{(j+\frac{1}{2})^a(j'+\frac{1}{2})^a}{a!}\hbar^a \cdot
\binom{i}{j'} \sum_{b\geq 0} Q_b(i, i-j') \hbar^b \\
&\quad \times
(1 - x_2^{-1})^{i-j'} \sum_{n\geq 0} \qty(-\frac{x_2^{-1}}{1-x_2^{-1}})^{n} \hbar^n \prod_{0\leq k\leq n-1}(i-j' - k) \sum_{m\geq 0} \hbar^m T_{n,m}(j, i-j') \\
&= \sum_{d \geq 0} \hbar^d
\left(\sum_{\substack{a,b,n,m \geq 0 \\ a+b+n+m = d}}
\qty(- \alpha^{\frac{1}{2}} \beta^{\frac{3}{2}} \gamma^{-1})^n
\frac{(\alpha \partial_{\alpha} + \frac{1}{2})^a (\beta \partial_{\beta} + \frac{1}{2})^a}{a!} 
Q_b(\gamma \partial_{\gamma} + \beta \partial_{\beta}, \gamma \partial_{\gamma}) \right.\\
&\quad \times \left.
\prod_{0\leq k\leq n-1}(\gamma \partial_{\gamma} - k) \;T_{n,m}(\alpha \partial_{\alpha}, \gamma \partial_{\gamma}) \right)
\mathsf{R}(\alpha, \beta, \gamma)_{i,j}^{i',j'} \bigg\vert_{\substack{\alpha = x_1^{-\frac{1}{2}} \\ \beta = x_2^{-\frac{1}{2}} \\ \gamma = x_1^{-\frac{1}{4}} x_2^{\frac{1}{4}} (1-x_2^{-1})}}.
\end{align*}
In other words, 
\[
x_1^{\frac{1}{4}}x_2^{\frac{1}{4}} R(x_1, x_2, q)_{i,j}^{i', j'} = 
\sum_{d \geq 0} \hbar^d \mathsf{D}_d \mathsf{R}(\alpha, \beta, \gamma)_{i,j}^{i',j'} \bigg\vert_{\substack{\alpha = x_1^{-\frac{1}{2}} \\ \beta = x_2^{-\frac{1}{2}} \\ \gamma = x_1^{-\frac{1}{4}} x_2^{\frac{1}{4}} (1-x_2^{-1})}},
\]
where $\mathsf{D}_d \in \mathbb{Q}[\alpha^{\frac{1}{2}}, \beta^{\frac{1}{2}}, \gamma^{\pm 1}, \partial_{\alpha}, \partial_{\beta}, \partial_{\gamma}]$ is a sequence of differential operators defined as above. 
Moreover, the order of $\mathsf{D}_d$ (i.e., the $\partial$-degree of $\mathsf{D}_d$) is bounded above by
\[
\deg_{\partial} \mathsf{D}_d \leq 2a + \deg Q_b + n + \deg T_{n,m} \leq 2a + 2b + n + (n+m) \leq 2d,
\]
as claimed. 
\end{proof}

\begin{proof}[Proof of Lemma \ref{lem:diff-op-relative}]
The proof is almost identical; the point is that, once we fix $\underline{i}', \underline{j} \in \{0, 1, \cdots, m-1\}$, we can find differential operators purely in terms of $\beta$ and $\partial_\beta$: 
\begin{align*}
&x_1^{\frac{1}{4}}x_2^{\frac{1}{4}} R(x, q^m, q)_{i, \underline{j}}^{\underline{i}', j'} \\
&= 
\delta_{i+\underline{j}, \underline{i}'+j'} 
x^{-\frac{i-j'}{4} - \frac{\underline{j}}{2}} 
(q^m)^{\frac{i-j'}{4} - \frac{j'}{2}} 
q^{(\underline{j}+\frac{1}{2})(j'+\frac{1}{2})} 
\qbin{i}{j'}_q 
\prod_{\underline{j}+1 \leq l \leq \underline{i}'}(1 - q^l q^{-m}) \\
&= 
\delta_{i+\underline{j}, \underline{i}'+j'} 
x^{-\frac{i-j'}{4} - \frac{\underline{j}}{2}} 
(q^m)^{\frac{i-j'}{4} - \frac{j'}{2}} 
\sum_{a\geq 0} \frac{(\underline{j}+\frac{1}{2})^a(j'+\frac{1}{2})^a}{a!}\hbar^a\\
&\quad \times
\binom{i}{j'} \sum_{b\geq 0} Q_b(j'+(\underline{i}'-\underline{j}), \underline{i}'-\underline{j}) \hbar^b \cdot
\prod_{\underline{j}+1 \leq l \leq \underline{i}'}\frac{1 - q^l q^{-m}}{1-q^{-m}} \cdot (1-q^{-m})^{\underline{i}'-\underline{j}}\\
&= 
\sum_{d\geq 0} \hbar^d 
\left(
\sum_{\substack{a, b \geq 0 \\ a+b=d}}
\frac{(\underline{j}+\frac{1}{2})^a(\beta \partial_{\beta}+\frac{1}{2})^a}{a!}
Q_b(\beta \partial_{\beta} + \underline{i}'-\underline{j}, \underline{i}'-\underline{j})
\right.
\\
&\quad \times \left.
\prod_{\underline{j}+1 \leq l \leq \underline{i}'}\frac{1 - q^l q^{-m}}{1-q^{-m}}
\right)
\mathsf{R}(\alpha,\beta,\gamma)_{i, \underline{j}}^{\underline{i}',j'} \bigg\vert_{\substack{\alpha = x^{-\frac12} \\ \beta = q^{-\frac{m}{2}} \\ \gamma = x^{-\frac14} q^{\frac{m}{4}} (1-q^{-m})}}
\end{align*}
Further expanding $\prod_{\underline{j}+1 \leq l \leq \underline{i}'}\frac{1 - q^l q^{-m}}{1-q^{-m}}$ into a power series in $\hbar$, we obtain the desired differential operators $\mathsf{D}_d' \in \mathbb{Q}[\beta,\partial_\beta]$ of order $\leq a + b = d$. 
\end{proof}

\section{Review of gluing formulas and overall $q$-power}\label{sec:gluing}
Let $K\subset Y_1$ and $L\subset Y_2$ be knots in integer homology spheres. 
Let $Y_{K,L}$ denote the $3$-manifold obtained by gluing the boundaries of $Y_1\setminus K$ and $Y_2\setminus L$ via
\begin{align*}
\begin{pmatrix}
    \mu_2 \\ \lambda_2
\end{pmatrix} 
&= 
\begin{pmatrix}
    -s & -t \\ p & r
\end{pmatrix}
\begin{pmatrix}
    \mu_1 \\ \lambda_1
\end{pmatrix}, \\
\begin{pmatrix}
    \mu_1 \\ \lambda_1
\end{pmatrix}
&= 
\begin{pmatrix}
-r & -t \\ p & s    
\end{pmatrix}
\begin{pmatrix}
    \mu_2 \\ \lambda_2
\end{pmatrix},
\end{align*}
where $sr-tp = 1$, and $\mu_1, \lambda_1$ are the meridian and the longitude (in the Seifert framing) of $K$, and similarly for $\mu_2, \lambda_2$ and $L$. 
Then, $Y_{K,L}$ has $H_1(Y_{K,L};\mathbb{Z}) \cong \mathbb{Z}/p\mathbb{Z}$, and there are $p$ different choice of spin$^c$-structures. 

In this setup, the conjectural gluing formula for the BPS $q$-series is given by:
\begin{align*}
&\widehat{Z}_{Y_{K,L}, b}(q) \\
&= 
q^{d} \oint \frac{dx}{2\pi i x}\frac{dz}{2\pi i z}
\widehat{Z}_{Y_1\setminus K}(x,q) \widehat{Z}_{Y_2 \setminus L}(z,q)
\sum_{\substack{m,n \in \frac{1}{2} + \mathbb{Z} \\ rm + n \in b + p\mathbb{Z}}}
x^{-m}z^{-n}
q^{-\frac{1}{p}
\left(\begin{smallmatrix}
m & n
\end{smallmatrix}\right)
\left(\begin{smallmatrix}
r & 1 \\
1 & s
\end{smallmatrix}\right)
\left(\begin{smallmatrix}
m \\ 
n
\end{smallmatrix}\right)
} \\
&= 
q^{d}
\sum_{\substack{m,n \in \frac{1}{2} + \mathbb{Z} \\ rm + n \in b + p\mathbb{Z}}}
f_K^{m}(q)f_L^{n}(q)
q^{-\frac{1}{p}
\left(\begin{smallmatrix}
m & n
\end{smallmatrix}\right)
\left(\begin{smallmatrix}
r & 1 \\
1 & s
\end{smallmatrix}\right)
\left(\begin{smallmatrix}
m \\ 
n
\end{smallmatrix}\right)},
\end{align*}
possibly up to an overall sign, 
where $f_K^{m}(q)$ and $f_L^{n}(q)$ are defined by
\[
\widehat{Z}_{Y_1 \setminus K}(x,q) = \sum_{m \in \frac12+\mathbb{Z}} f^m_K(q) x^m,\quad
\widehat{Z}_{Y_2 \setminus L}(z,q) = \sum_{n \in \frac12+\mathbb{Z}} f^n_L(q) z^n,
\]
$b \in (\frac{1+r}{2} + \mathbb{Z})/p\mathbb{Z} \cong \mathrm{Spin}^c(Y_{K,L})$, and
\begin{align*}
d &= \frac{t}{4r} +3\mathrm{sign}(r)\qty(s(p,r)+\frac{\mathrm{sign}(p)}{4}) - \frac{p}{4r},
\end{align*}
where $s(p,r)$ denotes the Dedekind sum.\footnote{Our $d$ is a slightly corrected version of that from the conjectural $\frac{p}{r}$-surgery formula of \cite{GukovManolescu}.
Note, our $d$ is invariant under 
$\left(\begin{smallmatrix}
-s & -t \\
p & r
\end{smallmatrix}\right) \mapsto 
\left(\begin{smallmatrix}
s & t \\
-p & -r
\end{smallmatrix}\right)
$, as it should be, while the one given in \cite{GukovManolescu} isn't. 
} 
See \cite{GuicardiJagadale} for an interpretation of this formula in the TQFT language.
In this formula, $m$ and $n$ denote the choice of spin$^c$-structures (or equivalently, Euler structures) on $Y_1\setminus K$ and $Y_2\setminus L$, and the congruence condition $rm + n \in b + p\mathbb{Z}$ ensures that the two spin$^c$ structures glue together to the fixed spin$^c$ structure of $Y_{K,L}$. 
Note, the congruence condition can be equivalently written as
\[
\begin{pmatrix}
r & 1 \\
1 & s
\end{pmatrix}
\begin{pmatrix}
m \\
n
\end{pmatrix}
\equiv 
\begin{pmatrix}
b \\ 
b'
\end{pmatrix}
\mod p\mathbb{Z},
\]
where 
$b' = s b \in (\frac{1+s}{2} + \mathbb{Z})/p\mathbb{Z}$ and
$b = r b' \in (\frac{1+r}{2} + \mathbb{Z})/p\mathbb{Z}$; 
$b$ and $b'$ are just two different ways of describing the same spin$^c$-structure. 
From this, it is clear that all the powers of $q$ lie in the same coset of $\mathbb{Z}$. 

It is worth noting that this formula is really symmetric under exchanging $K$ and $L$. 
The extra overall factor $q^{d}$ remains the same under the symmetry $r \leftrightarrow s$ as well; this is a consequence of the reciprocity formula for Dedekind sums. 

Finally, when $L$ is the unknot in $S^3$ so that $\widehat{Z}_{S^3 \setminus L}(z,q) = z^{\frac12}-z^{-\frac12}$, this formula reduces to the conjectural $\frac{p}{r}$-surgery formula of Gukov-Manolescu \cite{GukovManolescu}.

\section{Generalized spin$^c$ structures and their twists}\label{sec:G-spin-c}
In Section \ref{subsec:twisted-spin-c}, we defined twisted spin$^c$ structures. 
There should be an analogous story for general semisimple gauge group $G$, where we have a $H^2(Y;Q)$-torsor of ``generalized spin$^c$ structures'' \cite{Park_higher-rank}, which can be twisted by $\alpha \in H_1(Y;Z(G)^\vee) \cong H_1(Y; X/Q)$, where $X$ is the character lattice ($Q \subseteq X \subseteq P$). 
Here, we propose an idea for what kind of geometric object they should be. 

Let $\varphi : SU(2) \rightarrow {}^{L}G_{\mathrm{sc}}$ be the principal embedding\footnote{
Note, in the context of Kapustin-Witten theory, such principal embedding is used to define the Nahm pole boundary condition. 
}, which is unique up to conjugation. 
The image of $-1$ is
\[
\varphi(-1) = [\rho] \in Z({}^{L}G_{\mathrm{sc}}) \cong (P^\vee/Q^\vee)^\vee \cong P/Q, 
\]
where $\rho \in P$ is the Weyl vector. 
The principal embedding descends to $\bar{\varphi} : SO(3) \rightarrow {}^{L}G_{\mathrm{ad}}$, so we can use it to construct the principal ${}^{L}G_{\mathrm{ad}}$-bundle
\[
P_{\mathrm{tan}} := Fr \times_{\bar{\varphi}} {}^{L}G_{\mathrm{ad}}
\]
associated to the frame bundle $Fr$ on $Y$. 

Let $T_Q := \mathfrak{h}^*/Q$ be the compact real torus with period lattice $Q$, so that $\pi_1(T_Q) \cong Q$ (i.e., the maximal torus of ${}^{L}G_{\mathrm{sc}}$). 
Recall that
\[
\pi_1({}^{L}G_{\mathrm{ad}}) \cong (P^\vee/Q^\vee)^\vee \cong P/Q \subset T_Q. 
\]
Define 
\[
{}^{L}\widehat{G}_{\mathrm{ad}} := ({}^{L}G_{\mathrm{sc}} \times T_Q)/\pi_1({}^{L}G_{\mathrm{ad}}),
\]
where the quotient is by the diagonal action. 
Pulling back the central extension
\[
1 \rightarrow T_Q \rightarrow {}^{L}\widehat{G}_{\mathrm{ad}} \rightarrow {}^{L}G_{\mathrm{ad}} \rightarrow 1 
\]
along $\bar{\varphi}$, we get a central extension
\[
1 \rightarrow T_Q \rightarrow \bar{\varphi}^*({}^{L}\widehat{G}_{\mathrm{ad}}) \rightarrow SO(3) \rightarrow 1,
\]
which can be described explicitly by
\[
\mathrm{Spin}^{G}(3) := 
\bar{\varphi}^*({}^{L}\widehat{G}_{\mathrm{ad}}) = (\mathrm{Spin}(3) \times T_Q)/\langle (-1, [\rho])\rangle. 
\]
Since the action of the Weyl group $W$ preserves $P/Q$ (and in particular $[\rho]$), $W$ acts as an automorphism on both ${}^{L}\widehat{G}_{\mathrm{ad}}$ and $\mathrm{Spin}^{G}(3)$. 
When $G=SU(2)$, $\mathrm{Spin}^{G}(3)$ is nothing but $\mathrm{Spin}^c(3)$, with the Weyl group $\mathbb{Z}/2$ acting by the standard spin$^c$ conjugation. 

We can then define a \emph{spin$^{G}$ structure} on $Y$ to be a lift of the principal ${}^{L}G_{\mathrm{ad}}$-bundle $P_{\mathrm{tan}}$ to a ${}^{L}\widehat{G}_{\mathrm{ad}}$-bundle:
\[
\begin{tikzcd}
& & B {}^{L}\widehat{G}_{\mathrm{ad}} \arrow[d] \\
Y \arrow[urr, dashed, "\widetilde{P}_{\mathrm{tan}}"] 
\arrow[r, "Fr", swap]
\arrow[rr, "P_{\mathrm{tan}}", bend right=30, swap]
& BSO(3) \arrow[r, "B\bar{\varphi}", swap] & B{}^{L}G_{\mathrm{ad}}
\end{tikzcd}. 
\]
Writing $\mathrm{Spin}^G(Y)$ for the set of spin$^{G}$ structures on $Y$, it is an $H^2(Y;Q)$-torsor equipped with an action of $W$. 

Recall that principal ${}^{L}G_{\mathrm{ad}}$-bundles on $Y$ are classified (up to isomorphism) by $H^2(Y;\pi_1({}^{L}G_{\mathrm{ad}})) \cong H^2(Y;P/Q)$. 
Given a $1$-cycle $C_\alpha$ representing $\alpha \in H_1(Y;Z(G)^\vee) \cong H_1(Y; X/Q)$, we can use it to twist $P_{\mathrm{tan}}$; as before, we modify $P_{\mathrm{tan}}$ in a tubular neighborhood of $C_\alpha$ using the clutching map which, along the meridian of a 1-cell of $C_\alpha$ colored by $x \in X/Q$, evaluates to $x \in X/Q \subseteq P/Q \cong \pi_1({}^{L}G_{\mathrm{ad}})$. 
Let us call the resulting ${}^{L}G_{\mathrm{ad}}$-bundle $P_{\mathrm{tan}}(C_\alpha)$; its isomorphism class is represented by $\mathrm{PD}(\alpha) \in H^2(Y;P/Q)$. 
To this end, we can define a \emph{$C_\alpha$-twisted spin$^{G}$ structure} to be a lift of $P_{\mathrm{tan}}(C_\alpha)$ to a ${}^{L}\widehat{G}_{\mathrm{ad}}$-bundle. 
As before, if $C_\alpha$ and $C'_\alpha$ are two $1$-cycles representing the same class $\alpha \in H_1(Y;X/Q)$, any $2$-chain $D$ with coefficients in $X/Q$ such that $\partial D = C_\alpha - C'_\alpha$ induces an isomorphism
\[
\Phi_D : \mathrm{Spin}^G(Y, C_\alpha) \cong \mathrm{Spin}^G(Y, C'_\alpha),
\]
which is compatible with composition of the 2-chains. 
If $D_1$ and $D_2$ are both $2$-chains from $C_\alpha$ to $C'_\alpha$, then 
\[
\Phi_{D_1} - \Phi_{D_2} = \beta ([D_1 - D_2]),
\]
where $\beta : H_2(Y;X/Q) \rightarrow H_1(Y;Q)$ is the Bockstein.

\section{Trivalent vertices for Vermas}\label{sec:trivalent-vertices}
\begin{figure}
\centering
\[
\vcenter{\hbox{
\begin{tikzpicture}[
    mid/.style={
        postaction={
            decorate,
            decoration={
                markings,
                mark=at position 0.5 with {\arrow{>}} 
            }
        }
    },
    midback/.style={
        postaction={
            decorate,
            decoration={
                markings,
                mark=at position 0.5 with {\arrow{<}} 
            }
        }
    },
]
\draw[blue, thick, mid] (0, -1) -- (0, 0);
\draw[blue, thick, mid] (0, 0) -- ({sqrt(3)/2}, 1/2);
\draw[blue, thick, mid] (0, 0) -- (-{sqrt(3)/2}, 1/2);
\node[blue, below] at (0, -1){$a$};
\node[blue, left] at ({-sqrt(3)/2}, 1/2){$b$};
\node[blue, right] at ({sqrt(3)/2}, 1/2){$c$};
\node[blue, right] at (0, -1/2){$V_{\infty}((xy)_k)$};
\node[blue, above] at ({-sqrt(3)/2}, 1/2){$V_{\infty}(x)$};
\node[blue, above] at ({sqrt(3)/2}, 1/2){$V_{\infty}(y)$};
\end{tikzpicture}
}}
\qquad
\vcenter{\hbox{
\begin{tikzpicture}[
    mid/.style={
        postaction={
            decorate,
            decoration={
                markings,
                mark=at position 0.5 with {\arrow{>}} 
            }
        }
    },
    midback/.style={
        postaction={
            decorate,
            decoration={
                markings,
                mark=at position 0.5 with {\arrow{<}} 
            }
        }
    },
]
\draw[blue, thick, mid] (0, 0) -- (0, 1);
\draw[blue, thick, midback] (0, 0) -- ({sqrt(3)/2}, -1/2);
\draw[blue, thick, midback] (0, 0) -- (-{sqrt(3)/2}, -1/2);
\node[blue, above] at (0, 1){$c$};
\node[blue, left] at ({-sqrt(3)/2}, -1/2){$a$};
\node[blue, right] at ({sqrt(3)/2}, -1/2){$b$};
\node[blue, right] at (0, 1/2){$V_{\infty}((xy)_k)$};
\node[blue, below] at ({-sqrt(3)/2}, -1/2){$V_{\infty}(x)$};
\node[blue, below] at ({sqrt(3)/2}, -1/2){$V_{\infty}(y)$};
\end{tikzpicture}
}}
\]
\caption{$Y(\protect\substack{x,y\\(xy)_k})_{a}^{b,c}$ and $\dualY (\protect\substack{(xy)_k\\x,y})_{a,b}^{c}$}
\label{fig:trivalent_junctions}
\end{figure}
Here we summarize the trivalent vertices (i.e. fusion rules, a.k.a. $q$-$3j$ symbols) for Verma modules that will be used in Section \ref{sec:surgery}. 
For any integer $k$, let
\[
(xy)_k := q^{-1-2k}xy.
\]
Then, we have the following direct sum decomposition of tensor product of highest weight Verma modules:
\[
V_{\infty}(x) \otimes V_{\infty}(y) \cong \bigoplus_{k\geq 0} V_\infty((xy)_k).
\]
The inclusion of each direct summand $V_\infty((xy)_k)$ and the projection to it define maps
\[
\iota_k : V_{\infty}((xy)_k) \rightarrow V_{\infty}(x)\otimes V_{\infty}(y)
\quad\text{and}\quad
p_k : V_{\infty}(x)\otimes V_{\infty}(y) \rightarrow V_{\infty}((xy)_k)
\]
satisfying $\sum_{k\geq 0} \iota_k \circ p_k = \mathrm{Id}$. 
We would like to draw these maps diagrammatically as trivalent junctions (see Figure \ref{fig:trivalent_junctions}), and for that reason, it turns out to be more natural to use slightly rescaled maps
\begin{align*}
&Y(\substack{x,y\\(xy)_k}) : V_{\infty}((xy)_k) \rightarrow V_{\infty}(x)\otimes V_{\infty}(y) \text{ and} \\
&\dualY(\substack{(xy)_k\\ x,y}) : V_{\infty}(x)\otimes V_{\infty}(y) \rightarrow V_{\infty}((xy)_k),
\end{align*}
so that they satisfy
\[
\dualY(\substack{(xy)_k\\ x,y})_{a,b}^{c} := q^{\frac{b}{2}+\frac{1}{4}}y^{-\frac{1}{4}}Y(\substack{(xy)_k,y^{-1}\\x})_{a}^{c,-1-b} =  -q^{-\frac{a}{2}-\frac{1}{4}}x^{\frac{1}{4}}Y(\substack{x^{-1},(xy)_k\\y})_{b}^{-1-a,c}, 
\]
which can be interpreted graphically as in Figure \ref{fig:turning-Y-junction}. 
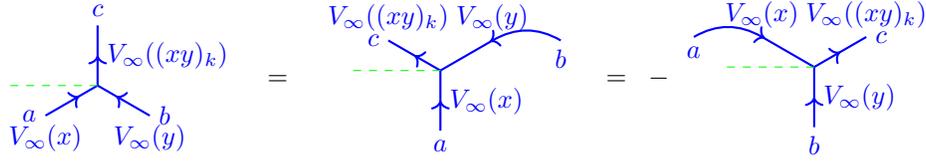
\begin{figure}
\centering
\[
\vcenter{\hbox{
\begin{tikzpicture}[scale=0.8, 
    mid/.style={
        postaction={
            decorate,
            decoration={
                markings,
                mark=at position 0.5 with {\arrow{>}} 
            }
        }
    },
    midback/.style={
        postaction={
            decorate,
            decoration={
                markings,
                mark=at position 0.5 with {\arrow{<}} 
            }
        }
    },
]
\draw[blue, thick, mid] (0, 0) -- (0, 1);
\draw[blue, thick, midback] (0, 0) -- ({sqrt(3)/2}, -1/2);
\draw[blue, thick, midback] (0, 0) -- (-{sqrt(3)/2}, -1/2);
\node[blue, above] at (0, 1){$c$};
\node[blue, left] at ({-sqrt(3)/2}, -1/2){$a$};
\node[blue, right] at ({sqrt(3)/2}, -1/2){$b$};
\node[blue, right] at (0, 1/2){$V_{\infty}((xy)_k)$};
\node[blue, below] at ({-sqrt(3)/2}, -1/2){$V_{\infty}(x)$};
\node[blue, below] at ({sqrt(3)/2}, -1/2){$V_{\infty}(y)$}; 
\draw[green, dashed] (0, 0) -- (-3/2, 0);
\end{tikzpicture}
}}
\;\;=\;\;
\vcenter{\hbox{
\begin{tikzpicture}[scale=0.8, 
    mid/.style={
        postaction={
            decorate,
            decoration={
                markings,
                mark=at position 0.5 with {\arrow{>}} 
            }
        }
    },
    midback/.style={
        postaction={
            decorate,
            decoration={
                markings,
                mark=at position 0.5 with {\arrow{<}} 
            }
        }
    },
]
\draw[blue, thick, mid] (0, -1) -- (0, 0);
\draw[blue, thick, midback] (0, 0) -- ({sqrt(3)/2}, 1/2) to[out=30, in=150] ({2}, 1/2);
\draw[blue, thick, mid] (0, 0) -- (-{sqrt(3)/2}, 1/2);
\node[blue, below] at (0, -1){$a$};
\node[blue, left] at ({-sqrt(3)/2}, 1/2){$c$};
\node[blue, below] at ({2}, 1/2){$b$};
\node[blue, right] at (0, -1/2){$V_{\infty}(x)$};
\node[blue, above] at ({-sqrt(3)/2}, 1/2){$V_{\infty}((xy)_k)$};
\node[blue, above] at ({sqrt(3)/2}, 1/2){$V_{\infty}(y)$};
\draw[green, dashed] (0, 0) -- (-3/2, 0);
\end{tikzpicture}
}}
\;\;=\;\;
-
\vcenter{\hbox{
\begin{tikzpicture}[scale=0.8, 
    mid/.style={
        postaction={
            decorate,
            decoration={
                markings,
                mark=at position 0.5 with {\arrow{>}} 
            }
        }
    },
    midback/.style={
        postaction={
            decorate,
            decoration={
                markings,
                mark=at position 0.5 with {\arrow{<}} 
            }
        }
    },
]
\draw[blue, thick, mid] (0, -1) -- (0, 0);
\draw[blue, thick, mid] (0, 0) -- ({sqrt(3)/2}, 1/2);
\draw[blue, thick, midback] (0, 0) -- (-{sqrt(3)/2}, 1/2) to[out=150, in=30] ({-2}, 1/2);
\node[blue, below] at (0, -1){$b$};
\node[blue, below] at ({-2}, 1/2){$a$};
\node[blue, right] at ({sqrt(3)/2}, 1/2){$c$};
\node[blue, right] at (0, -1/2){$V_{\infty}(y)$};
\node[blue, above] at ({-sqrt(3)/2}, 1/2){$V_{\infty}(x)$};
\node[blue, above] at ({sqrt(3)/2}, 1/2){$V_{\infty}((xy)_k)$};
\draw[green, dashed] (0, 0) -- (-3/2, 0);
\end{tikzpicture}
}}
\]
\caption{$\dualY$-junction by turning the $Y$-junction. An extra $-$-sign is introduced when a strand of a junction is turned across the negative $x$-axis (the green dashed line).}
\label{fig:turning-Y-junction}
\end{figure}
Without the rescaling, there would be some extra factors; see, e.g., \cite[Fig. 22]{Solovyev}. 
Explicitly, the $Y$-junction weight is given by
\begin{align*}
Y(\substack{x,y\\(xy)_k})_{a}^{b,c} &= 
\delta_{b+c,a+k} \sum_{\substack{m_b, m_c \in \mathbb{Z},\\ m_b+m_c = k}} (-1)^{m_c} q^{\star_q}
x^{\star_x} y^{\star_y}
\qbin{a}{c-m_c}_q\qbin{k}{m_c}_q \\
&\quad\times
\frac{(q^{-b}x; q)_{b-m_b}(q^{-c}y; q)_{c-m_c}}{(q^{-a} (xy)_{k}; q)_{b+c+1}},
\end{align*}
where
\begin{align*}
\star_q &= \qty(-\frac{(a+k)(2k+1)}{2} + \frac{c(2m_b+1) + m_b(m_b+1)}{4} - \frac{b(2m_c+1) + m_c(m_c+1)}{4})\\
&\quad - \frac{(b-m_b)(c-m_c)}{2} - \frac{m_b m_c}{2} \\
&\quad -\frac{(-b-m_b-1)(b-m_b)}{4}-\frac{(-c-m_c-1)(c-m_c)}{4} + \frac{(-a+k)(b+c+1)}{4}\\
&= 
\frac{(c+m_b)(c+m_b+1)-(k+c+1)(b+c)-bc}{2}-\frac{a}{4}-\frac{k(2k+1)}{4}, \\
\star_x &= \qty(-\frac{a-b+2m_b}{4}) - \frac{b-m_b}{2} + \frac{b+c+1}{2}\\
&= \frac{-a+b+2c}{4}+\frac{1}{2}, \\
\star_y &= \qty(\frac{a-c+2m_c}{4}) - \frac{c-m_c}{2} + \frac{b+c+1}{2} \\
&= \frac{a+2b-c}{4}+m_c+\frac{1}{2}.
\end{align*}
The sum above is a finite sum, since the range of summation can be restricted to $0\leq m_b \leq b$ when $b\geq 0$ and to $0\leq m_c \leq c$ when $c\geq 0$. 

Below, we summarize the main properties of these trivalent vertices. 
\begin{enumerate}
\item Rescaled version of $\sum_{k\geq 0} \iota_k \circ p_k = \mathrm{Id}$:
\begin{equation}\label{eq:fusion}
\sum_{0\leq k\leq w}-((xy)_k^{\frac12}-(xy)_k^{-\frac12})\dualY(\protect\substack{(xy)_k\\x,y})_{a,w-a}^{w-k} Y(\protect\substack{x,y\\(xy)_k})_{w-k}^{a',w-a'} = \delta_{a,a'}.
\end{equation}
For its graphical interpretation, see Figure \ref{fig:fusion}. 
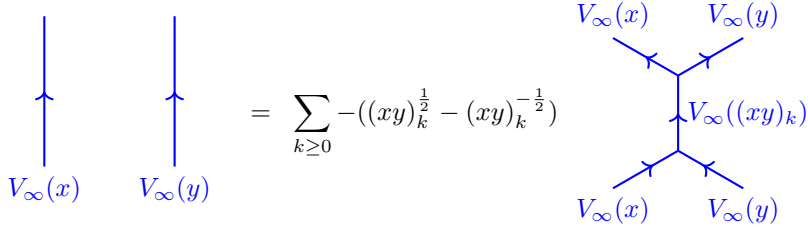
\begin{figure}
\centering
\[
\vcenter{\hbox{
\begin{tikzpicture}[
    mid/.style={
        postaction={
            decorate,
            decoration={
                markings,
                mark=at position 0.5 with {\arrow{>}} 
            }
        }
    },
    midback/.style={
        postaction={
            decorate,
            decoration={
                markings,
                mark=at position 0.5 with {\arrow{<}} 
            }
        }
    },
]
\draw[blue, thick, mid] ({-sqrt(3)/2}, -1/2) -- ({-sqrt(3)/2}, 3/2);
\draw[blue, thick, mid] ({sqrt(3)/2}, -1/2) -- ({sqrt(3)/2}, 3/2);
\node[blue, below] at ({-sqrt(3)/2}, -1/2){$V_{\infty}(x)$};
\node[blue, below] at ({sqrt(3)/2}, -1/2){$V_{\infty}(y)$}; 
\end{tikzpicture}
}}
\;\;=\;\;
\sum_{k\geq 0}
-((xy)_k^{\frac12}-(xy)_k^{-\frac12})
\vcenter{\hbox{
\begin{tikzpicture}[
    mid/.style={
        postaction={
            decorate,
            decoration={
                markings,
                mark=at position 0.5 with {\arrow{>}} 
            }
        }
    },
    midback/.style={
        postaction={
            decorate,
            decoration={
                markings,
                mark=at position 0.5 with {\arrow{<}} 
            }
        }
    },
]
\draw[blue, thick, mid] (0, 0) -- (0, 1);
\draw[blue, thick, midback] (0, 0) -- ({sqrt(3)/2}, -1/2);
\draw[blue, thick, midback] (0, 0) -- (-{sqrt(3)/2}, -1/2);
\draw[blue, thick, mid] (0, 1) -- ({sqrt(3)/2}, 3/2);
\draw[blue, thick, mid] (0, 1) -- (-{sqrt(3)/2}, 3/2);
\node[blue, right] at (0, 1/2){$V_{\infty}((xy)_k)$};
\node[blue, below] at ({-sqrt(3)/2}, -1/2){$V_{\infty}(x)$};
\node[blue, below] at ({sqrt(3)/2}, -1/2){$V_{\infty}(y)$}; 
\node[blue, above] at ({-sqrt(3)/2}, 3/2){$V_{\infty}(x)$};
\node[blue, above] at ({sqrt(3)/2}, 3/2){$V_{\infty}(y)$}; 
\end{tikzpicture}
}}
\]
\caption{Fusion of two strands}
\label{fig:fusion}
\end{figure}
\item 3-fold rotation symmetry: 
\begin{align*}
Y(\substack{x,y\\(xy)_k})_{-1-a}^{-1-b,c} &= -q^{\frac{b-a}{2}}x^{\frac14} (xy)_k^{-\frac14}Y(\substack{y,(xy)_k^{-1}\\x^{-1}})_{b}^{c,a},\label{eq:Y3fold}\\
Y(\substack{x,y\\(xy)_k})_{-1-a}^{b,-1-c} &= -q^{\frac{a-c}{2}} y^{-\frac14} (xy)_k^{\frac14}Y(\substack{(xy)_k^{-1},x\\y^{-1}})_{c}^{a,b}. 
\end{align*}
See Figure \ref{fig:3-fold-rotation-symmetry} for the graphical interpretation of the first identity; it's similar for the second identity. 
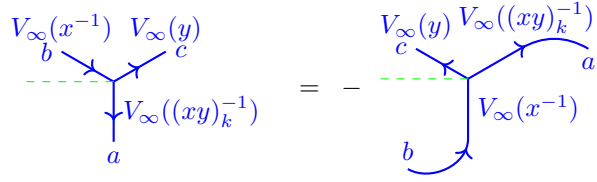
\begin{figure}
\centering
\[
\vcenter{\hbox{
\begin{tikzpicture}[scale=0.8, 
    mid/.style={
        postaction={
            decorate,
            decoration={
                markings,
                mark=at position 0.5 with {\arrow{>}} 
            }
        }
    },
    midback/.style={
        postaction={
            decorate,
            decoration={
                markings,
                mark=at position 0.5 with {\arrow{<}} 
            }
        }
    },
]
\draw[blue, thick, midback] (0, -1) -- (0, 0);
\draw[blue, thick, mid] (0, 0) -- ({sqrt(3)/2}, 1/2);
\draw[blue, thick, midback] (0, 0) -- (-{sqrt(3)/2}, 1/2);
\node[blue, below] at (0, -1){$a$};
\node[blue, left] at ({-sqrt(3)/2}, 1/2){$b$};
\node[blue, right] at ({sqrt(3)/2}, 1/2){$c$};
\node[blue, right] at (0, -1/2){$V_{\infty}((xy)_k^{-1})$};
\node[blue, above] at ({-sqrt(3)/2}, 1/2){$V_{\infty}(x^{-1})$};
\node[blue, above] at ({sqrt(3)/2}, 1/2){$V_{\infty}(y)$};
\draw[green, dashed] (0, 0) -- (-3/2, 0);
\end{tikzpicture}
}}
\;\;=\;\;
-
\vcenter{\hbox{
\begin{tikzpicture}[scale=0.8, 
    mid/.style={
        postaction={
            decorate,
            decoration={
                markings,
                mark=at position 0.5 with {\arrow{>}} 
            }
        }
    },
    midback/.style={
        postaction={
            decorate,
            decoration={
                markings,
                mark=at position 0.5 with {\arrow{<}} 
            }
        }
    },
]
\draw[blue, thick, mid] (-1, -3/2) to[out=-30, in=-90] (0, -1) -- (0, 0);
\draw[blue, thick, mid] (0, 0) -- ({sqrt(3)/2}, 1/2) to[out=30, in=150] ({2}, 1/2);
\draw[blue, thick, mid] (0, 0) -- (-{sqrt(3)/2}, 1/2);
\node[blue, above] at (-1, -3/2){$b$};
\node[blue, left] at ({-sqrt(3)/2}, 1/2){$c$};
\node[blue, below] at ({2}, 1/2){$a$};
\node[blue, right] at (0, -1/2){$V_{\infty}(x^{-1})$};
\node[blue, above] at ({-sqrt(3)/2}, 1/2){$V_{\infty}(y)$};
\node[blue, above] at ({sqrt(3)/2+0.1}, 1/2+0.1){$V_{\infty}((xy)_k^{-1})$};
\draw[green, dashed] (0, 0) -- (-3/2, 0);
\end{tikzpicture}
}}
\]
\caption{$3$-fold rotation symmetry of $Y$}
\label{fig:3-fold-rotation-symmetry}
\end{figure}
\item The trivalent junctions solve the eigenvalue equation for the $R$-matrices: 
\begin{equation}\label{eq:eigen-value-eq}
\sum_{b,c\geq 0}R(x,y)_{b,c}^{b',c'}Y(\substack{x,y\\(xy)_k})_{a}^{b,c} = (-1)^k q^{\frac{k(k+1)}{2}+\frac14} (xy)^{-\frac{1+2k}{4}} \;Y(\substack{y,x\\(xy)_k})_{a}^{b',c'}. 
\end{equation}
See Figure \ref{fig:R-matrix-eigen-equation}. 
\begin{figure}
\centering
\[
\vcenter{\hbox{
\begin{tikzpicture}[scale=0.8, 
    mid/.style={
        postaction={
            decorate,
            decoration={
                markings,
                mark=at position 0.5 with {\arrow{>}} 
            }
        }
    },
    midback/.style={
        postaction={
            decorate,
            decoration={
                markings,
                mark=at position 0.5 with {\arrow{<}} 
            }
        }
    },
]
\draw[blue, thick, mid] (0, 0) to[out=30, in=-90] ({3/4}, 1/2) to[out=90, in=-90] ({-3/4}, 3/2);
\draw[white, line width=5] (0, 0) to[out=150, in=-90] (-{3/4}, 1/2) to[out=90, in=-90] ({3/4}, 3/2);
\draw[blue, thick, mid] (0, 0) to[out=150, in=-90] (-{3/4}, 1/2) to[out=90, in=-90] ({3/4}, 3/2);
\draw[blue, thick, mid] (0, -1) -- (0, 0);
\node[blue, right] at (0, -1/2){$V_{\infty}((xy)_k)$};
\node[blue, left] at ({-3/4}, 1/2){$V_{\infty}(x)$};
\node[blue, right] at ({3/4}, 1/2){$V_{\infty}(y)$};
\end{tikzpicture}
}}
\;\;=\;\;
(-1)^k q^{\frac{k(k+1)}{2}+\frac14} (xy)^{-\frac{1+2k}{4}}
\vcenter{\hbox{
\begin{tikzpicture}[scale=0.8, 
    mid/.style={
        postaction={
            decorate,
            decoration={
                markings,
                mark=at position 0.5 with {\arrow{>}} 
            }
        }
    },
    midback/.style={
        postaction={
            decorate,
            decoration={
                markings,
                mark=at position 0.5 with {\arrow{<}} 
            }
        }
    },
]
\draw[blue, thick, mid] (0, -1) -- (0, 0);
\draw[blue, thick, mid] (0, 0) -- ({sqrt(3)/2}, 1/2);
\draw[blue, thick, mid] (0, 0) -- (-{sqrt(3)/2}, 1/2);
\node[blue, right] at (0, -1/2){$V_{\infty}((xy)_k)$};
\node[blue, above] at ({-sqrt(3)/2}, 1/2){$V_{\infty}(y)$};
\node[blue, above] at ({sqrt(3)/2}, 1/2){$V_{\infty}(x)$};
\end{tikzpicture}
}}
\]
\caption{Eigenvalue equation for the $R$-matrices}
\label{fig:R-matrix-eigen-equation}
\end{figure}
\item Sliding a trivalent junction over (or under) another strand:
\begin{align}\label{eq:sliding}
&\sum_{m\geq 0} \qty(x^{-\frac{1+2k}{4}} R(q^{-1-2k}yz,x)_{a,b}^{c,m})
Y(\substack{y,z\\(yz)_k})_{m}^{d,e} \\
&= 
\sum_{m,n,l\geq 0}
Y(\substack{y,z\\(yz)_k})_{a}^{m,n}
R(z,x)_{n,b}^{l,e} R(y,x)_{m,l}^{c,d} 
. \nonumber
\end{align}
See Figure \ref{fig:sliding-a-junction}.\footnote{
Recall from Table \ref{tab:R-matrices} that, for the crossings among Verma modules $V_\infty(x_1)$ and $V_\infty(x_2)$ where $x_1, x_2$ are generic (so that $x_2 \not\in q^{\mathbb{Z}} x_1$), we omit the exponential prefactor $e^{\frac{\log x_1 \log x_2}{4\log q}}$ from the $R$-matrix. 
More generally, we do the same for crossings among $V_\infty(q^{n_1} x_1)$ and $V_\infty(q^{n_2} x_2)$, so the $R$-matrix associated to their crossing is
\[
q^{\frac{n_1 n_2}{4}} x_1^{\frac{n_2}{4}} x_2^{\frac{n_1}{4}} R(q^{n_1} x_1, q^{n_2} x_2,q), 
\]
and hence the extra factor $x^{-\frac{1+2k}{4}}$ in \eqref{eq:sliding}. 
For crossings among $V_\infty(q^{n_1} x)$ and $V_\infty(q^{n_2} x)$ with $x$ generic, we omit the exponential prefactor $e^{\frac{(\log x)^2 - \log q}{4\log q}}$, so the $R$-matrix associated to their crossing is
\[
q^{\frac{n_1 n_2+1}{4}} x^{\frac{n_1+n_2}{4}} R(q^{n_1} x, q^{n_2} x,q).
\]
} 
\begin{figure}
\centering
\[
\vcenter{\hbox{
\begin{tikzpicture}[scale=0.8, 
    mid/.style={
        postaction={
            decorate,
            decoration={
                markings,
                mark=at position 0.5 with {\arrow{>}} 
            }
        }
    },
    midback/.style={
        postaction={
            decorate,
            decoration={
                markings,
                mark=at position 0.5 with {\arrow{<}} 
            }
        }
    },
]
\draw[blue, thick, mid] (3/2, -1) to[out=90, in=-90] (-9/4, 1/2); 
\draw[blue, thick] (-9/4, 1/2) -- (-9/4, 1);
\draw[white, line width=5] (0, -1) -- (0, 0);
\draw[blue, thick, mid] (0, -1) -- (0, 0);
\draw[blue, thick, mid] (0, 0) to[out=30, in=-90] ({3/4}, 1);
\draw[blue, thick, mid] (0, 0) to[out=150, in=-90] (-{3/4}, 1);
\node[blue, below] at (0, -1){$V_{\infty}((yz)_k)$};
\node[blue, above] at ({-9/4}, 1){$V_{\infty}(x)$};
\node[blue, above] at ({-3/4}, 1){$V_{\infty}(y)$};
\node[blue, above] at ({3/4}, 1){$V_{\infty}(z)$};
\end{tikzpicture}
}}
\;\;=\;\;
\vcenter{\hbox{
\begin{tikzpicture}[scale=0.8, 
    mid/.style={
        postaction={
            decorate,
            decoration={
                markings,
                mark=at position 0.5 with {\arrow{>}} 
            }
        }
    },
    midback/.style={
        postaction={
            decorate,
            decoration={
                markings,
                mark=at position 0.5 with {\arrow{<}} 
            }
        }
    },
]
\draw[blue, thick, mid] (3/2, -1) -- (3/2, -1/2); 
\draw[blue, thick] (3/2, -1/2) to[out=90, in=-90] (-9/4, 1); 
\draw[blue, thick, mid] (0, -1) -- (0, 0); 
\draw[white, line width=5] (0, 0) to[out=30, in=-90] ({3/4}, 1); 
\draw[white, line width=5] (0, 0) to[out=150, in=-90] (-{3/4}, 1); 
\draw[blue, thick, mid] (0, 0) to[out=30, in=-90] ({3/4}, 1); 
\draw[blue, thick, mid] (0, 0) to[out=150, in=-90] (-{3/4}, 1); 
\node[blue, below] at (0, -1){$V_{\infty}((yz)_k)$};
\node[blue, above] at ({-9/4}, 1){$V_{\infty}(x)$};
\node[blue, above] at ({-3/4}, 1){$V_{\infty}(y)$};
\node[blue, above] at ({3/4}, 1){$V_{\infty}(z)$};
\end{tikzpicture}
}}
\]
\caption{Sliding a trivalent junction over another strand}
\label{fig:sliding-a-junction}
\end{figure}
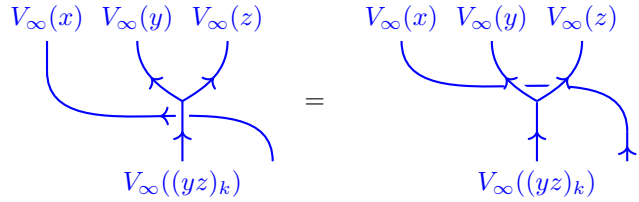
\end{enumerate}

\begin{rmk}\label{rmk:inverted-sliding}
One can invert some indices from the sliding relation \eqref{eq:sliding}. 
For instance, 
\begin{align*}
&\sum_{m\geq 0} 
R(x,y)_{-1-a,b}^{-1-c,m} 
Y(\substack{q^N, q^{-N+1+2k}x\\x})_{m}^{d,e}
\\
&=
\sum_{m,n,l \geq 0} 
Y(\substack{q^N, q^{-N+1+2k}x\\x})_{-1-a}^{m, -1-n} 
\qty(y^{\frac{-N+1+2k}{4}} R(q^{-N+1+2k}x,y)_{-1-n,b}^{-1-l,e})
\\
&\qquad\qquad \times
\qty(y^{\frac{N}{4}} R(q^N,y)_{m,-1-l}^{-1-c,d}),
\end{align*}
which can be graphically interpreted as Figure \ref{fig:sliding-across-inverted-crossing}. 
Likewise, a trivalent junction can slide over (or under) any inverted crossing. 
\begin{figure}
\centering
\[
\vcenter{\hbox{
\begin{tikzpicture}[scale=0.8, 
    mid/.style={
        postaction={
            decorate,
            decoration={
                markings,
                mark=at position 0.5 with {\arrow{>}} 
            }
        }
    },
    midback/.style={
        postaction={
            decorate,
            decoration={
                markings,
                mark=at position 0.5 with {\arrow{<}} 
            }
        }
    },
]
\draw[blue, thick, mid] (1, -1) -- (0, 0);
\draw[blue, thick, midback] (0, 0) -- (-1, 1);
\draw[white, line width=5] (-1, -1) -- (1, 1);
\draw[blue, thick, midback] (-1, -1) -- (0, 0);
\draw[blue, thick, mid] (0, 0) -- (1, 1);
\draw[red] (0.7, 1.3) to[out=-135, in=90] (0.3, 0.3);
\node[blue, below] at (1, -1){$V_\infty(y)$};
\node[blue, below] at (-1, -1){$V_\infty(x^{-1})$};
\node[blue, above] at (-1, 1){$V_\infty(y^{-1})$};
\node[blue, above right] at (1, 1){$V_\infty(q^{-N+1+2k}x)$};
\node[red, above] at (0.7, 1.3){$V_N$};
\end{tikzpicture}
}}
\;\;=\;\;
\vcenter{\hbox{
\begin{tikzpicture}[scale=0.8, 
    mid/.style={
        postaction={
            decorate,
            decoration={
                markings,
                mark=at position 0.5 with {\arrow{>}} 
            }
        }
    },
    midback/.style={
        postaction={
            decorate,
            decoration={
                markings,
                mark=at position 0.5 with {\arrow{<}} 
            }
        }
    },
]
\draw[blue, thick, mid] (1, -1) -- (0, 0);
\draw[blue, thick, midback] (0, 0) -- (-1, 1);
\draw[white, line width=5] (-1, -1) -- (1, 1);
\draw[white, line width=5] (0.7, 1.3) to[out=-135, in=90] (-0.3, -0.3);
\draw[blue, thick, midback] (-1, -1) -- (0, 0);
\draw[blue, thick, mid] (0, 0) -- (1, 1);
\draw[red] (0.7, 1.3) to[out=-135, in=90] (-0.3, -0.3);
\node[blue, below] at (1, -1){$V_\infty(y)$};
\node[blue, below] at (-1, -1){$V_\infty(x^{-1})$};
\node[blue, above] at (-1, 1){$V_\infty(y^{-1})$};
\node[blue, above right] at (1, 1){$V_\infty(q^{-N+1+2k}x)$};
\node[red, above] at (0.7, 1.3){$V_N$};
\end{tikzpicture}
}}
\]
\caption{Sliding a junction across an inverted crossing}
\label{fig:sliding-across-inverted-crossing}
\end{figure}
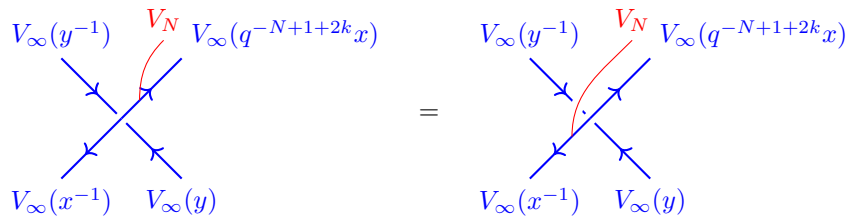
\end{rmk}

\bibliographystyle{amsplain}
\bibliography{ref}

@article {GukovPeiPutrovVafa,
    AUTHOR = {Gukov, Sergei and Pei, Du and Putrov, Pavel and Vafa, Cumrun},
     TITLE = {B{PS} spectra and 3-manifold invariants},
   JOURNAL = {J. Knot Theory Ramifications},
  FJOURNAL = {Journal of Knot Theory and its Ramifications},
    VOLUME = {29},
      YEAR = {2020},
    NUMBER = {2},
     PAGES = {2040003, 85},
      ISSN = {0218-2165,1793-6527},
   MRCLASS = {57K31},
  MRNUMBER = {4089709},
MRREVIEWER = {Hitoshi\ Murakami},
       DOI = {10.1142/S0218216520400039},
       URL = {https://doi.org/10.1142/S0218216520400039},
}

@article {GukovManolescu,
    AUTHOR = {Gukov, Sergei and Manolescu, Ciprian},
     TITLE = {A two-variable series for knot complements},
   JOURNAL = {Quantum Topol.},
  FJOURNAL = {Quantum Topology},
    VOLUME = {12},
      YEAR = {2021},
    NUMBER = {1},
     PAGES = {1--109},
      ISSN = {1663-487X},
   MRCLASS = {57K16 (57K14 57R56)},
  MRNUMBER = {4233201},
       DOI = {10.4171/qt/145},
       URL = {https://doi-org.proxy01.its.virginia.edu/10.4171/qt/145},
}

@misc{CCKPS_3d-modularity-revisited,
    title={3d Modularity Revisited}, 
    author={Miranda C. N. Cheng and Ioana Coman and Piotr Kucharski and Davide Passaro and Gabriele Sgroi},
    year={2024},
    eprint={2403.14920},
    archivePrefix={arXiv},
    primaryClass={hep-th},
    url={https://arxiv.org/abs/2403.14920}, 
    note={https://arxiv.org/abs/2403.14920}, 
}

@incollection {Jordan,
    AUTHOR = {Jordan, David},
     TITLE = {Langlands duality for skein modules of 3-manifolds},
 BOOKTITLE = {String-{M}ath 2022},
    SERIES = {Proc. Sympos. Pure Math.},
    VOLUME = {107},
     PAGES = {127--149},
 PUBLISHER = {Amer. Math. Soc., Providence, RI},
      YEAR = {[2024] \copyright 2024},
      ISBN = {[9781470472405]; [9781470476755]},
   MRCLASS = {57K31 (81T45)},
  MRNUMBER = {4729628},
}

@article {Rozansky,
    AUTHOR = {Rozansky, Lev},
     TITLE = {The universal {$R$}-matrix, {B}urau representation, and the
              {M}elvin-{M}orton expansion of the colored {J}ones polynomial},
   JOURNAL = {Adv. Math.},
  FJOURNAL = {Advances in Mathematics},
    VOLUME = {134},
      YEAR = {1998},
    NUMBER = {1},
     PAGES = {1--31},
      ISSN = {0001-8708,1090-2082},
   MRCLASS = {57M25 (20F36)},
  MRNUMBER = {1612375},
MRREVIEWER = {Hugh\ Reynolds\ Morton},
       DOI = {10.1006/aima.1997.1661},
       URL = {https://doi.org/10.1006/aima.1997.1661},
}

@incollection {LinWang,
    AUTHOR = {Lin, Xiao-Song and Wang, Zhenghan},
     TITLE = {Random walk on knot diagrams, colored {J}ones polynomial and
              {I}hara-{S}elberg zeta function},
 BOOKTITLE = {Knots, braids, and mapping class groups---papers dedicated to
              {J}oan {S}. {B}irman ({N}ew {Y}ork, 1998)},
    SERIES = {AMS/IP Stud. Adv. Math.},
    VOLUME = {24},
     PAGES = {107--121},
 PUBLISHER = {Amer. Math. Soc., Providence, RI},
      YEAR = {2001},
      ISBN = {0-8218-2966-1},
   MRCLASS = {57M27 (11M36 60G50)},
  MRNUMBER = {1873112},
       DOI = {10.1090/amsip/024/09},
       URL = {https://doi-org.ezp-prod1.hul.harvard.edu/10.1090/amsip/024/09},
}

@article {BullockLoFaro,
    AUTHOR = {Bullock, Doug and Lo Faro, Walter},
     TITLE = {The {K}auffman bracket skein module of a twist knot exterior},
   JOURNAL = {Algebr. Geom. Topol.},
  FJOURNAL = {Algebraic \& Geometric Topology},
    VOLUME = {5},
      YEAR = {2005},
     PAGES = {107--118},
      ISSN = {1472-2747,1472-2739},
   MRCLASS = {57M27},
  MRNUMBER = {2135547},
MRREVIEWER = {Richard\ John\ Hadji},
       DOI = {10.2140/agt.2005.5.107},
       URL = {https://doi.org/10.2140/agt.2005.5.107},
}

@article {GK,
    AUTHOR = {Garoufalidis, Stavros and Kashaev, Rinat},
     TITLE = {From state integrals to {$q$}-series},
   JOURNAL = {Math. Res. Lett.},
  FJOURNAL = {Mathematical Research Letters},
    VOLUME = {24},
      YEAR = {2017},
    NUMBER = {3},
     PAGES = {781--801},
      ISSN = {1073-2780,1945-001X},
   MRCLASS = {57M27 (33F10 39A13)},
  MRNUMBER = {3696603},
MRREVIEWER = {Kazuhiro\ Hikami},
       DOI = {10.4310/MRL.2017.v24.n3.a8},
       URL = {https://doi.org/10.4310/MRL.2017.v24.n3.a8},
}

@misc {Wheeler-thesis,
    author={Wheeler, Campbell},
    title="{Modular $q$–difference equations and quantum invariants of hyperbolic three–manifolds (Thesis)}",
    year={2023},
    note={\url{https://www.ihes.fr/~wheeler/files/thesis.pdf}},
}

@article {GGMW,
    AUTHOR = {Garoufalidis, Stavros and Gu, Jie and Mari\~no, Marcos and
              Wheeler, Campbell},
     TITLE = {Resurgence of {C}hern-{S}imons theory at the trivial flat
              connection},
   JOURNAL = {Comm. Math. Phys.},
  FJOURNAL = {Communications in Mathematical Physics},
    VOLUME = {406},
      YEAR = {2025},
    NUMBER = {1},
     PAGES = {Paper No. 20, 60},
      ISSN = {0010-3616,1432-0916},
   MRCLASS = {81T45 (53C05 57K31)},
  MRNUMBER = {4841756},
       DOI = {10.1007/s00220-024-05149-6},
       URL = {https://doi.org/10.1007/s00220-024-05149-6},
}

@misc{GaroufalidisLe,
    title={From 3-dimensional skein theory to functions near Q}, 
    author={Stavros Garoufalidis and Thang T. T. Q. Le},
    year={2023},
    eprint={2307.09135},
    archivePrefix={arXiv},
    primaryClass={math.GT},
    url={https://arxiv.org/abs/2307.09135}, 
    note={https://arxiv.org/abs/2307.09135}
}

@misc{Park_inverted,
    title={Inverted state sums, inverted Habiro series, and indefinite theta functions}, 
    author={Sunghyuk Park},
    year={2021},
    eprint={2106.03942},
    archivePrefix={arXiv},
    primaryClass={math.GT},
    url={https://arxiv.org/abs/2106.03942}, 
    note={https://arxiv.org/abs/2106.03942}, 
}

@book {Park-thesis,
    AUTHOR = {Park, Sunghyuk},
     TITLE = {3-manifolds, $q$-series, and topological strings},
      NOTE = {Thesis (Ph.D.)--California Institute of Technology},
 PUBLISHER = {ProQuest LLC, Ann Arbor, MI},
      YEAR = {2022},
     PAGES = {106},
      ISBN = {979-8379-85382-2},
   MRCLASS = {99-05},
  MRNUMBER = {4639628},
       URL =
              {http://gateway.proquest.com.ezp-prod1.hul.harvard.edu/openurl?url_ver=Z39.88-2004&rft_val_fmt=info:ofi/fmt:kev:mtx:dissertation&res_dat=xri:pqm&rft_dat=xri:pqdiss:30553908},
}

@article {GaroufalidisZagier,
    AUTHOR = {Garoufalidis, Stavros and Zagier, Don},
     TITLE = {Knots and their related {$q$}-series},
   JOURNAL = {SIGMA Symmetry Integrability Geom. Methods Appl.},
  FJOURNAL = {SIGMA. Symmetry, Integrability and Geometry. Methods and
              Applications},
    VOLUME = {19},
      YEAR = {2023},
     PAGES = {Paper No. 082, 39},
      ISSN = {1815-0659},
   MRCLASS = {57K10 (57K14 57K16)},
  MRNUMBER = {4661564},
MRREVIEWER = {Wataru\ Yuasa},
       DOI = {10.3842/SIGMA.2023.082},
       URL = {https://doi-org.ezp-prod1.hul.harvard.edu/10.3842/SIGMA.2023.082},
}

@article {GunninghamJordanSafronov,
    AUTHOR = {Gunningham, Sam and Jordan, David and Safronov, Pavel},
     TITLE = {The finiteness conjecture for skein modules},
   JOURNAL = {Invent. Math.},
  FJOURNAL = {Inventiones Mathematicae},
    VOLUME = {232},
      YEAR = {2023},
    NUMBER = {1},
     PAGES = {301--363},
      ISSN = {0020-9910,1432-1297},
   MRCLASS = {57K31},
  MRNUMBER = {4557403},
MRREVIEWER = {Mohamed\ Elhamdadi},
       DOI = {10.1007/s00222-022-01167-0},
       URL = {https://doi.org/10.1007/s00222-022-01167-0},
}

@article {FrohmanGelca,
    AUTHOR = {Frohman, Charles and Gelca, R\u{a}zvan},
     TITLE = {Skein modules and the noncommutative torus},
   JOURNAL = {Trans. Amer. Math. Soc.},
  FJOURNAL = {Transactions of the American Mathematical Society},
    VOLUME = {352},
      YEAR = {2000},
    NUMBER = {10},
     PAGES = {4877--4888},
      ISSN = {0002-9947,1088-6850},
   MRCLASS = {57M25 (46L87 57M27 58B34)},
  MRNUMBER = {1675190},
MRREVIEWER = {Justin\ D.\ Roberts},
       DOI = {10.1090/S0002-9947-00-02512-5},
       URL = {https://doi-org.ezp-prod1.hul.harvard.edu/10.1090/S0002-9947-00-02512-5},
}

@misc{Solovyev,
    title="{Infinite-dimensional representations of $U_q(\mathfrak{sl}_2)$ and the shadow world: quantum $6j$-symbols for Verma modules}", 
    author={Dmitry Solovyev},
    year={2025},
    eprint={2410.18463},
    archivePrefix={arXiv},
    primaryClass={math.RT},
    url={https://arxiv.org/abs/2410.18463}, 
    note={https://arxiv.org/abs/2410.18463}
}

@article {AbouzaidManolescu,
    AUTHOR = {Abouzaid, Mohammed and Manolescu, Ciprian},
     TITLE = {A sheaf-theoretic model for {${\rm SL}(2,\Bbb C)$} {F}loer
              homology},
   JOURNAL = {J. Eur. Math. Soc. (JEMS)},
  FJOURNAL = {Journal of the European Mathematical Society (JEMS)},
    VOLUME = {22},
      YEAR = {2020},
    NUMBER = {11},
     PAGES = {3641--3695},
      ISSN = {1435-9855,1435-9863},
   MRCLASS = {57K31 (53D40 57K18 57R58)},
  MRNUMBER = {4167016},
       DOI = {10.4171/jems/994},
       URL = {https://doi-org.ezp-prod1.hul.harvard.edu/10.4171/jems/994},
}

@misc{OSSS,
    title={Quantum Invariants and Fiberedness}, 
    author={Paul Orland and Toby Saunders-A'Court and Lara San Martín Suárez and Josef Svoboda},
    year={2025},
    eprint={2512.23700},
    archivePrefix={arXiv},
    primaryClass={math.GT},
    url={https://arxiv.org/abs/2512.23700}, 
    note={https://arxiv.org/abs/2512.23700}
}

@article {KapustinWitten,
    AUTHOR = {Kapustin, Anton and Witten, Edward},
     TITLE = {Electric-magnetic duality and the geometric {L}anglands
              program},
   JOURNAL = {Commun. Number Theory Phys.},
  FJOURNAL = {Communications in Number Theory and Physics},
    VOLUME = {1},
      YEAR = {2007},
    NUMBER = {1},
     PAGES = {1--236},
      ISSN = {1931-4523,1931-4531},
   MRCLASS = {14D21 (22E46 32G13 81T45 81T60)},
  MRNUMBER = {2306566},
MRREVIEWER = {Siye\ Wu},
       DOI = {10.4310/CNTP.2007.v1.n1.a1},
       URL = {https://doi-org.ezp-prod1.hul.harvard.edu/10.4310/CNTP.2007.v1.n1.a1},
}

@misc{GuicardiJagadale,
    title="{$\hat{Z}$-TQFT, Surgery Formulas, and New Algebras}", 
    author={Pedro Guicardi and Mrunmay Jagadale},
    year={2025},
    eprint={2509.14311},
    archivePrefix={arXiv},
    primaryClass={hep-th},
    url={https://arxiv.org/abs/2509.14311}, 
    note={https://arxiv.org/abs/2509.14311}
}

@article {DKS_Seifert,
    AUTHOR = {Detcherry, Renaud and Kalfagianni, Efstratia and Sikora, Adam},
     TITLE = {Skein modules and character varieties of {S}eifert manifolds},
   JOURNAL = {Trans. Amer. Math. Soc. Ser. B},
  FJOURNAL = {Transactions of the American Mathematical Society. Series B},
    VOLUME = {12},
      YEAR = {2025},
     PAGES = {1264--1288},
      ISSN = {2330-0000},
   MRCLASS = {57K31},
  MRNUMBER = {4996369},
       DOI = {10.1090/btran/239},
       URL = {https://doi.org/10.1090/btran/239},
}

@article {Park_higher-rank,
    AUTHOR = {Park, Sunghyuk},
     TITLE = {Higher rank {$\hat{Z}$} and {$F_K$}},
   JOURNAL = {SIGMA Symmetry Integrability Geom. Methods Appl.},
  FJOURNAL = {SIGMA. Symmetry, Integrability and Geometry. Methods and
              Applications},
    VOLUME = {16},
      YEAR = {2020},
     PAGES = {Paper No. 044, 17},
      ISSN = {1815-0659},
   MRCLASS = {81R50 (57K16 57K31)},
  MRNUMBER = {4101858},
       DOI = {10.3842/SIGMA.2020.044},
       URL = {https://doi-org.ezp-prod1.hul.harvard.edu/10.3842/SIGMA.2020.044},
}

\end{document}